\def\today{\number\day\space\ifcase\month\or   January\or February\or
   March\or April\or May\or June\or   July\or August\or September\or
   October\or November\or December\fi\   \number\year}
\theoremstyle{definition}
\newtheorem{lma}{Lemma}[section]
\newaliascnt{thmCt}{lma}
\newtheorem{thm}[thmCt]{Theorem}
\newaliascnt{corCt}{lma}
\newtheorem{cor}[corCt]{Corollary}
\newaliascnt{cnjCt}{lma}
\newaliascnt{propCt}{lma}
\newtheorem{prop}[propCt]{Proposition}
\newtheorem*{thm*}{Theorem}
\newtheorem*{cor*}{Corollary}
\newtheorem*{prop*}{Proposition}
\newcounter{theoremintro}
\newtheorem{defintro}[theoremintro]{Definition}
\newtheorem{thmintro}[theoremintro]{Theorem}
\newtheorem{cnjintro}[theoremintro]{Conjecture}
\newaliascnt{pgrCt}{lma}
\newaliascnt{dfCt}{lma}
\newtheorem{df}[dfCt]{Definition}
\newaliascnt{remCt}{lma}
\newtheorem{rem}[remCt]{Remark}
\newaliascnt{remsCt}{lma}
\newaliascnt{egCt}{lma}
\newtheorem{eg}[egCt]{Example}
\newaliascnt{egsCt}{lma}
\newaliascnt{qstCt}{lma}
\newaliascnt{pbmCt}{lma}
\newaliascnt{notaCt}{lma}
\newcommand{\beq}{\begin{equation}}
\newcommand{\eeq}{\end{equation}}
\newcommand{\beqa}{\begin{eqnarray*}}
\newcommand{\eeqa}{\end{eqnarray*}}
\newcommand{\bal}{\begin{align*}}
\newcommand{\eal}{\end{align*}}
\newcommand{\bi}{\begin{itemize}}
\newcommand{\ei}{\end{itemize}}
\newcommand{\be}{\begin{enumerate}}
\newcommand{\ee}{\end{enumerate}}
\newcommand{\acton}{\curvearrowright}
\newcommand{\ep}{\varepsilon}
\newcommand{\del}{\partial}
\newcommand{\Z}{{\mathbb{Z}}}
\newcommand{\C}{{\mathbb{C}}}
\newcommand{\N}{{\mathbb{N}}}
\newcommand{\Prob}{{\mathrm{Prob}}}
\title{Classifiability of crossed products by nonamenable groups}
\thanks{The first named author was partially supported
by the Deutsche Forschungsgemeinschaft (DFG) through an eigene 
Stelle and by a Postdoctoral Research Fellowship
from the Humboldt Foundation. 
The second named author was supported by the project G085020N 
funded by the Research Foundation Flanders (FWO), by the generosity of Eric and Wendy Schmidt by recommendation of the Schmidt Futures program, by a Kreitman 
Foundation Fellowship, by an Israel Science Foundation grant 
no.~476/16, and by ERC Advanced Grant 834267 - AMAREC.  
All authors were partially supported by the DFG under Germany's 
Excellence Strategy EXC 2044 390685587, Mathematics M\"unster: 
Dynamics–Geometry–Structure, and through SFB 1442.
}
\author[Eusebio Gardella]{Eusebio Gardella}
\address{Eusebio Gardella
Department of Mathematical Sciences, Chalmers University of
Technology and University of Gothenburg, Gothenburg SE-412 96, Sweden.}
\email{gardella@chalmers.se}
\urladdr{www.math.chalmers.se/~gardella}
\author[Shirly Geffen]{Shirly Geffen}
\address{Shirly Geffen
Mathematisches Institut, Fachbereich Mathematik und Informatik der
Universit\"at M\"unster, Einsteinstrasse 62, 48149 M\"unster, Germany.}
\email{sgeffen@uni-muenster.de}
\urladdr{https://shirlygeffen.com/}
\author{Julian Kranz}
\address{Julian Kranz
Mathematisches Institut, Fachbereich Mathematik und Informatik der
Universit\"at M\"unster, Einsteinstrasse 62, 48149 M\"unster, Germany.}
\email{julian.kranz@uni-muenster.de}
\urladdr{https://www.uni-muenster.de/IVV5WS/WebHop/user/j\_kran05/}
\author{Petr Naryshkin}
\address{Petr Naryshkin
Mathematisches Institut, Fachbereich Mathematik und Informatik der
Universit\"at M\"unster, Einsteinstrasse 62, 48149 M\"unster, Germany.}
\email{pnaryshk@uni-muenster.de}
\urladdr{http://petrnaryshkin.wordpress.com}
\begin{document}
 
\begin{abstract}
We show that all amenable, minimal actions of a large class of 
nonamenable countable groups on compact metric spaces have dynamical comparison. This 
class includes all nonamenable hyperbolic groups, 
many HNN-extensions, nonamenable
Baumslag-Solitar groups, a large class of amalgamated free 
products, lattices in many Lie groups, $\widetilde{A}_2$-groups, 
as well as direct products of the 
above with arbitrary countable groups. As a consequence, crossed products 
by amenable, minimal and topologically free actions of such groups on 
compact metric spaces are Kirchberg algebras in the UCT class, 
and are therefore classified by $K$-theory. 
\end{abstract}

\maketitle
\vspace{-.6cm}
\tableofcontents
\vspace{-.9cm}


\renewcommand*{\thetheoremintro}{\Alph{theoremintro}}
\section{Introduction}

One of the most remarkable achievements in $C^*$-algebra 
theory in the last decade was the completion of the 
classification programme initiated by George Elliott over 30 years
ago. The outcome is the combination of the work of 
a large number of mathematicians over several decades, and can be
phrased as follows:

\begin{thm*} [Classification]
Simple, separable, unital, nuclear, $\mathcal Z$-stable 
$C^*$-algebras satisfying the Universal Coefficient Theorem (UCT) 
are classified by 
the Elliott invariant ($K$-theory and traces).
\end{thm*}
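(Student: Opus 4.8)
The plan is to exploit the fundamental dichotomy for simple, nuclear, $\mathcal Z$-stable \cas: such an algebra is either purely infinite or stably finite. I would treat these two regimes separately, and in each case classify via the Elliott intertwining argument, which reduces the isomorphism problem for objects to an \emph{existence} theorem (every morphism of Elliott invariants lifts to a $*$-homomorphism) together with a \emph{uniqueness} theorem (two $*$-homomorphisms inducing the same data on the invariant are approximately unitarily equivalent). Once existence and uniqueness are in place, an isomorphism of Elliott invariants is promoted to a $*$-isomorphism by a standard back-and-forth approximate intertwining.

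In the purely infinite case the algebras are Kirchberg algebras, and $\mathcal Z$-stability is automatic, since a simple, separable, nuclear, purely infinite \ca{} absorbs $\OI$. Here the trace data is vacuous and the Elliott invariant reduces to the pair of $K$-groups $(K_0,K_1)$ together with the position of the unit in $K_0$. I would invoke the Kirchberg--Phillips theorem: the existence and uniqueness statements are established through $KK$-theory, using Kirchberg's $\Ot$-embedding theorem and $\OI$-absorption to convert $KK$-classes into honest $*$-homomorphisms and to control them up to approximate unitary equivalence. The UCT is precisely what guarantees that the relevant $KK$-groups are computed by \kt, so that morphisms of the invariant correspond to $KK$-elements.

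The stably finite case is where the main work lies. First I would pass from $\mathcal Z$-stability to \emph{finite nuclear dimension} via the Toms--Winter equivalences; the direction I need, that Jiang--Su stability implies finite nuclear dimension for these algebras, is the deepest analytic input, resting on the structure of the tracial von Neumann completions and the uniform property $\Gamma$. With finite nuclear dimension and strict comparison in hand, I would prove the uniqueness theorem by a two-coloured argument: approximate unitary equivalence is first achieved at the level of the finite von Neumann algebras attached to the traces, and then transported back to the \ca{} using $\mathcal Z$-stability to absorb the colouring. The existence theorem is obtained by realizing prescribed $KK$-classes compatibly with the trace pairing, again using the UCT to identify $KK$ with \kt.

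The hardest step, I expect, is the uniqueness theorem in the stably finite setting: controlling $*$-homomorphisms up to approximate unitary equivalence requires simultaneously handling the $KK$-obstruction, the continuous trace data, and the passage between the \ca{} and its tracial von Neumann completions. This is exactly the point where finite nuclear dimension (equivalently $\mathcal Z$-stability) is indispensable, and where the bulk of the decades-long effort of the classification programme was concentrated.
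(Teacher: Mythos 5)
The paper does not prove this statement: it is quoted as background, with the proof attributed entirely to the literature (Kirchberg's dichotomy, Kirchberg--Phillips for the purely infinite case, and the combination of Elliott--Gong--Lin--Niu, Tikuisis--White--Winter and Castillejos--Evington--Tikuisis--White--Winter for the stably finite case). Your outline reproduces exactly that architecture --- dichotomy, then existence/uniqueness plus Elliott intertwining in each regime, with $KK$-theory and the UCT in the purely infinite case and $\mathcal Z$-stability/finite nuclear dimension and tracial completions in the stably finite case --- so it matches the approach the paper points to; it is of course a roadmap whose every step is itself a major theorem rather than a self-contained proof.
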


By Kirchberg's dichotomy, \cite[Theorem~4.1.10]{Ror_classification_2002},
a $C^*$-algebra satisfying the assumptions of the above theorem
(also called classifiable)
is either stably finite or purely infinite.
The currently available proof of the classification 
theorem considers the stably finite and purely infinite cases
separately: while the purely infinite case was settled over 
20 years ago by Kirchberg and Phillips (see
\cite{Phi_classification_2000}), the stably finite case was 
only settled in the last 5 years as a combination of 
\cite{EGLN_classification_2016, TikWhiWin_quasidiagonality_2017,
CETWW_nuclear_2021}.
We refer the reader to Winter's ICM address 
\cite{Win_structure_2018} for a recent survey and further
references on the topic. 

With such a powerful classification theorem at our disposal, 
it becomes an imperative task to identify interesting classes
of $C^*$-algebras to which it can be applied. One
of the most natural families of $C^*$-algebras arises from
topological dynamics via the crossed product construction.
In recent years, a lot of work has been done to 
establish dynamical criteria for an action $G\acton X$ 
of a countable group on a compact metric space 
that ensure that the associated
crossed product $C(X)\rtimes G$ satisfies the assumptions 
of the classification theorem. 
Unitality and separability of $C(X)\rtimes G$ are automatic,
while
nuclearity of $C(X)\rtimes G$ is equivalent to amenability
of $G\acton X$ (see \autoref{df:AmenAction}). Moreover, 
if $G\acton X$ is amenable, then $C(X)\rtimes G$ automatically
satisfies the UCT by \cite[Theorem~10.9]{Tu_conjecture_1999}, 
and it is simple if and only if
$G\acton X$ is minimal and topologically free by
\cite[Theorem~2]{ArcSpi_topologically_1994}. In particular, amenability, 
minimality, and topological freeness are \emph{necessary} conditions
for classifiability of $C(X)\rtimes G$, and
it remains to decide when $C(X)\rtimes G$ is
$\mathcal{Z}$-stable. 
Kirchberg's dichotomy takes a particularly nice form in this setting,
since a nuclear crossed product $C(X)\rtimes G$ is stably finite if
and only if $G$ is amenable. (This is a combination of celebrated 
results in \cite{Cun_dimension_1978, BlaHan_dimension_1982, 
Haa_quasitraces_2014}, as well as
\autoref{lem:noinvariantmeasures}).
Not surprisingly,
the techniques used to establish $\mathcal{Z}$-stability of 
$C(X)\rtimes G$ are quite different depending on whether the 
group $G$ is amenable or not. 


On the amenable side, one of the first results in this 
direction is due to Toms and Winter \cite{TomWin_minimal_2009},
who showed that $C(X)\rtimes\Z$ is $\mathcal{Z}$-stable
whenever $\Z\curvearrowright X$ is free and minimal, and $\dim(X)<\infty$. 
The efforts to extend this result to
more general groups led Kerr to introduce 
the notion of \emph{almost finiteness} for topological 
actions of amenable groups in \cite{Ker_dimension_2020}, and prove that 
crossed products by free, minimal and almost finite actions 
are $\mathcal Z$-stable. 
Almost finiteness has been
verified in a number of cases of interest \cite{KerSza_almost_2020}, 
and the most
recent result in this setting is by 
Kerr and Naryshkin \cite{KerNar_elementary_2021}, 
who proved that free actions of elementary amenable groups on 
finite-dimensional spaces are automatically almost finite.
It is not possible to completely remove the finite-dimensionality 
assumption in these:
Giol and Kerr constructed in \cite{GioKer_subshifts_2010} 
a free,
minimal homeomorphism of an infinite dimensional space
$X$ such that $C(X)\rtimes\Z$ is not $\mathcal{Z}$-stable.
The dividing line regarding 
$\mathcal{Z}$-stability for crossed products by amenable 
groups is expected to be mean dimension zero (or the conjecturally
equivalent notion of the small boundary property), which is
weaker than finite-dimensionality of the space. 
In this direction, 
Elliott and Niu showed in \cite{EllNiu_minimal_2017} that 
$C(X)\rtimes\Z$ is $\mathcal{Z}$-stable whenever
$\Z\acton X$ is free and minimal, and has mean dimension zero; 
this was generalized
by Niu \cite{Niu_comparison_2019} to $\Z^d$-actions. 
It has been conjectured
by Phillips and Toms that the converse should also be true, 
and there have been some partial
results in this direction; see 
\cite{HirPhi_radius_2020}. 

Much less seems to be known in the nonamenable setting, 
although certain  
classes of actions have been successfully 
studied from this point of view.
For example, Laca and Spielberg proved in 
\cite{LacSpi_purely_1996} that crossed products by
minimal, topologically free, \emph{strong boundary} actions 
are purely infinite. 
As a consequence, for actions as above which are in 
addition amenable, 
such crossed products are nuclear and thus
$\mathcal{O}_\infty$-stable by Kirchberg's absorption theorem \cite[Theorem 3.15]{KirPhi_embedding_2000},
so they are in particular $\mathcal Z$-stable.
Simiar results were obtained independently
by Anantharaman-Delaroche in \cite{AD_purely_1997}.
In \cite{JolRob_simple_2000}, Jolissaint and Robertson 
proved analogous results for the larger class of 
\emph{$n$-filling actions}. 

A property that is key in the study of dynamical
systems is Kerr's notion of dynamical comparison. 
Given nonempty open sets $U,V\subseteq X$, we write $U\prec V$ if 
every closed subset of
$U$ admits a finite open cover whose elements can be transported via the 
group action to pairwise disjoint subsets of $V$ (see \autoref{df:DynSubeq}).
A system $G\acton X$ has dynamical comparison if $U\prec V$ whenever
$\mu(U)<\mu(V)$ for all $G$-invariant probability measures $\mu$.
Establishing dynamical comparison is a powerful tool for proving 
$\mathcal{Z}$-stability of crossed products, 
both in the amenable and in the nonamenable settings. 
For amenable groups, the small boundary property
implies almost finiteness (and thus $\mathcal{Z}$-stability) in the presence of
dynamical comparison; see \cite[Theorem~A]{KerSza_almost_2020}. 
As it turns out, dynamical comparison has been verified in many
interesting cases: for free actions of groups with subexponential growth on 
Cantor spaces in \cite{DowZha_comparison_2020},
and for arbitrary minimal actions of groups 
with polynomial growth in \cite{Nar_polynomial_2021}. The latter result 
gives a large class of groups for which
the small boundary property implies $\mathcal{Z}$-stability.
For amenable, minimal, topologically free actions of nonamenable groups,
Ma proved that comparison implies pure infiniteness of the crossed product
(see \cite{Ma_comparison_2020}, and see \autoref{thm:Ma} for
a simple proof). Not surprisingly, establishing dynamical 
comparison is often very challenging.



In this work, we prove that all amenable and minimal actions of a large class
of nonamenable groups automatically satisfy dynamical comparison.
As a consequence, for actions which are additionally topologically free, the 
crossed products are purely infinite (and thus satisfy the assumptions of the classification theorem).
The following is the main definition of this work. 

\begin{defintro}\label{df:introParTow}
Given $n\in\N$, we say that a countable group $G$ admits \emph{$n$-paradoxical towers}, 
if for every finite subset $D\subseteq G$ there are 
$A_1,\ldots,A_n\subseteq G$ and $g_1,\ldots,g_n\in G$ such that:
\begin{enumerate}
\item The sets $dA_i$, for $d\in D$ and $i=1,\ldots,n$, are 
pairwise disjoint.
\item $G=\bigcup_{i=1}^{n}g_iA_i$.
\end{enumerate}
We say that $G$ admits \emph{paradoxical towers} if it admits $n$-paradoxical
towers for some $n\in \N$.
\end{defintro}

It is easy to see that a group admitting paradoxical towers is 
necessarily nonamenable. Elementary methods allow one to show 
that the free group $\mathbb{F}_n$ admits paradoxical towers; 
see \autoref{prop:F2}, and see Theorem~\ref{thm:introExamples} for more examples.
There exist nonamenable groups which do not admit paradoxical towers, such as
$\mathbb{F}_2\times \Z$; see \autoref{eg:F2Z}. 

We show that \emph{every} amenable and minimal action of 
a group with paradoxical towers has dynamical comparison.
In fact, our methods allow 
us to deal with products of such groups with arbitrary groups;
see \autoref{thm:groupstospaces}. 

\begin{thmintro}\label{thm:introCrossedProd}
Let $H$ be a countable group with paradoxical towers, let $K$
be an arbitrary countable group, and set $G=H\times K$.
Then every amenable, minimal action 
$G\acton X$ on a compact metrizable space has dynamical comparison. 
If $G\acton X$ is moreover topologically free, then the crossed product
$C(X)\rtimes G$ is a Kirchberg algebra satisfying the UCT.
\end{thmintro}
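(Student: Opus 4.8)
The plan is to prove \autoref{thm:introCrossedProd} in two stages, decoupling the dynamical statement from the $C^*$-algebraic consequences. The main content is the claim that every amenable, minimal action of $G = H \times K$ has dynamical comparison; once this is established, the Kirchberg-algebra conclusion follows by assembling known results. So first I would focus on reducing dynamical comparison for $G \acton X$ to the paradoxical-towers property of the factor $H$. The natural strategy is to fix nonempty open sets $U, V \subseteq X$ with $\mu(U) < \mu(V)$ for all $G$-invariant probability measures $\mu$, and to produce the desired cover-and-transport data witnessing $U \prec V$. Since $H$ admits $n$-paradoxical towers, for any finite $D \subseteq H$ I obtain subsets $A_1, \dots, A_n \subseteq H$ and elements $g_1, \dots, g_n \in H$ satisfying the disjointness and covering conditions of \autoref{df:introParTow}. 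The idea is to translate these combinatorial towers into dynamical towers over $X$: the disjointness of the sets $dA_i$ should let me push disjoint pieces of a closed subset of $U$ into $V$ using the $H$-action, while the covering condition $H = \bigcup g_i A_i$ guarantees that the pieces actually exhaust the relevant part of $U$.

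The key technical step, and the one I expect to be the main obstacle, is bridging the purely group-theoretic paradoxicality of $H$ with the measure-theoretic hypothesis $\mu(U) < \mu(V)$, all while only controlling the $H$-direction and leaving $K$ arbitrary. The presence of the arbitrary factor $K$ means I cannot hope to use paradoxicality of $G$ itself (indeed $G$ need not admit paradoxical towers), so I must exploit the product structure: I would try to run the paradoxical-towers argument along $H$-orbits or $H$-subactions while treating the $K$-coordinate as inert. I anticipate that amenability of the action $G \acton X$ enters precisely here — it should furnish the approximate invariance or Følner-type data needed to control how the $H$-translates of open sets interact with the invariant measures, converting the combinatorial covering into a genuine dynamical subequivalence. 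The subtlety is that amenability of the action (rather than the group) gives measure-wise tightness, and I will need to combine this with the paradoxical towers to ensure that the transported pieces land inside $V$ without overlap up to the measure deficit.

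Once dynamical comparison is in hand, the second stage is comparatively routine and amounts to invoking the structural results surveyed in the introduction. Assume now that $G \acton X$ is in addition topologically free. Since $G = H \times K$ is nonamenable (because $H$ is, as any group with paradoxical towers is nonamenable), and the action is amenable and minimal, by the combination of \cite{Cun_dimension_1978, BlaHan_dimension_1982, Haa_quasitraces_2014} together with \autoref{lem:noinvariantmeasures} there are no $G$-invariant probability measures on $X$. Consequently, dynamical comparison reduces to the statement that $U \prec V$ for all nonempty open $U, V$, which is exactly the hypothesis of Ma's theorem \autoref{thm:Ma}; applying it shows that $C(X) \rtimes G$ is purely infinite. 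Meanwhile, amenability of the action gives nuclearity of $C(X) \rtimes G$ and, via \cite[Theorem~10.9]{Tu_conjecture_1999}, the UCT, while minimality together with topological freeness yields simplicity by \cite[Theorem~2]{ArcSpi_topologically_1994}. A simple, separable, nuclear, purely infinite $C^*$-algebra is a Kirchberg algebra, completing the proof.
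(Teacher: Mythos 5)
Your second stage is correct and matches the paper: with no $G$-invariant measures (by \autoref{lem:noinvariantmeasures}, since a group with paradoxical towers is nonamenable), dynamical comparison plus topological freeness and minimality feed into \autoref{thm:Ma}, and nuclearity, the UCT, and simplicity come from amenability of the action and \cite{ArcSpi_topologically_1994}. That part needs no changes.

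The first stage, however, is a statement of intent rather than a proof, and the two points you flag as ``the main obstacle'' are precisely where the substance lies; neither is resolved in your proposal. First, the factor $K$ cannot be treated as inert. The paper's device is to partition $K = B_1 \sqcup \dots \sqcup B_m$ (with $m = |E^4|$ for a suitable finite $E \subseteq K$) using a greedy coloring of a Cayley graph, so that the translates $eB_j$ are pairwise disjoint for each fixed $j$ (\autoref{lem:Cayley}); this is combined with $m$ \emph{jointly} $D$-free copies of the paradoxical towers in $H$ (\autoref{lma:MoreTowers}) to produce product sets $C_{i,j} = A_i^{(j)} \times B_j$ in $G$ whose $F^2$-translates are pairwise disjoint and whose translates cover $G$. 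Without some such construction you have no towers in $G$ at all, only in $H$. Second, you do not say how amenability is actually used. In the paper, a continuous almost-invariant map $\mu \colon X \to \Prob(G)$ converts the group-level sets $C_{i,j}$ into open sets $V_{i,j} = \{x \colon \mu(x)(C_{i,j}) > \frac{1}{nm+1} + \delta\}$, and one proves $X \prec (V_{i,j})_{i,j} \prec_n U$ — the first subequivalence from the covering condition, the second from the disjointness condition together with a counting lemma (\autoref{lma:Petr}, adapted from \cite{Nar_polynomial_2021}) and the fact that minimality forces $U^{-\varepsilon}$ to be visited often by a fixed finite set. Finally, this only yields $n$-comparison, so you also need the reduction from $r$-comparison to comparison in the measure-free setting (\autoref{lma:minimalComparison}), which your outline omits. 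As written, the proposal identifies the right ingredients (paradoxical towers in $H$, amenability of the action, absence of invariant measures) but supplies no mechanism connecting them, so the central claim remains unproved.
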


By \cite[Theorem~6.11]{RorSie_purely_2012}, every nonamenable exact group 
admits a large family of minimal, amenable, topologically free actions on compact metrizable spaces. 

The above result shows an unexpected phenomenon in the nonamenable
setting: classifiability of $C(X)\rtimes G$ does not require 
finite dimensionality of $X$ or any version of mean dimension zero 
for actions of nonamenable groups. There is thus a genuine difference
between the amenable and the nonamenable cases.
For a nonamenable group $G$ not covered by Theorem~\ref{thm:introCrossedProd},
we do not know if a simple, nuclear crossed product of the form 
$C(X)\rtimes G$ can be finite, although we strongly suspect that this
is not the case\footnote{\autoref{lem:noinvariantmeasures} implies that such
crossed products are never stably finite, and Conjecture~\ref{cnj:MinAmenActComp} below 
predicts that such crossed products are always purely infinite.}. If $G$ 
contains $\mathbb{F}_2$,
we show in \autoref{prop:PropInf} that a simple, nuclear crossed product 
of the form $C(X)\rtimes G$
is always properly infinite.

We complement Theorem~\ref{thm:introCrossedProd} by proving 
that large classes of nonamenable groups admit paradoxical towers; 
see \autoref{sec:examples}. We summarize some of the results:

\begin{thmintro}\label{thm:introExamples}
The following classes of groups admit paradoxical
towers:
\begin{enumerate}
\item Acylindrically hyperbolic groups; 
see~\autoref{eg:acylindrical}. In particular, all nonamenable
hyperbolic groups and thus all nonabelian free grups.
\item Highly transitive faithful non-ascending HNN-extensions; see \autoref{prop:HNN}. In
particular, Baumslag-Solitar groups $BS(m,n)$ with $|m|,|n|>1$ and $|m|\neq |n|$; 
see \autoref{eg:BS}.
\item All free products $G\ast H$ of nontrivial groups with $|H|>2$; 
see \autoref{eg:AmFreeProd} for a larger class.
\item Lattices in a real connected semisimple Lie groups without 
compact factors and with finite center (such as 
$\operatorname{SL}_n(\Z)$ for $n\geq 3$); see \autoref{eg:lattice}.
\item $\widetilde{A}_2$-groups; see \autoref{eg:building}.
\item Discrete subgroups of isometries of a visibility manifolds 
with finite co\-vo\-lume; see \autoref{eg:manifold}.
\end{enumerate}
\end{thmintro}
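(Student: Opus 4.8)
The plan is to isolate one geometric criterion that forces paradoxical towers, verify it in each class, and treat the free group as the model. First I would reformulate \autoref{df:introParTow}. Writing $S=D^{-1}D$, condition~(1) is equivalent to asking that the sets $A_1,\dots,A_n$ be jointly left-$S$-separated: $A_i\cap sA_i=\emptyset$ for every $s\in S\setminus\{e\}$, and $A_i\cap sA_j=\emptyset$ for all $s\in S$ when $i\neq j$; condition~(2) then asks that $n$ left translates of this separated family already exhaust $G$. The whole difficulty is the tension between these. Separation forces the bases to be sparse, since they must accommodate the disjoint levels $\{dA_i\}_{d\in D}$ whose number $|D|$ is unbounded, whereas coverage forces them to be thick, with $n$ bounded independently of $D$. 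Exponential growth is what reconciles the two, and an invariant mean rules it out, recovering the observation that these groups are nonamenable and explaining why an amenable direction destroys the property, as in $\mathbb{F}_2\times\Z$ (\autoref{eg:F2Z}).

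The model is the free group, where the classical paradoxical doubling $\mathbb{F}_2=A\sqcup aA'$ already yields coverage by boundedly many translates; to also achieve $S$-separation one spaces the pieces by inserting buffer words (powers of a fixed generator) longer than every element of $D^{-1}D$, so that a short left translation cannot carry one piece onto another. This is \autoref{prop:F2}. To pass from a free subgroup $F=\langle a,b\rangle\le G$ to $G$ itself I would fix a transversal $T$ for the right cosets $F\backslash G$ and spread the free-group bases across all cosets, setting $A_i=\bigcup_{t\in T}A_i^{(0)}t$ with $A_i^{(0)}\subseteq F$; coverage then transfers coset by coset using the $g_i\in F$. The obstruction is separation \emph{across} cosets: an arbitrary $d\in D$ need not preserve the coset decomposition, so $dFt$ may overlap $Ft'$. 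Controlling this overlap is exactly the role of hyperbolic embeddedness: when $F\hookrightarrow_h G$, the coset family is geometrically separated and $F$ is almost malnormal, so only boundedly many cross-coset coincidences occur and they can be absorbed into the buffer. I therefore expect the core lemma to read: if $G$ contains a hyperbolically embedded free subgroup, then $G$ admits paradoxical towers.

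With this lemma the classes split into two groups. The acylindrically hyperbolic case (\autoref{eg:acylindrical}), covering all nonamenable hyperbolic and hence all nonabelian free groups, follows at once, since such groups contain hyperbolically embedded free subgroups by Dahmani--Guirardel--Osin. Non-ascending HNN-extensions and free products (\autoref{prop:HNN}, \autoref{eg:BS}, \autoref{eg:AmFreeProd}) act on their Bass--Serre trees with the required hyperbolic embedding, high transitivity supplying faithful ping-pong, and the Baumslag--Solitar groups $BS(m,n)$ with $|m|\neq|n|$ fall in here. Discrete isometry groups of finite-covolume visibility manifolds (\autoref{eg:manifold}) are rank-one in flavour, their contracting geodesics again producing hyperbolically embedded free subgroups. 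The genuinely different cases are the higher-rank lattices (\autoref{eg:lattice}) and the $\widetilde{A}_2$-groups (\autoref{eg:building}), which are \emph{not} acylindrically hyperbolic; there I would replace the hyperbolic boundary by the Furstenberg boundary $G/P$ (respectively the boundary of the building), which carries a minimal, strongly proximal $G$-action, and run the ping-pong with a Schottky free subgroup generated by regular, proximal elements. The geometric separation that substituted for hyperbolic embeddedness would then come from the Morse, or contracting, behaviour of regular directions in the symmetric space or building.

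The main obstacle throughout is precisely this cross-coset separation, that is, upgrading a free or Schottky subgroup with good internal doubling to one whose translates of a separated base remain separated after multiplication by an arbitrary finite $D\subseteq G$. In the hyperbolic cases it is clean, because hyperbolic embeddedness packages exactly the needed almost malnormality and metric separation; in the higher-rank cases it is delicate, requiring one to quantify the contraction of regular elements uniformly and to keep the number $n$ of towers bounded as $D$ grows. I expect the bulk of the technical work, and all the case-by-case geometry, to concentrate there, while the passage from the separated-doubling data to the statement of \autoref{df:introParTow} is formal.
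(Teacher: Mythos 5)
Your proposed core lemma (a hyperbolically embedded free subgroup forces paradoxical towers) is true as a statement, but the proof you sketch for it has a genuine gap, and it is not the route the paper takes. The problem is the step where you set $A_i=\bigcup_{t\in T}A_i^{(0)}t$ for a transversal $T$ of $F\backslash G$ and claim that almost malnormality lets you ``absorb'' the cross-coset coincidences into a buffer inside $F$. Left multiplication by $d\in D\setminus F$ does not permute the right cosets of $F$: already in $G=F\ast\langle d\rangle$ (where $F$ \emph{is} hyperbolically embedded) the double coset $FdF$ contains the pairwise distinct right cosets $Fdf'$ for all $f'\in F$, so a single translate $dFt$ meets infinitely many cosets $Ft'$. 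Consequently $dA_i\cap A_j$ is a union of infinitely many cross-coset terms whose vanishing is governed by how $d$ interleaves with the transversal, not by any choice of buffers or spacing made inside $F$. Almost malnormality controls $F\cap gFg^{-1}$, not the right-coset decomposition of $FdF$, so I do not see how to close this step; this is exactly why the analogous coset-spreading arguments in the paper (\autoref{rem:LocalParadoxicalTowers}, \autoref{prop:PropInf}) are only ever applied with $D$ contained in the subgroup.

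The paper's actual unifying criterion is different and avoids cosets entirely: \autoref{thm:BdryParadTowers} shows that a topologically free $n$-filling action $G\acton Z$ on a Hausdorff Baire space yields $n$-paradoxical towers, by choosing (via Baire) a point $z_1$ with trivial stabilizer, points $z_1,\dots,z_n$ in its orbit with pairwise disjoint translated neighborhoods $dU_i$, and pulling back along the injective orbit map: $A_i=\{g\in G\colon g\cdot z_1\in U_i\}$. Disjointness of the $dA_i$ and the covering $\bigcup_i g_iA_i=G$ then follow in one line from disjointness of the $dU_i$ and from $\bigcup_i g_iU_i=Z$. All six classes are then handled by exhibiting such a boundary action (Gromov boundary, Bass--Serre tree boundary, Furstenberg boundary, building boundary, visibility boundary), with \autoref{lem:finiteNormalSubgroup} used to quotient out the maximal finite normal subgroup when topological freeness of the boundary action would otherwise fail --- a reduction your outline also omits for acylindrically hyperbolic groups and for lattices with finite center. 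Your remarks on the higher-rank cases do point at the right boundaries, but the mechanism that converts a boundary action into towers is the pullback above, not a Schottky subgroup plus coset spreading; if you rewrite your argument around that pullback, the case-by-case geometry reduces to citing the known filling and topological-freeness results.
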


After these results appeared on the arXiv, further examples of groups with paradoxical towers and purely infinite crossed products were obtained by Ma and Wang 
in~\cite{MaWang}. Moreover, some of the techniques developed here have also been
successfully used in the study of actions on simple C*-algebras; 
see \cite{GGKNV_tracial_2022}.

Based on the evidence provided in this work, we expect
that the conclusion of Theorem~\ref{thm:introCrossedProd} should hold for 
arbitrary nonamenable groups:

\begin{cnjintro}\label{cnj:MinAmenActComp}
Let $G$ be a countable nonamenable group and let $X$ be a compact
metrizable space. Then every amenable, minimal action $G\acton X$ 
has dynamical comparison. 
\end{cnjintro}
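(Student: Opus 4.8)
The plan is to treat Conjecture~\ref{cnj:MinAmenActComp} as the strengthening of Theorem~\ref{thm:introCrossedProd} in which the hypothesis ``$G=H\times K$ with $H$ admitting paradoxical towers'' is replaced by bare nonamenability. The first step is to reduce dynamical comparison to its purely infinite form. Since the action is amenable and $G$ is nonamenable, \autoref{lem:noinvariantmeasures} shows that there are no $G$-invariant probability measures on $X$. Consequently the comparison hypothesis ``$\mu(U)<\mu(V)$ for every invariant $\mu$'' holds vacuously for every pair of nonempty open sets, so dynamical comparison is equivalent to the assertion
\[
U\prec V\qquad\text{for all nonempty open }U,V\S X.
\]
In other words, it suffices to prove that every amenable, minimal action of a nonamenable group is purely infinite in this dynamical sense.

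Second, I would assemble the two tools that drive the proof of Theorem~\ref{thm:introCrossedProd}. Minimality together with compactness gives, for any nonempty open $V$, finitely many $s_1,\dots,s_m\in G$ with $X=\bigcup_j s_jV$; in particular any compact set such as $\ov U$ is covered by finitely many translates of $V$. Amenability of the action supplies the complementary ingredient: for every finite $S\S G$ and every $\ep>0$ there is a continuous, almost $S$-equivariant system of finitely supported probability measures $x\mapsto \xi_x\in\Prob(G)$. The strategy behind Theorem~\ref{thm:introCrossedProd} combines such an almost-invariant system with the combinatorial data of \autoref{df:introParTow}: the (finite) union of the supports of the $\xi_x$ prescribes the finite set $D\S G$, the sets $A_1,\dots,A_n$ and elements $g_1,\dots,g_n$ are used to build an open cover of $\ov U$, and the disjointness of the translates $dA_i$ is exactly what guarantees that the transported pieces land in \emph{pairwise disjoint} subsets of $V$. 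The entire point of the conjecture is to run this transport argument for an arbitrary nonamenable $G$.

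Third comes the crux. By Tarski's theorem, nonamenability of $G$ is equivalent to the existence of a paradoxical decomposition of $G$, so every group in the conjecture already carries \emph{some} paradoxical structure; what a general nonamenable group need not provide is the uniform, $D$-separated ``tower'' disjointness built into \autoref{df:introParTow}. I would therefore try to isolate the weakest combinatorial property that (i) is a consequence of nonamenability alone and (ii) still powers the transport step above, and then prove that this property suffices. A plausible substitute is an \emph{approximate} paradoxicality: rather than demanding that the sets $dA_i$ be genuinely disjoint, one asks only that, after averaging against the almost-invariant system $\xi_x$, their overlaps contribute a total weight below $\ep$, and then absorbs the resulting error by spreading $V$ over a few additional translates supplied by minimality.

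The main obstacle is precisely bridging this gap: controlling the multiplicities with which the transported pieces overlap, without the genuine disjointness that paradoxical towers guarantee a priori. This is exactly the feature that forced the introduction of paradoxical towers in place of plain Tarski paradoxicality, and I expect that handling it for arbitrary nonamenable groups --- likely by transporting the paradoxical decomposition directly into the space and iteratively absorbing $\ov U$ into $V$ in finitely many steps while keeping the covering multiplicities uniformly bounded --- is the genuine difficulty, and the reason the statement is currently only a conjecture rather than a theorem.
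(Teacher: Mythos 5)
The statement you are trying to prove is Conjecture~\ref{cnj:MinAmenActComp}, which the paper explicitly leaves open: there is no proof of it anywhere in the text, and the authors only establish the special case $G=H\times K$ with $H$ admitting paradoxical towers (Theorem~\ref{thm:groupstospaces}). Your proposal is therefore not comparable to a proof in the paper, and more importantly it is not a proof at all. The first two steps are fine and faithfully reproduce what the paper already does: the reduction via \autoref{lem:noinvariantmeasures} and \autoref{lma:minimalComparison} to showing $X\prec V$ for every nonempty open $V$, and the description of how minimality, the almost-invariant maps $\mu\colon X\to\Prob(G)$, and the $D$-disjointness of the towers $dA_i$ combine in the proof of \autoref{thm:groupstospaces}. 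But the third step is where the entire mathematical content would have to live, and there you only gesture at a hoped-for ``approximate paradoxicality'': you neither define it precisely, nor derive it from nonamenability, nor show it suffices to run the transport argument. Tarski's theorem gives sets $A_i$ with $G=\bigsqcup_i g_iA_i$, but gives no control whatsoever on the overlaps of the translates $dA_i$ for $d\in D$, and it is exactly this control that makes Claim~3 in the proof of \autoref{thm:groupstospaces} work (the bound $\sum_{i,j}|\{f\in F^2\colon f\cdot x\in V_{i,j}\}|\leq nm$ comes directly from the pairwise disjointness of the sets $fC_{i,j}$). Averaging overlaps against $\xi_x$ does not obviously help: the overlaps could carry essentially all of the mass of $\mu(x)$, and nothing in nonamenability alone rules this out. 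The paper's \autoref{eg:F2Z} shows that genuine paradoxical towers can fail for nonamenable groups, so some new idea really is required, and your sketch does not supply one.

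You are candid that the main obstacle remains unresolved, which is to your credit, but the conclusion to draw is that your text is a research plan rather than a proof. If you want to make progress on the conjecture, the concrete open problem your sketch points to is: formulate a disjointness-free combinatorial consequence of nonamenability (for instance, a uniform bound on the covering multiplicity of the family $\{dA_i\}_{d\in D, i}$ in terms of $|D|$ and $n$, valid for every finite $D$) and verify that such a bound can replace condition~(a.1)/(i) in the proof of \autoref{thm:groupstospaces} at the cost of increasing the comparison constant $r$, which \autoref{lma:minimalComparison} would then absorb. Until such a statement is both proved from nonamenability and shown to suffice, the conjecture remains open.
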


A positive solution to the above conjecture would imply that
crossed products by amenable, minimal and topologically free 
actions of nonamenable groups are \emph{always} classifiable. 
Our conjecture would also imply a strengthening of Kirchberg's
dichotomy for crossed products: if $C(X)\rtimes G$ is simple
and nuclear, then it is either stably finite (if and only if
$G$ is amenable) or purely infinite (if and only if $G$ is
nonamenable), \emph{regardless} of whether it is 
$\mathcal{Z}$-stable or not.

\vspace{.25cm}
\textbf{Acknowledgements:}
We would like to thank Sahana H. Balasubramanya, Yair Hartman, David Kerr, 
Mario Klisse, Xin Ma, Shintaro Nishika\-wa, Tron Omland, Mikael 
R\o rdam, Hannes Thiel, Federico Vigolo, and 
Stuart White for helpful comments and discussions. We would also like to thank the anonymous referee for helpful suggestions.

\section{Amenable actions and dynamical subequivalence}\label{sec:preliminaries}
%
%

In this section, we collect a number of elementary definitions and results 
that will
be needed in the rest of the work. 
All countable groups will be endowed with the discrete topology.
All measures on locally compact spaces are assumed to be regular Borel measures. 
If $G$ is a discrete group, we denote by
$\mathrm{Prob}(G)\subseteq \ell^1(G)$ 
the set of all probability measures on it.
If $\mu\in \mathrm{Prob}(G)$
and $g\in G$, we denote by $g\cdot\mu$ the probability measure given by
$(g\cdot\mu)(E)=\mu(g^{-1}E)$ for $g\in G$ and $E\subseteq G$.

The following definition, introduced by Anantharaman-Delaroche and
Renault in~\cite{ADRen_amenable_2001}, is standard by now.

\begin{df}\label{df:AmenAction}
An action $G\acton X$ of a countable group $G$ on a compact metrizable space $X$ is said to be \emph{amenable} if there exists a sequence $(\mu_n)_{n\in\N}$ of continuous maps $\mu_n\colon X\to \Prob(G)$ such that for all $g\in G$ we have
\[\sup_{x\in X}\|\mu_n(g\cdot x)-(g\cdot\mu_n)(x)\|_1\xrightarrow{n\to \infty}0.\]
 
\end{df}

Note that a countable group is amenable if and only if it acts amenably on the one point space. 
More generally, an action $G\acton X$ on a compact Hausdorff space is amenable if and 
only if $C(X)\rtimes_r G$ is nuclear; see \cite[Corollary~6.2.14, Theorem~3.3.7]{ADRen_amenable_2001}, in which case the full and reduced crossed products of
$G\curvearrowright X$ agree.
The following lemma is folklore, and we 
include the proof for the convenience of the reader. 

\begin{lma}\label{lem:noinvariantmeasures}
Let $G\acton X$ be an amenable action of a countable group on a compact metrizable space. Then $G$ is amenable if and only if there exists a $G$-invariant probability 
measure on $X$. 
\end{lma}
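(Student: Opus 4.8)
The statement is an equivalence, and I would treat the two directions separately; the forward implication is soft, while the reverse one is where amenability of the action does the essential work.

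For the forward direction, suppose $G$ is amenable; then an invariant measure exists regardless of amenability of the action. The cleanest route is to fix an invariant mean $\phi$ on $\ell^\infty(G)$ and any point $x_0\in X$, and to define a state on $C(X)$ by $\nu(f)=\phi\big(g\mapsto f(g^{-1}\cdot x_0)\big)$ for $f\in C(X)$. Since $g\mapsto f(g^{-1}\cdot x_0)$ lies in $\ell^\infty(G)$, this is well defined; positivity and $\nu(1)=1$ are immediate, so by the Riesz representation theorem $\nu$ corresponds to a probability measure on $X$, and left-invariance of $\phi$ translates directly into $G$-invariance of $\nu$. Alternatively one may invoke the fixed-point property of amenable groups, applied to the affine continuous action of $G$ on the weak$^*$-compact convex set $\Prob(X)$.

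For the reverse direction, suppose $G\acton X$ is amenable and let $\nu$ be a $G$-invariant probability measure on $X$; I would manufacture an approximately invariant sequence on $G$ by integrating the amenability data against $\nu$. Let $(\mu_n)_{n\in\N}$ be as in \autoref{df:AmenAction} and define $m_n\in\Prob(G)$ by $m_n(s)=\int_X\mu_n(x)(s)\,d\nu(x)$ for $s\in G$; continuity of $\mu_n$ makes the integrand measurable, and $m_n$ is a probability measure because each $\mu_n(x)$ is. The key computation is as follows: for fixed $g\in G$ one has $g\cdot m_n=\int_X g\cdot\mu_n(x)\,d\nu(x)$, while $G$-invariance of $\nu$ (via the substitution $x\mapsto g\cdot x$) lets one rewrite $m_n=\int_X\mu_n(g\cdot x)\,d\nu(x)$. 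Subtracting gives $g\cdot m_n-m_n=\int_X\big(g\cdot\mu_n(x)-\mu_n(g\cdot x)\big)\,d\nu(x)$, whence $\|g\cdot m_n-m_n\|_1\le\sup_{x\in X}\|\mu_n(g\cdot x)-(g\cdot\mu_n)(x)\|_1\xrightarrow{n\to\infty}0$. Thus $(m_n)_{n\in\N}$ is a sequence of probability measures on $G$ that is approximately invariant under left translation, which is one of the standard characterizations of amenability of $G$.

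The only points requiring care are routine: the estimate $\|\int_X F\,d\nu\|_1\le\int_X\|F\|_1\,d\nu$ for the vector-valued integrand $x\mapsto g\cdot\mu_n(x)-\mu_n(g\cdot x)$, and the substitution $x\mapsto g\cdot x$ under the $G$-invariant measure $\nu$. I expect the main (and still minor) obstacle to be bookkeeping the translation conventions so that the rewritten integral produces exactly the defect term appearing in \autoref{df:AmenAction}; once the substitution is set up correctly, the norm estimate is immediate.
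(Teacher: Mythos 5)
Your proof is correct and follows essentially the same route as the paper's: the reverse direction is identical (integrate the maps $\mu_n$ against the invariant measure $\nu$ to get asymptotically invariant probability measures on $G$), and your forward direction is the paper's Poisson-map argument specialized to a point mass $\eta=\delta_{x_0}$. The only quibble is that your choice $g\mapsto f(g^{-1}\cdot x_0)$ makes $G$-invariance of $\nu$ correspond to invariance of $\phi$ under \emph{right} translations rather than left ones, which amenability of course also supplies.
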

\begin{proof}
For the ``only if'' implication, assume that $G$ is amenable and 
fix a $G$-invariant mean $\phi\colon L^{\infty}(G)\to \C$. Let $\eta$ be any probability measure on $X$. The Poisson map $P_\eta\colon C(X)\to L^{\infty}(G)$ defined by
\[P_{\eta}(f)(g)=\int_X f(g\cdot x)\ d\eta(x)\]
for $f\in C(X)$ and $g\in G$, 
is a unital positive $G$-equivariant map. Then $\phi\circ P_\eta\colon C(X)\to \C$ is a $G$-invariant state giving rise to a $G$-invariant probability measure on $X$. 
For the ``if'' implication, let $(\mu_n)_{n\in\N}$ be as in \autoref{df:AmenAction}, 
and let 
$\nu$ be a $G$-invariant probability measure on $X$. For $n\in\N$, define 
$\rho_n\in \mathrm{Prob}(G)$ by
\[\rho_n(E)=\int_X\mu_n(x)(E)\ d\nu(x)\]
for all $E\subseteq G$.
Then $\|\rho_n-g\cdot \rho_n\|_1\xrightarrow{n\to \infty}0$ for all $g\in G$,
and thus $G$ is amenable. 
\end{proof}
%
\begin{rem}
Recall that any trace on $C(X)\rtimes G$ induces a $G$-invariant
probability measure on $X$ by restriction, and conversely any
such measure induces a trace on $C(X)\rtimes G$ via the canonical
conditional expectation $C(X)\rtimes G\to C(X)$.
It thus follows from \autoref{lem:noinvariantmeasures}
that a nuclear crossed product
$C(X)\rtimes G$ 
has a trace if and only if $G$ is amenable. 
\end{rem}

We need the notion of dynamical subequivalence
for tuples of sets, which is the partial order used to
define the type semigroup of a dynamical system.

\begin{df}\label{df:DynSubeq}
Let $G\acton X$ be an action of a discrete group on a 
compact Hausdorff space.
Let $U_1,\ldots,U_n,V_1,\ldots,V_m$ be open subsets of $X$.
We say that the family $(U_i)_{i=1}^n$ is \emph{dynamically
subequivalent} to $(V_j)_{j=1}^m$, and write 
$(U_i)_{i=1}^n\prec (V_j)_{j=1}^m$, if 
for any closed subsets $A_i\subseteq U_i$,
for $i=1,\ldots,n$,
there exist finite open covers $\mathcal{W}_{i}$ of $A_i$, 
elements $g_W^{(i)}\in G$ for $W\in\mathcal{W}_i$, 
and a partition 
\[\mathcal{C}_1\sqcup \ldots \sqcup \mathcal{C}_m=\big\{(i,W)\colon i=1,\ldots,n, W\in \mathcal{W}_i\big\},\]
such that, for each $j=1,\ldots,m$ 
the sets $g_W^{(i)}\cdot W$, for $(i,W)\in \mathcal{C}_j$, are 
pairwise disjoint and contained in $V_j$.
Given a nonnegative integer $r$, we write
$(U_i)_{i=1}^n\prec_r (V_j)_{j=1}^m$ if 
$(U_i)_{i=1}^n\prec (V_j)_{j=1,\ldots,m,k=1,\ldots,r+1}$. In other words, 
$(U_i)_{i=1}^n\prec_r (V_j)_{j=1}^m$ if the family $(U_i)_{i=1}^n$ is subequivalent to $r+1$ disjoint 
copies of the family $(V_j)_{j=1}^m$.
\end{df}

We will identify tuples containing one element with their unique
element, and will thus write $U\prec_r V$ instead of 
$(U)\prec_r (V)$. Note that this definition of dynamical $r$-subequivalence
for open sets agrees with Kerr's
\cite[Definition~3.1]{Ker_dimension_2020}. We record here the observation that 
$\prec$ is transitive.

\begin{lma}\label{lma:transitive}
Let $G\acton X$ be an action of a discrete group on a compact 
Hausdorff space, and let 
$U_1,\ldots, U_n, V_1,\ldots,V_m, W_1,\ldots,W_r\subseteq X$
be open sets satisfying
\[(U_i)_{i=1}^n \prec (V_{j})_{j=1}^m \ \ \mbox{ and } \ \
(V_{j})_{j=1}^m\prec (W_k)_{k=1}^r.
\]
Then $(U_i)_{i=1}^n\prec (W_k)_{k=1}^r$.
\end{lma}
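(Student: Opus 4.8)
The plan is to fix closed sets $A_i\subseteq U_i$ for $i=1,\ldots,n$ and to manufacture the data required by \autoref{df:DynSubeq} witnessing $(U_i)_{i=1}^n\prec(W_k)_{k=1}^r$, by composing the data coming from the two given subequivalences. First I would apply $(U_i)_{i=1}^n\prec(V_j)_{j=1}^m$ to the sets $A_i$ to obtain finite open covers $\mathcal{W}_i$ of $A_i$, elements $g_W^{(i)}\in G$ for $W\in\mathcal{W}_i$, and a partition $\mathcal{C}_1\sqcup\cdots\sqcup\mathcal{C}_m=\{(i,W):i=1,\ldots,n,\ W\in\mathcal{W}_i\}$ such that for each $j$ the sets $g_W^{(i)}\cdot W$, for $(i,W)\in\mathcal{C}_j$, are pairwise disjoint and contained in $V_j$. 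The output of the first subequivalence consists of \emph{open} sets inside the $V_j$, whereas the second subequivalence must be fed \emph{closed} subsets of the $V_j$; bridging this gap is the one genuine point of the argument. To do so, I invoke the shrinking lemma for the finite open cover $\mathcal{W}_i$ of the compact set $A_i$ in the normal space $X$: there are open sets $\widehat W$ with $\ov{\widehat W}\subseteq W$ such that $\{\widehat W:W\in\mathcal{W}_i\}$ still covers $A_i$. Then, for each $j$, the set $C_j:=\bigcup_{(i,W)\in\mathcal{C}_j}g_W^{(i)}\cdot\ov{\widehat W}$ is a finite union of compact (hence closed) sets, and $g_W^{(i)}\cdot\ov{\widehat W}\subseteq g_W^{(i)}\cdot W\subseteq V_j$, so $C_j$ is a closed subset of $V_j$.

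Next I would apply $(V_j)_{j=1}^m\prec(W_k)_{k=1}^r$ to these closed sets $C_j\subseteq V_j$, obtaining finite open covers $\mathcal{Y}_j$ of $C_j$, elements $h_Y^{(j)}\in G$ for $Y\in\mathcal{Y}_j$, and a partition $\mathcal{D}_1\sqcup\cdots\sqcup\mathcal{D}_r=\{(j,Y):j=1,\ldots,m,\ Y\in\mathcal{Y}_j\}$ with the $h_Y^{(j)}\cdot Y$, for $(j,Y)\in\mathcal{D}_k$, pairwise disjoint and inside $W_k$. I now build the combined data. For $W\in\mathcal{W}_i$ let $j=j(i,W)$ be the unique index with $(i,W)\in\mathcal{C}_j$. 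Since $g_W^{(i)}\cdot\ov{\widehat W}\subseteq C_j$ is covered by $\mathcal{Y}_j$, the open sets $Z_{W,Y}:=\widehat W\cap(g_W^{(i)})^{-1}\cdot Y$, as $Y$ ranges over $\mathcal{Y}_j$, cover $\widehat W$; hence the finite family $\mathcal{Z}_i:=\{Z_{W,Y}:W\in\mathcal{W}_i,\ Y\in\mathcal{Y}_{j(i,W)}\}$ is an open cover of $A_i$. To each $Z_{W,Y}$ I assign the element $k_{W,Y}^{(i)}:=h_Y^{(j)}g_W^{(i)}\in G$, and I place the pair in class $\mathcal{E}_k$ exactly when $(j(i,W),Y)\in\mathcal{D}_k$; this yields a partition of $\{(i,Z):i=1,\ldots,n,\ Z\in\mathcal{Z}_i\}$ into $\mathcal{E}_1,\ldots,\mathcal{E}_r$. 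A direct computation gives $k_{W,Y}^{(i)}\cdot Z_{W,Y}=\big(h_Y^{(j)}g_W^{(i)}\cdot\widehat W\big)\cap\big(h_Y^{(j)}\cdot Y\big)\subseteq h_Y^{(j)}\cdot Y\subseteq W_k$, which settles containment.

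The remaining task is pairwise disjointness of the transported sets within each class $\mathcal{E}_k$, and this is where the bookkeeping must be done carefully. Take two distinct elements of $\mathcal{E}_k$, coming from $(i,W,Y)$ and $(i',W',Y')$ with $(j,Y),(j',Y')\in\mathcal{D}_k$, where $j=j(i,W)$ and $j'=j(i',W')$. If $(j,Y)\neq(j',Y')$, then $h_Y^{(j)}\cdot Y$ and $h_{Y'}^{(j')}\cdot Y'$ are disjoint by the second subequivalence, and since the two transported sets are contained in these, they are disjoint as well. If $(j,Y)=(j',Y')$, then $j=j'$ and $h_Y^{(j)}=h_{Y'}^{(j')}$, so that $(i,W)$ and $(i',W')$ both lie in $\mathcal{C}_j$ and, the indices being distinct, $(i,W)\neq(i',W')$; the first subequivalence then gives $g_W^{(i)}\cdot W\cap g_{W'}^{(i')}\cdot W'=\E$, whence $g_W^{(i)}\cdot\widehat W$ and $g_{W'}^{(i')}\cdot\widehat{W'}$ are disjoint, and applying the homeomorphism $h_Y^{(j)}$ shows the two transported sets are disjoint. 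Note this argument never uses that the $Z_{W,Y}$ are distinct as subsets of $X$, so I would work throughout with indexed families, sidestepping any accidental coincidence of cover elements. This verifies all the conditions of \autoref{df:DynSubeq}, yielding $(U_i)_{i=1}^n\prec(W_k)_{k=1}^r$. The main obstacle is thus the open-to-closed passage, resolved by the shrinking lemma, together with the case analysis in the disjointness check, where it is essential that coincident intermediate images force the first-level pieces to share a block $\mathcal{C}_j$.
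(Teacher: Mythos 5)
Your proof is correct and follows essentially the same route as the paper's: apply the first subequivalence, shrink the resulting open cover so that the closures of the transported pieces form closed subsets of the $V_j$, feed those into the second subequivalence, and compose via the sets $W\cap(g_W^{(i)})^{-1}(Y)$ with group elements $h_Y^{(j)}g_W^{(i)}$. The only difference is cosmetic (you name the shrunken sets $\widehat W$ explicitly and spell out the two-case disjointness check, which the paper leaves as "easily follows from the construction").
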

\begin{proof}
Let $A_i\subseteq U_i$, for $i=1,\ldots,n$, be closed subsets. 
Using that $(U_i)_{i=1}^n \prec (V_{j})_{j=1}^m$, find open covers $\mathcal W_i$ of $A_i$ and elements $g_W^{(i)}\in G$, for $i=1,\dotsc,n$ and $W\in \mathcal W_i$, and a partition
	\[\mathcal C_1\sqcup\ldots\sqcup\mathcal C_m=\big\{(i,W)\colon i=1,\dotsc,n, W\in \mathcal W_i\big\}\]
such that for each $j=1,\dotsc,m$, the sets $g_W^{(i)}W$ for $(i,W)\in \mathcal C_j$ are pairwise disjoint and contained in $V_j$. By shrinking the elements of the open covers $\mathcal W_i$ if necessary, we can without loss of generality assume that for each $j=1,\dotsc,m$, the set
\[B_j\coloneqq \bigcup_{(i,W)\in \mathcal C_j}g_W^{(i)}\overline W\]
is contained in $V_j$ as well. Using that $(V_{j})_{j=1}^m\prec (W_k)_{k=1}^r$, 
find open covers $\mathcal Y_j$ of $B_j$ and elements $h_Y^{(j)}\in G$, for $j=1,\dotsc,m$ and $Y\in \mathcal Y_j$, and a partition 
\[\mathcal D_1\sqcup \ldots\sqcup \mathcal D_r=\{(j,Y)\colon j=1,\dotsc,m,Y\in \mathcal Y_j\}\]
such that for each $k=1,\dotsc,r$, the sets $h_Y^{(j)}Y$ for $(j,Y)\in \mathcal D_k$ are pairwise disjoint and contained in $W_k$. 
For $i=1,\dotsc,n, W\in \mathcal W_i, j=1,\dotsc,m$ with $(i,W)\in \mathcal C_j$, and $Y\in \mathcal Y_j$, we define an open set
	\[Z_{W,Y}\coloneqq W\cap \big(g_W^{(i)}\big)^{-1}(Y)\subseteq A_i \cap \big(g_W^{(i)}\big)^{-1}(B_j).\]
Then for each $i=1,\dotsc,n$, the family
	\[\mathcal Z_i\coloneqq\left\{Z_{W,Y}\colon W\in \mathcal W_i, Y\in \mathcal Y_j, (i,W)\in \mathcal C_j\right\}\]
is an open cover of $A_i$. For $k=1,\dotsc,r$, we define 
	\[\mathcal E_k\coloneqq \left\{\left(i, Z_{W,Y}\right)\colon W\in \mathcal W_i, Y\in \mathcal Y_j, (i,W)\in \mathcal C_j, (j,Y)\in \mathcal D_k\right\}.\]
Note that
	\[\mathcal E_1\sqcup \ldots\sqcup \mathcal E_r=\{(i,Z)\colon i=1,\dotsc, n, Z\in \mathcal Z_i\}.\]
For $i=1,\dotsc,n, j=1,\dotsc,m$ and $Z_{W,Y}\in \mathcal Z_i$ with $Y\in \mathcal Y_j$, set  
\[t_{{W,Y}}^{(i)}\coloneqq h_Y^{(j)}g_W^{(i)}.\]. 
For fixed $k=1,\dotsc,r$, it easily follows from the construction that the sets $t_{{W,Y}}^{(i)}Z_{W,Y}$ for $(i,Z_{W,Y})\in \mathcal E_k$ are pairwise disjoint 
and contained in $W_k$. This shows that $(U_i)_{i=1}^n\prec (W_k)_{k=1}^r$,
as desired.
\end{proof}

We will ultimately only be interested in comparing individual 
open sets, but the perspective 
using tuples
will be helpful in the proof of \autoref{thm:groupstospaces}, 
since it will allow us to decrease the 
number of colors we need to obtain comparison. 
The reason for this is that 
$U\prec (V_j)_{j=1}^m$ is a much stronger condition than
$U\prec_{m-1} \bigcup_{j=1}^m V_{j}$.
For instance, it follows from the previous lemma that
$U\prec (V_j)_{j=1}^m\prec_r W$ implies $U\prec_r W$, while
the direct argument using 
$\bigcup_{j=1}^{m}V_j$ instead of $(V_j)_{j=1}^{m}$
would only yield $U\prec_{(r+1)m-1} W$.



\begin{df}[{\cite[Definition~3.2]{Ker_dimension_2020}}]
Let $X$ be a compact space, and let $r$ be a nonnegative integer. An action $G\acton X$
of a discrete group $G$ 
is said to have \emph{dynamical $r$-comparison}, if for any two nonempty open subsets $U,V\subseteq X$ satisfying $\mu(U)<\mu(V)$ for all
$G$-invariant probability measures $\mu$ on $X$, we have $U\prec_r V$.

If $r=0$, we say that $G\acton X$ has \emph{dynamical comparison}.
\end{df}

\begin{figure}[h]\label{fig:comparison}
\centering
\includegraphics[width=0.75\textwidth]{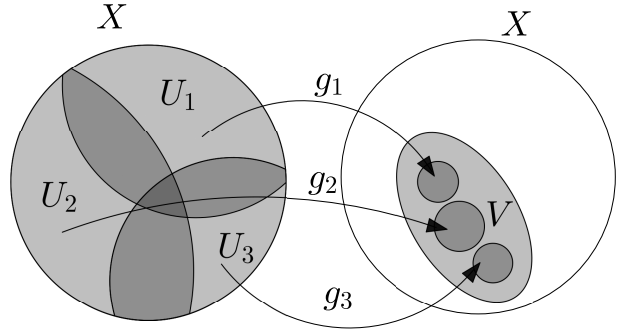} 
\caption{$X\prec V$}
\end{figure}

When $G\acton X$ is an amenable action of a nonamenable group, we have seen in
\autoref{lem:noinvariantmeasures} that there are no $G$-invariant probability measures 
on $X$. In particular, $G\acton X$ has $r$-dynamical comparison precisely
when $U\prec_r V$ for all nonempty open sets $U,V\subseteq X$. By transitivity, it suffices to check this for $U=X$, since every open set is subequivalent to the whole space.

While we will be interested in establishing dynamical comparison, the 
tools and arguments we use will only give dynamical $r$-comparison. 
For actions of nonamenable
groups without invariant probability measures, the following lemma shows that the 
two properties are in fact equivalent. This should be compared with 
\cite[Lemma~2.3]{Nar_polynomial_2021}, where it is shown that 
$r$-comparison implies comparison for minimal actions of amenable groups.




\begin{lma}\label{lma:minimalComparison}
Let $G\acton X$ be an action of a discrete group on a compact Hausdorff space with no 
invariant probability measures, 
and let $r$ be a nonnegative integer. Then $G\acton X$ has dynamical $r$-comparison if and only if $G\acton X$ has dynamical comparison.
\end{lma}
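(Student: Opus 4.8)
The plan is to prove the two implications separately, the converse being immediate and the forward direction carrying all the content. First I would dispose of the easy direction: $U\prec V$ always implies $U\prec_r V$ (witness the latter by placing all transported sets in a single color class and leaving the remaining $r$ classes empty), so dynamical comparison implies dynamical $r$-comparison. For the forward direction, assume $G\acton X$ has dynamical $r$-comparison. Since there are no $G$-invariant probability measures, the hypothesis $\mu(U)<\mu(V)$ is vacuous, so this assumption says exactly that $U\prec_r V$ for \emph{all} nonempty open $U,V\subseteq X$; and as observed just before the statement, it suffices to produce $X\prec V$ for every nonempty open $V$, because every open set is subequivalent to $X$ and $\prec$ is transitive (\autoref{lma:transitive}).

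The crux is to convert the absence of invariant measures into a paradoxicality statement for $V$. The key claim I would establish is that every nonempty open $V$ is \emph{properly infinite}, that is $(V,V)\prec V$, and hence $(V)_{j=1}^{r+1}\prec V$ by iterating the previous relation $r$ times. Granting this, the proof finishes immediately: dynamical $r$-comparison gives $X\prec_r V$, which by definition is $(X)\prec (V)_{j=1}^{r+1}$, and combining this with $(V)_{j=1}^{r+1}\prec V$ via \autoref{lma:transitive} yields $X\prec V$, as desired.

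To prove proper infiniteness of $V$ I would argue by contradiction through a Hahn--Banach (Tarski-type) argument at the level of the type semigroup of $G\acton X$. If $V$ were not properly infinite, then, using that this type semigroup is a refinement monoid (subdivide open sets and transport the pieces by the group), Tarski's alternative produces a nonzero $G$-invariant finitely additive measure $\mu$ with $0<\mu(V)<\infty$. But $X\prec_r V$ forces $\mu(X)\leq (r+1)\mu(V)<\infty$, so after normalization $\mu$ gives a $G$-invariant state on $C(X)$, i.e.\ (by Riesz representation) a $G$-invariant Borel probability measure on $X$, contradicting the hypothesis. The main obstacle is precisely this step. At the purely order-theoretic level one can only extract that $V$ is \emph{weakly} paradoxical, namely $(n+1)[V]\leq n[V]$ for some $n$, which is too weak to collapse the $r+1$ copies to a single one; and proper infiniteness of the ambient space $X$ alone does not help, since the available room lives in $X$ while the colors must be absorbed inside the target $V$. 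It is essential to exploit the refinement structure of the type semigroup to upgrade weak paradoxicality to genuine proper infiniteness $2[V]\leq[V]$, and this is where the no-invariant-measure hypothesis is used in full.
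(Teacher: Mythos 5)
Your reduction is sound as far as it goes: the easy direction is correct, $X\prec_r V$ is by definition $X\prec (V)_{k=1}^{r+1}$, and if you could show $(V)_{k=1}^{r+1}\prec V$ then \autoref{lma:transitive} would indeed finish the proof. The gap is the key claim itself, namely that the absence of invariant measures forces $(V,V)\prec V$ for every nonempty open $V$; the Tarski-type argument you sketch does not close it. First, for a general compact Hausdorff (not zero-dimensional) space it is already delicate to build a type semigroup out of open sets to which Tarski's dichotomy applies and whose states correspond to genuine $G$-invariant Borel probability measures: the relation $\prec$ involves closed subsets and finite open covers, and a finitely additive state on such a semigroup does not yield a Radon measure by ``Riesz representation'' without a separate regularity argument. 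Second, and more seriously, the upgrade from $(n+1)[V]\leq n[V]$ to $2[V]\leq [V]$ is exactly the step that fails: in the classical Tarski theorem this division is achieved by a K\"onig/Schr\"oder--Bernstein argument using arbitrary, non-open pieces, and in the topological type semigroup the required division/unperforation property is not available. In fact, ``no invariant measures implies every nonempty open set is properly infinite'' is, modulo the minimality that $r$-comparison gives you for free, essentially the dynamical comparison one is trying to establish (cf.\ Conjecture~\ref{cnj:MinAmenActComp}), so your route begs the question at its crucial step.

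The paper's proof avoids all of this with a shrinking trick: rather than absorbing $r+1$ copies of $V$ into $V$, it shrinks the \emph{target}. From $r$-comparison together with the absence of invariant measures one gets minimality and that $X$ has no isolated points, so a point $x\in V$ has $r+1$ distinct orbit points $t_1\cdot x,\dots,t_{r+1}\cdot x$ in $V$; choose a small open $W\ni x$ with $t_1W,\dots,t_{r+1}W$ pairwise disjoint and contained in $V$. Now apply the hypothesis to the pair $(X,W)$ --- legitimate because the measure condition is vacuous for \emph{every} target --- to get $X\prec_r W$, and send the $k$-th color class into $t_kW\subseteq V$. This gives $X\prec V$ directly, with no semigroup machinery. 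If you want to salvage your outline, replace the claim $(V)_{k=1}^{r+1}\prec V$ by the much weaker $(W)_{k=1}^{r+1}\prec V$ for a suitably chosen smaller $W$, which is exactly what the disjoint translates provide.
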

\begin{proof}
We prove the nontrivial implication, so assume that $G\acton X$ has dynamical $r$-comparison.
One readily shows, arguing as in the discussion after 
\cite[Definition 2.4]{Ma_comparison_2020}, 
that $G\curvearrowright X$ is minimal and $X$ has no isolated points.
As explained above, it suffices to fix a nonempty open set $V\subseteq X$
and show that $X\prec V$.
Fix $x\in V$, and note that $V\cap G\cdot x$ is an infinite set. Find $t_1,\ldots, t_{r+1}\in G$ such that $t_k\cdot x\in V$ for all $k=1,\ldots,r+1$ and 
$t_k\cdot x \neq t_\ell\cdot x$ whenever $k\neq \ell$. Using that $X$ is 
Hausdorff, find an open set $W\subseteq X$ such that $x\in W$ and $\{t_k\cdot W\}_{k=1}^{r+1}$ are pairwise disjoint sets in $V$. Since $X\prec_r W$ by assumption, 
there exist a finite open cover 
$\mathcal{O}=\mathcal{O}_1\sqcup\ldots\sqcup \mathcal{O}_{r+1}$ of $X$, and 
$g_{O}\in G$ for $O\in\mathcal{O}$, such that the sets $g_{O}\cdot O$, for 
$O\in \mathcal{O}_k$ are pairwise disjoint subsets of $W$, for every $k=1,\ldots, r+1$.
Now, $\{t_kg_{O}\cdot O\}_{O\in\mathcal{O}_k,k=1,\ldots, r+1}$ is a collection of pairwise disjoint sets in $V$, verifying that $X\prec V$ as desired.
\end{proof}

We close this section by giving a simple proof of Theorem~1.1 
from~\cite{Ma_comparison_2020}, which avoids the use of scaling
elements and hereditary subalgebras in favor of Cuntz semigroup techniques.
(We refer the reader to \cite[Chapter 2]{AntPerThi_tensor_2018} or 
\cite[Sections~2 and~3]{GarPer_modern_2022} for an 
introduction to these.)
Given positive elements $a$ and $b$ in a $C^*$-algebra $A$, we say that $a$ is 
\emph{Cuntz subequivalent} to $b$ in $A$, written 
$a\precsim b$ in $A$, if there exists a sequence $(c_n)_{n\in\N}$ in $A$ such that
$\lim_{n\to\infty}c_n^*bc_n= a$. We write $a\sim b$ if $a\precsim b$ 
and $b\precsim a$. We will use the fact that if $A$ is abelian, 
then $a\precsim b$ if and only if 
the open support of $a$ is contained in that of $b$. In particular, in a general
$C^*$-algebra, if 
$a$ and $b$ commute and $0\leq a\leq b$, then $a\precsim b$.

\begin{thm}[Ma]\label{thm:Ma}
Let $G\acton X$ be a minimal and topologically free action of a discrete group 
on a compact Hausdorff space. Assume that $G\acton X$ has dynamical comparison 
and admits no invariant probability measures. Then $C(X)\rtimes_r G$ is simple
and 
purely infinite.
\end{thm}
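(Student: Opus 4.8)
**The plan is to show first that $C(X)\rtimes_r G$ is simple, and then establish pure infiniteness by producing, for each nonzero positive element, a Cuntz-subequivalence to a projection-like element that is properly infinite.**

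For simplicity, I would invoke the standard characterization: since $G\acton X$ is minimal and topologically free, the reduced crossed product $C(X)\rtimes_r G$ is simple by \cite[Theorem~2]{ArcSpi_topologically_1994}. (This is already quoted in the introduction, so I would just cite it.) The substance of the theorem is pure infiniteness, and the key structural input is that, by \autoref{lem:noinvariantmeasures} and the hypothesis, there are no $G$-invariant probability measures, so dynamical comparison means literally $U\prec V$ for \emph{all} nonempty open $U,V$, and in particular $X\prec V$ for every nonempty open $V$ by the transitivity noted after \autoref{lma:minimalComparison}.

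For pure infiniteness, I would recall that a simple $C^*$-algebra is purely infinite if and only if every nonzero positive element $a$ is properly infinite, equivalently $a\oplus a\precsim a$; by approximation and scaling it suffices to treat elements of the form $a=f\in C(X)_+$ with $f\neq 0$, or more precisely to handle a positive element in the canonical copy of $C(X)$ obtained by cutting down with the conditional expectation. Let $E\colon C(X)\rtimes_r G\to C(X)$ be the canonical faithful conditional expectation. The main mechanism translating the dynamical relation $\prec$ into Cuntz subequivalence is the following: if $U,V\subseteq X$ are open with $U\prec V$, witnessed by an open cover $\{W_\ell\}$ of a closed set exhausting $U$ and group elements $g_\ell$ sending the $W_\ell$ to disjoint subsets of $V$, then for $f$ supported in $U$ one can write $f\precsim$ (an element supported in $V$) inside the crossed product, using that the implementing unitaries $u_{g_\ell}$ conjugate $C(W_\ell)$ into $C(g_\ell W_\ell)\subseteq C(V)$ and the disjointness makes the images orthogonal. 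Concretely, I would choose a partition of unity subordinate to $\{W_\ell\}$ and set $c=\sum_\ell u_{g_\ell} h_\ell$ for suitable $h_\ell\in C(X)$, so that $c^* b c$ approximates $f$ while $b$ is supported in $V$; the orthogonality of the $g_\ell W_\ell$ kills cross terms. Applying this with $V$ a union of two disjoint open copies (which exists because $X$ has no isolated points, as noted in \autoref{lma:minimalComparison}) and using $X\prec V$, one gets two orthogonal positive elements each Cuntz-equivalent to the original, yielding proper infiniteness.

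The hard part will be the translation step from the combinatorial/dynamical subequivalence $U\prec V$ to an honest Cuntz subequivalence $f\precsim b$ in the noncommutative algebra $C(X)\rtimes_r G$, and in particular controlling the error terms coming from the finite open cover and the partition of unity so that $c^*bc$ genuinely approximates $f$ rather than merely dominating a cutdown. The cleanest route is to first reduce, using functional calculus, to showing $(f-\ep)_+\precsim b$ for every $\ep>0$, which lets one work with the \emph{closed} subset $\{f\ge \ep\}\subseteq U$ that the definition of $\prec$ directly applies to; the finiteness of the cover and the explicit group elements then give a finite sum $c=\sum_\ell u_{g_\ell}h_\ell$, and the disjointness of the $g_\ell\cdot W_\ell$ inside $V$ guarantees $c^* b c \ge (f-\ep)_+$ up to a controllable remainder. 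I expect the bookkeeping of these estimates — exactly as in the Cuntz-semigroup proof of pure infiniteness from comparison — to be the only genuinely technical ingredient, with everything else following formally from \autoref{lma:transitive}, \autoref{lma:minimalComparison}, and the absence of invariant measures.
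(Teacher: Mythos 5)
Your overall architecture matches the paper's: simplicity from \cite[Theorem~2]{ArcSpi_topologically_1994}, and pure infiniteness by reducing to a nonzero positive element of $C(X)$ and then converting $X\prec V$ into a Cuntz subequivalence. The part you flag as ``the hard part'' --- turning the combinatorial relation $U\prec V$ into an honest Cuntz subequivalence via $c=\sum_\ell u_{g_\ell}h_\ell$, orthogonality of the translates, and the $(f-\ep)_+$ trick --- is correctly sketched and is precisely \cite[Lemma~12.3]{Ker_dimension_2020}, which the paper simply cites rather than reproves.

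The genuine gap is in the step you treat as routine: the reduction from an arbitrary nonzero positive $a\in C(X)\rtimes_r G$ to a nonzero positive $f\in C(X)$ with $f\precsim a$. ``Cutting down with the conditional expectation'' does not work as stated: $E(a)$ is indeed a nonzero positive element of $C(X)$, but $E$ does not respect Cuntz comparison and $E(a)\precsim a$ fails in general. What is true --- and what the paper invokes as \cite[Lemma~7.9]{Phi_large_2014} --- is that for a topologically free action one can find \emph{some} nonzero positive $f\in C(X)$ with $f\precsim a$; the proof is a genuine perturbation argument in which one localizes near a point with trivial isotropy so that the off-diagonal Fourier coefficients of $a$ become negligible before comparing with an element of $C(X)$. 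This is the only place in the pure infiniteness half where topological freeness is used, and the fact that your sketch never invokes that hypothesis after the simplicity step is the symptom of the elision. Once $f\precsim a$ is secured, your route (show $1_{C(X)}\precsim f$, deduce $a\sim 1_{C(X)}$ and $a\oplus a\precsim a$) and the paper's route ($b\precsim 1_{C(X)}\precsim f\precsim a$ for all nonzero positive $b$, giving pure infiniteness in the sense of \cite[Definition~4.1]{KirRor_nonsimple_2000}) are essentially interchangeable.
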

\begin{proof} Simplicity follows from \cite[Theorem~2]{ArcSpi_topologically_1994}, 
so we prove pure infiniteness. 
Let $a,b\in C(X)\rtimes_r G$ be nonzero positive contractions. We will show that 
$b\precsim a$ in $C(X)\rtimes_r G$, which implies that
$C(X)\rtimes_rG$ is purely infinite
by \cite[Defintion~4.1]{KirRor_nonsimple_2000}. Since $b\precsim 1_{C(X)}$, it is enough to show that $1_{C(X)}\precsim a$ in $C(X)\rtimes_r G$.
By \cite[Lemma~7.9]{Phi_large_2014}, 
there exists a nonzero positive contraction $f\in C(X)$ such that 
$f\precsim a$ in $C(X)\rtimes_rG$. Set $W=\{x\in X\colon f(x)>0\}$, and
choose $U$ to be a nonempty open subset of $W$ such that $\overline{U}\subseteq W$.
There exists a positive contraction $g\in C(X)$ such that $g=0$ on $X\setminus W$ and $g=1$ on $\overline{U}$.
Since $X\prec U$,
\cite[Lemma~12.3]{Ker_dimension_2020} implies that
$1_{C(X)}\precsim g$ in $C(X)\rtimes_rG$.
On the other hand, we have 
$g\precsim f$ since $\{x\in X\colon g(x)>0\}\subseteq W=\{x\in X\colon f(x)>0\}$. Transitivity of Cuntz subequivalence gives $1_{C(X)}\precsim f$, and so $1_{C(X)}\precsim a$, as desired.
\end{proof}

\section{Paradoxical towers give dynamical comparison}\label{sec:paradoxical towers}

In this section, we introduce the notion of \emph{paradoxical 
towers}, which is the main technical tool in this work. 
For (certain extensions of) 
groups admitting paradoxical towers, we show that amenable, minimal 
actions always have dynamical comparison; see \autoref{thm:groupstospaces}. 
Using this, we establish
classifiability of a large class of crossed products 
in \autoref{cor:PurInf}. 
Examples are discussed in \autoref{sec:examples}.

\begin{df}\label{df:ParadoxicalTower}
Let $n\in\mathbb{N}$. We say that a countable group $G$ 
admits \emph{$n$-paradoxical towers} if for every finite subset $D\subseteq G$ there are $A_1,\ldots,A_n\subseteq G$ and $g_1,\ldots,g_n\in G$ such that 
\begin{enumerate}
\item the sets $dA_i$, for $d\in D$ and $i=1,\ldots,n$, are pairwise disjoint.
\item $G=\bigcup_{i=1}^{n}g_iA_i$.
\end{enumerate}
%
We say that $G$ admits \emph{paradoxical towers} if there is $n\in\N$ such that $G$ admits $n$-paradoxical towers.
\end{df}

It is easy to see that a group admitting paradoxical towers is 
necessarily nonamenable. 
The class of groups admitting paradoxical towers is very 
large, but it does not exhaust all nonamenable groups; for example,
$\mathbb{F}_2\times\Z$ does not admit paradoxical towers (see \autoref{eg:F2Z}).
We postpone this discussion until \autoref{sec:examples}, 
and we only present here the
following basic example (see \autoref{eg:acylindrical} for a
much larger class).

\begin{prop}\label{prop:F2}
The free group $\mathbb{F}_2$ admits 2-paradoxical towers. In 
fact, given a finite subset $D\subseteq \mathbb{F}_2$ there are nonempty subsets
$A_1,A_2,A_3\subseteq \mathbb{F}_2$ and $g_1,g_2,g_3\in \mathbb{F}_2$
such that: 
\be\item the sets $dA_j$, for $d\in D$ and $j=1,2,3$, are pairwise disjoint,
\item the sets $\mathbb{F}_2\setminus g_jA_j$, for $j=1,2,3$, are pairwise
disjoint.
\ee
\end{prop}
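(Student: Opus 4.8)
The plan is to work inside the tree structure of $\mathbb{F}_2=\langle a,b\rangle$. For a reduced word $w$ let $C(w)\subseteq\mathbb{F}_2$ be the cone $\{wv: wv\text{ reduced}\}$ of all elements whose reduced form begins with $w$; I will use the elementary facts that $C(u)\cap C(v)\neq\varnothing$ if and only if $u\preceq v$ or $v\preceq u$ in the prefix order, and that $d\,C(w)=C(dw)$ provided the cancellation in $dw$ does not reach the last letter of $w$. Set $m=\max_{d\in D}|d|$ and $K=2m+1$, and define the three deep words
\[ w_1=b^{K}a^{K},\qquad w_2=b^{K}a^{-K},\qquad w_3=a^{K}b^{K}, \]
with last letters $a$, $a^{-1}$, $b$. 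I would then take $A_j=C(w_j)$ (nonempty) and $g_j=w_j^{-1}$. A direct computation gives $g_jA_j=w_j^{-1}C(w_j)=\mathbb{F}_2\setminus C(\ell_j^{-1})$, where $\ell_j$ is the last letter of $w_j$; thus the complements $\mathbb{F}_2\setminus g_jA_j$ are $C(a^{-1})$, $C(a)$, $C(b^{-1})$, which begin with three distinct letters and are therefore pairwise disjoint. This settles condition~(2).

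For condition~(1), since $dA_j=C(dw_j)$ it suffices to show that the family $\{dw_j: d\in D,\ j=1,2,3\}$ is an antichain for the prefix order, as then the corresponding cones are pairwise disjoint. The structural observation driving everything is that, because $|d|\le m<K$, left multiplication by $d$ can only cancel into the leading block of $w_j$, so each $dw_j$ is a reduced word whose suffix is the intact block $\ell_j^{K}$, and whose only other occurrences of the letter $\ell_j$ lie in a prefix of length $\le m$. For two words in different families (so $\ell_i\neq\ell_j$) I would rule out comparability by comparing positions of the long homogeneous blocks: since both words have length roughly $2K$ while the $d$-parts have length $\le m<K$, a prefix relation would place the suffix block $\ell_i^{K}$ of one word over a stretch of the other where not all letters equal $\ell_i$, a contradiction using $K>2m$. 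For two words in the \emph{same} family I would argue by conjugation: if $dw_j\preceq d'w_j$ then $d'w_j=dw_j\cdot t$ with $t=w_j^{-1}(d^{-1}d')w_j$ a reduced extension; writing $t=a^{-K}(b^{-K}e_0b^{K})a^{K}$ for $w_1$ (analogously for $w_2,w_3$) with $e_0=d^{-1}d'\neq e$ of length $\le 2m<K$, the inner word $b^{-K}e_0b^{K}$ never begins with $a^{\pm1}$, so $t$ begins with $a^{-1}$; as $dw_1$ ends in $a$, this contradicts reducedness of $dw_1\cdot t$.

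The main obstacle is exactly this antichain verification, and within it the same-family case, which encodes the packing condition that naive cones fail: translating a cone $C(w)$ by a short element along its own axis nests it inside itself (for instance $a\,C(a^{N})=C(a^{N+1})\subsetneq C(a^{N})$). The construction circumvents this by choosing words that are protected at both ends — a long $b$-block at the front and a long homogeneous block of the prescribed last letter at the back — so that conjugation by any element of length $\le 2m<K$ cannot collapse $w_j^{-1}e_0w_j$ and create a nesting; this is precisely where $K=2m+1$ is used. Once condition~(1) holds, the main claim that $\mathbb{F}_2$ admits $2$-paradoxical towers follows by keeping only $A_1,A_2,g_1,g_2$: condition~(1) is inherited, and $g_1A_1\cup g_2A_2=(\mathbb{F}_2\setminus C(a^{-1}))\cup(\mathbb{F}_2\setminus C(a))=\mathbb{F}_2$ because $C(a^{-1})\cap C(a)=\varnothing$.
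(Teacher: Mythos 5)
Your proof is correct and follows essentially the same route as the paper: you take $A_j$ to be cones $W(w_j)$ on three words whose last letters are $a,a^{-1},b$, set $g_j=w_j^{-1}$ so that the complements of $g_jA_j$ become the disjoint cones $W(a^{-1}),W(a),W(b^{-1})$, and use a long protective block to force the translates $dA_j$ to be pairwise disjoint. The only differences are cosmetic — the paper uses the shorter words $a^{2m}ba$, $a^{2m}ba^{-1}$, $a^{2m}b^2$ and a slightly slicker verification of condition (1) via the common prefix $a^{2m}b$, whereas you use $b^Ka^K$, $b^Ka^{-K}$, $a^Kb^K$ and a case split with a conjugation argument for the same-family case.
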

\begin{proof}
We begin by observing that the property in the statement implies that 
$\mathbb{F}_2$ has 2-paradoxical towers. Indeed, condition (2)
implies that 
\[\emptyset=\big(\mathbb{F}_2\setminus g_1A_1\big)\cap \big(\mathbb{F}_2\setminus g_2A_2\big),\]
and by taking complements we get $\mathbb{F}_2=g_1A_1\cup g_2A_2$. In particular,
$A_1,A_2$ and $g_1,g_2$ satisfy the conditions of \autoref{df:ParadoxicalTower} for $n=2$.

Denote by $a,b$ the generators of $\mathbb{F}_2$, and set 
$L=\{a,b,a^{-1},b^{-1}\}$. For $h\in \mathbb{F}_2$, we write
$W(h)\subseteq \mathbb{F}_2$ for the set of all reduced words 
with letters from $L$ which start with $h$. If
$x\in L$ is the last letter of $h$, then a generic element of $W(h)$ has the
form $hg$ with $g\notin W(x^{-1})$. In particular,
\begin{equation}\label{eqn:ComplementF2}\tag{3.1}
\mathbb{F}_2\setminus h^{-1}W(h)= W(x^{-1}).
\end{equation}
For $r\geq 0$, we write $B_r$ for the set 
of all reduced words of length at most $r$; note that $B_rB_s= B_{r+s}$ for 
all $r,s>0$.
Let $D\subseteq \mathbb{F}_2$ be a finite set.
Find $m\geq 0$ with $D\subseteq B_m$, and define 
\[h_1=a^{2m}ba, \ \ h_2=a^{2m}ba^{-1}, \ \ \mbox{ and }
\ \ h_3=a^{2m}b^2.
\]
For $i=1,2,3$, set $A_i=W(h_i)$. 
We claim that these sets satisfy condition (1).
Since every element of $D$ has length at most $m$, it suffices to check
that whenever $e\in B_{2m}$ and $i,j=1,2,3$ satisfy 
$eA_i\cap A_{j}\neq \emptyset$, then $e=1$ and $i=j$.
Given $e,i,j$ as above, if $eA_i$ intersects $A_j$ then there is 
$g\in A_i$ such that $eg$, after reduction, starts with $2m$ copies of $a$. 
Since the $(2m+1)$-st letter of $g$ is not an $a$, it follows that 
the product
of $e$ and $g$ cannot have any cancelations, and thus
$e=a^{k}$ for some $0\leq k\leq m$. 
On the other hand, $a^{k}h_i$ never has $h_j$ as an initial segment unless $k=0$
and $i=j$. This proves the claim.

For $i=1,2,3$, set $g_i=h_i^{-1}$. Using \eqref{eqn:ComplementF2}, we get 
\[\mathbb{F}_2\setminus g_1A_1=W(a^{-1}), \ \ 
 \mathbb{F}_2\setminus g_2A_2=W(a), \ \ \mbox{ and } \ \
 \mathbb{F}_2\setminus g_3A_3=W(b^{-1}), \ \
\]
and these sets are clearly pairwise disjoint. This finishes the proof.
\end{proof}

We will need some auxiliary lemmas. 
In the following, for $m=1$ we just get the definition of
paradoxical towers. In general, the strengthening is that
the towers $A_i^{(j)}$ are jointly (and not just separately) 
$D$-free.

\begin{lma}\label{lma:MoreTowers}
Let $n\in\N$ and let $G$ be a countable group with $n$-paradoxical
towers, and let $D\subseteq G$ be a finite subset.
For every $m\in\N$, there exist
subsets $A_i^{(j)}\subseteq G$ and group elements $g_i^{(j)}\in G$,
for $i=1,\ldots,n$ and $j=1,\ldots,m$, such that:
\begin{enumerate}
\item the sets $dA^{(j)}_i$, for $d\in D$, $i=1,\ldots,n$,
and $j=1,\ldots,m$, are pairwise disjoint.
\item $G=\bigcup_{i=1}^{n}g^{(j)}_iA^{(j)}_i$ for every $j=1,\ldots,m$.
\end{enumerate}
\end{lma}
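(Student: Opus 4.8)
The plan is to reduce the construction of $m$ jointly $D$-free towers to a \emph{single} application of the $n$-paradoxical tower property, for a suitably enlarged finite set, so no induction on $m$ is needed. The key observation is that right-translating a tower does not disturb its covering property, provided we compensate on the left via the covering elements, whereas it is the left-translates of the pieces that govern $D$-freeness. This decoupling is what lets us manufacture $m$ copies at once.

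First I would fix $m$ group elements $s_1,\dots,s_m\in G$ whose right-translates $Ds_1,\dots,Ds_m$ are pairwise disjoint. Such elements exist because $G$ is infinite (any group admitting paradoxical towers is nonamenable, hence infinite): one chooses the $s_j$ greedily, picking $s_k\notin\bigcup_{l<k}D^{-1}Ds_l$ at each step, which is possible since $D^{-1}D$ is finite. Note that disjointness of the sets $Ds_j$ is precisely the assertion that the map $(d,j)\mapsto ds_j$ from $D\times\{1,\dots,m\}$ into $G$ is injective, a reformulation I will use below.

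Next I would set $D'=\bigcup_{j=1}^{m}Ds_j$, a finite set, and apply the hypothesis that $G$ admits $n$-paradoxical towers to $D'$, obtaining $A_1,\dots,A_n\subseteq G$ and $g_1,\dots,g_n\in G$ such that $\{d'A_i\colon d'\in D',\ i=1,\dots,n\}$ is pairwise disjoint and $G=\bigcup_{i=1}^{n}g_iA_i$. I then define
\[
A_i^{(j)}=s_jA_i \qquad\text{and}\qquad g_i^{(j)}=g_is_j^{-1}.
\]
The covering condition (2) is then immediate and costs nothing: for each $j$ we have $g_i^{(j)}A_i^{(j)}=g_is_j^{-1}s_jA_i=g_iA_i$, so $\bigcup_{i}g_i^{(j)}A_i^{(j)}=\bigcup_i g_iA_i=G$.

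The only real content is the joint disjointness (1), and this is where the choice of the $s_j$ does its work. For $(d,i,j)\in D\times\{1,\dots,n\}\times\{1,\dots,m\}$ one has $dA_i^{(j)}=(ds_j)A_i$ with $ds_j\in D'$, so the family in (1) is a reindexing of a subfamily of the pairwise disjoint family $\{d'A_i\}_{d'\in D',\,i}$. It therefore suffices to check that $(d,i,j)\mapsto(ds_j,i)$ is injective: if $ds_j=d's_{j'}$ and $i=i'$, then disjointness of $Ds_j$ and $Ds_{j'}$ forces $j=j'$, whence $ds_j=d's_j$ gives $d=d'$. Thus distinct triples yield disjoint sets, establishing (1). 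I expect the only subtlety to be the bookkeeping of the two levels of translation---keeping straight that the covering elements must absorb the right-translate $s_j$ while $D$-freeness is enforced by enlarging $D$ to $D'$---but no genuine obstacle arises beyond this.
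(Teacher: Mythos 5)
Your proposal is correct and is essentially identical to the paper's own proof: both choose $s_1,\dots,s_m$ with $Ds_1,\dots,Ds_m$ pairwise disjoint, apply the $n$-paradoxical tower property to $\widetilde D=\bigsqcup_j Ds_j$, and set $A_i^{(j)}=s_jA_i$, $g_i^{(j)}=g_is_j^{-1}$. Your write-up merely spells out the verification of conditions (1) and (2), which the paper leaves to the reader.
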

\begin{proof}
Let $D\subseteq G$ be a finite subset.
Since $G$ is infinite, there exist $s_1,\ldots,s_m\in G$
such that $Ds_j$, for $j=1,\ldots,m$, are pairwise disjoint sets.
Set $\widetilde{D}=\bigsqcup_{j=1}^m Ds_j$,
which is a finite subset of $G$.
Since $G$ admits $n$-paradoxical towers, there are sets 
$A_1,\dotsc,A_n\subseteq G$ and elements $g_1,\dotsc,g_n\in G$ such that 
the sets $\widetilde{d} 
{A}_i$, for $\widetilde{d}\in 
\widetilde{D}$ and $i=1,\dotsc,n$, are pairwise disjoint, and
$G=\bigcup_{i=1}^n{g}_i{A}_i$.

For $i=1,\ldots,n$ and $j=1,\ldots,m$, set 
\[A_i^{(j)}=s_jA_i \ \ \mbox{ and } \ \ g_i^{(j)}=g_is_j^{-1}.\]
One readily checks that conditions (1) and (2) in the statement are
satisfied.
\end{proof}



Given a metric space $(X,d)$, a set $U\subseteq X$ and $\varepsilon>0$, we set
\[U^{-\varepsilon}= \{x\in U\colon d(x,X\setminus U)>\varepsilon \}. \]


\begin{lma}\label{lma:Petr}
Let $G\acton X$ be an action of a countable group
on a compact metric space $X$,
let $n$ be a nonnegative integer, let $\varepsilon>0$, 
and let $D\subseteq G$ be a finite symmetric set.
\be\item 
Let $V, U_0,\ldots,U_n\subseteq X$ be open sets and let 
$R\colon X\to [0,\infty)$ be a function satisfying
\[\left|\{g\in D^2\colon g\cdot x\in V\}\right|+R(x)<\sum_{k=0}^n\left|\{g\in D\colon g\cdot x\in U_k^{-\varepsilon}\}\right|,\]
for all $x\in X$. Then for every closed subset
$A\subseteq V$ there exist $0<\widetilde{\varepsilon}<\varepsilon$, 
a finite open cover
$\mathcal{O}$ of $A$, 
group elements $s_O\in G$, for $O\in\mathcal{O}$, and a partition
$\mathcal{O}=\mathcal{O}_0\sqcup \ldots\sqcup \mathcal{O}_n$
satisfying the following properties:
\be\item 
for every $k=0,\ldots,n$, 
the family $\{s_O\cdot O\colon O\in\mathcal{O}_k\}$
consists of pairwise disjoint subsets of $U_k$, 
\item with $B_k$ denoting the 
closure of $\bigcup_{O\in\mathcal{O}_k} s_O\cdot O$
and $\widetilde{U}_k=U_k\setminus B_k$,
we have
\[R(x)<\sum_{k=0}^n\left|\{g\in D\colon g\cdot x\in \widetilde{U}_k^{-\widetilde{\varepsilon}}\}\right|, \]
for all $x\in X$. 
\ee
\item Let $V_1,\ldots,V_m,U\subseteq X$ be open sets
satisfying 
\[\sum_{j=1}^m \left|\{g\in D^2\colon g\cdot x\in V_j\}\right|<(n+1)\left|\{g\in D\colon g\cdot x\in U^{-\varepsilon}\}\right|, \]
for all $x\in X$. Then $(V_j)_{j=1}^m\prec_{n}U$.
\ee
\end{lma}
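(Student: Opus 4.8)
The plan is to prove the two parts in order: part (1) is the technical core, and part (2) will follow from it by a short induction on $m$. For part (1), the strategy is to turn the transportation of $A$ into the $U_k$'s into a finite combinatorial matching problem and solve it using the counting hypothesis (call it $(\star)$). First I would exploit compactness: since $D^2$ is finite and $X$ is compact, the homeomorphisms $\{x\mapsto g\cdot x : g\in D^2\}$ are uniformly continuous, so I can fix $\delta\in(0,\varepsilon)$ with $\varepsilon-2\delta>0$ and some $\eta>0$ such that $d(x,y)<\eta$ forces $d(g\cdot x,g\cdot y)<\delta$ for all $g\in D^2$. Covering $A$ by finitely many open pieces of diameter $<\eta$ makes the problem discrete: if a piece $O$ meets $\{x : g\cdot x\in U_k^{-\varepsilon}\}$, then $g\cdot O\subseteq U_k$, so each piece can genuinely be transported into $U_k$ by $g$; and the counts $|\{g\in D : g\cdot x\in U_k^{-\varepsilon}\}|$ are constant along each piece up to the controlled loss of margin recorded by passing to some $\widetilde\varepsilon\in(0,\varepsilon)$.

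Next I would set up a bipartite matching between the pieces covering $A$ and a family of landing slots in the $U_k$'s, indexed by a color $k\in\{0,\dots,n\}$ together with a cell of an auxiliary fine partition of $X$, arranged so that any matching assigning distinct slots to distinct pieces automatically produces images $s_O\cdot O$ that are pairwise disjoint \emph{within each color} and contained in $U_k$; this gives conclusion (a). The crucial input is that the marriage condition for this graph is implied by $(\star)$: the neighborhood of a set of pieces is governed by where their $D$-translates can land, whereas two pieces of the same color \emph{conflict} only when related by an element of $D\cdot D=D^2$ (a collision $g\cdot O\cap g'\cdot O'\neq\varnothing$ with $g,g'\in D$ forces the source points to be related by $g'^{-1}g\in D^2$). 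This is exactly why the demand side of $(\star)$ is counted over $D^2$ and the supply side over $D$. For (b), the key observation is that a deep slot $g$ (with $g\cdot x\in U_k^{-\varepsilon}$) can be \emph{destroyed}, i.e.\ fail $g\cdot x\in\widetilde U_k^{-\widetilde\varepsilon}$ after removing $B_k=\overline{\bigcup_{O\in\mathcal O_k}s_O\cdot O}$, only if $g\cdot x$ lies within $\widetilde\varepsilon$ of some placed image $s_O\cdot O$; tracing $O$ back to a source point in $A\subseteq V$ and using $d(A,X\setminus V)>0$ together with the margins shows $s_O^{-1}g\in D^2$ sends $x$ into $V$. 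Hence the number of destroyed slots at $x$ is bounded by $|\{g\in D^2 : g\cdot x\in V\}|$, and the number surviving is at least $\sum_k|\{g\in D : g\cdot x\in U_k^{-\varepsilon}\}|-|\{g\in D^2 : g\cdot x\in V\}|>R(x)$, which is precisely (b).

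With part (1) in hand, I would prove part (2) by induction on $j=1,\dots,m$, peeling off the sets $V_j$ one at a time. Setting $U_0=\dots=U_n=U$, the hypothesis of part (2) is exactly the hypothesis of part (1) with $V=V_1$ and reserve $R(x)=\sum_{j=2}^m|\{g\in D^2 : g\cdot x\in V_j\}|$, because $\sum_k|\{g\in D : g\cdot x\in U^{-\varepsilon}\}|=(n+1)|\{g\in D : g\cdot x\in U^{-\varepsilon}\}|$. Applying part (1) transports a cover of any closed $A_1\subseteq V_1$ into the $n+1$ copies of $U$ and returns leftover sets $\widetilde U_k=U\setminus B_k$ with a margin $\widetilde\varepsilon$ for which the inequality in (b) is exactly the hypothesis of part (1) for $V_2$ with the new reserve $\sum_{j=3}^m|\{g\in D^2 : g\cdot x\in V_j\}|$. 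Iterating $m$ times, and using that each stage transports into the current leftover (hence into $U$, and disjointly from all previously placed color-$k$ images, whose closures were removed), yields covers of all the $A_j$ transported into $n+1$ pairwise disjoint copies of $U$, which is the definition of $(V_j)_{j=1}^m\prec_n U$.

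The genuinely delicate point is the combinatorial core of part (1): arranging the landing slots so that the marriage condition follows from $(\star)$ while simultaneously guaranteeing disjointness within colors and keeping the $\varepsilon\to\widetilde\varepsilon$ margin bookkeeping consistent. In particular, the bound on the number of destroyed (or blocked) slots by the $D^2$-count, rather than by some cruder quantity, is where the quadratic thickening $D^2$ is used essentially, and it is precisely here that the multiplicity of overlapping same-color images must be controlled by the choice of slots; I expect this to be the main obstacle.
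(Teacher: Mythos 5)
Your treatment of part (2) coincides with the paper's proof: one iterates part (1) over $j=1,\dots,m$, taking $U_0^{(1)}=\dots=U_n^{(1)}=U$, the reserve functions $R^{(j)}(x)=\sum_{i\geq j}\left|\{g\in D^2\colon g\cdot x\in V_i\}\right|$, and the shrunken targets $U_k^{(j+1)}=U_k^{(j)}\setminus B_k^{(j)}$ produced by conclusion (b); the identity $(n+1)\left|\{g\in D\colon g\cdot x\in U^{-\varepsilon}\}\right|=\sum_{k=0}^n\left|\{g\in D\colon g\cdot x\in U_k^{-\varepsilon}\}\right|$ starts the induction exactly as you say. That half is correct and is essentially verbatim what the paper does.

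For part (1) the situation is different: the paper gives no argument at all, stating only that it is proved exactly as Lemma~3.1 of \cite{Nar_polynomial_2021}, so your sketch has to stand on its own --- and as written it does not close. The two places where you invoke the hypothesis (bounding the slots \emph{blocked} during the construction, and bounding the slots \emph{destroyed} for conclusion (b)) both require an injection from the relevant set of slots $(g,k)$ at a point $x$ into $\{h\in D^2\colon h\cdot x\in V\}$. This injection is not automatic: the elements of an open cover of $A$ overlap, so a single $h\in D^2$ with $h\cdot x$ landing near $A$ can account for several slots at once --- one for each cover element, possibly of different colors, that meets the small set $h\cdot O$ --- and the naive estimate ``number of blocked slots at $x$ is at most $|\{h\in D^2\colon h\cdot x\in V\}|$'' fails without further structure on the cover and on the order in which pieces are processed. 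You correctly identify this multiplicity control as ``the main obstacle,'' but then leave it unresolved; it is not a technicality to be deferred, it is the actual content of the lemma (and of Naryshkin's Lemma~3.1, which sets up the cover and the assignment precisely so that distinct blocked or destroyed slots consume distinct elements of $D^2$ carrying $x$ into $V$). The same care is needed to fix $\widetilde\varepsilon$ \emph{before} the cover so that ``within $\widetilde\varepsilon$ of $B_k$'' genuinely traces back into $V$. Until that combinatorial core is supplied, part (1) --- and hence the whole lemma --- is not proved.
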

\begin{proof}
(1). This is proved exactly as 
\cite[Lemma 3.1]{Nar_polynomial_2021}. We omit the proof.

(2). We will prove this by repeatedly applying part (1).
For each $j=1,\ldots,m$, let $A_j\subseteq V_j$ be a closed subset.
Set $\varepsilon^{(1)}=\varepsilon$ and 
$U^{(1)}_0=\ldots=U^{(1)}_n=U$.
For $j=1,\ldots,m$, let $R^{(j)}\colon X\to [0,\infty)$ be given by
$R^{(j)}(x)=\sum_{i=j}^m \left|\{g\in D^2\colon g\cdot x\in V_i\}\right|$
for $x\in X$. By construction, we have
\[\left|\{g\in D^2\colon g\cdot x\in V_1\}\right|+R^{(1)}(x)<\sum_{k=0}^n\left|\big\{g\in D\colon g\cdot x\in (U^{(1)}_k)^{-\varepsilon^{(1)}}\big\}\right|\]
for all $x\in X$. By part (1), there exist $0<\varepsilon^{(2)}<\varepsilon^{(1)}$, an open cover
$\mathcal{O}^{(1)}$ of $A_1$, group elements $s_O^{(1)}\in G$,
for $O\in\mathcal{O}^{(1)}$, and a partition
$\mathcal{O}^{(1)}=\mathcal{O}^{(1)}_0\sqcup \ldots\sqcup \mathcal{O}^{(1)}_n$
satisfying the following properties:
\be\item[(a.1)] 
for every $k=0,\ldots,n$, 
the family $\{s_O\cdot O\colon O\in\mathcal{O}^{(1)}_k\}$
consists of pairwise disjoint subsets of $U^{(1)}_k$, 
\item[(b.1)] with $B^{(1)}_k$ denoting the 
closure of $\bigcup_{O\in\mathcal{O}^{(1)}_k} s_O\cdot O$
and $U^{(2)}_k=U^{(1)}_k\setminus B_k\subseteq U$,
we have
\[R^{(1)}(x)<\sum_{k=0}^n\left|\big\{g\in D\colon g\cdot x\in (U^{(2)}_k)^{-\varepsilon^{(2)}}\big\}\right|, \]
for all $x\in X$. 
\ee

By construction, we get 
\[\left|\{g\in D^2\colon g\cdot x\in V_2\}\right|+R^{(2)}(x)<\sum_{k=0}^n\left|\big\{g\in D\colon g\cdot x\in (U^{(2)}_k)^{-\varepsilon^{(2)}}\big\}\right|,\]
for all $x\in X$. One continues applying part~(1) inductively.
After $m$ steps, 
we will have constructed, for each $j=1,\ldots,m$, 
an open cover
$\mathcal{O}^{(j)}$ of $A_j$, group elements $s_O^{(j)}\in G$,
for $O\in\mathcal{O}^{(j)}$, and a partition
$\mathcal{O}^{(j)}=\mathcal{O}^{(j)}_0\sqcup \ldots\sqcup \mathcal{O}^{(j)}_n$ such that for every $k=0,\ldots,n$, 
the family $\{s_O\cdot O\colon O\in\mathcal{O}^{(j)}_k\}$
consists of pairwise disjoint subsets of $U^{(j)}_k\subseteq U$.
For $j=1,\ldots,m$, set
\[\mathcal{C}_j=\{(k,O)\colon k=1,\ldots,n, O\in\mathcal{O}^{(j)}_k\},\]
and note that
\[\mathcal{C}_1\sqcup \cdots \sqcup \mathcal{C}_n=\{(k,O)\colon k=1,\ldots,n, O\in\mathcal{O}^{(j)}_k, j=1,\ldots,m\}.\]

Since the sets $U^{(j)}_k$, for $j=1,\ldots,m$ are pairwise disjoint subsets of $U$,
it follows that the above choices
witness the fact that $(V_j)_{j=1}^m\prec_n U$, as desired.
\end{proof}


In the next lemma, note that we can not demand that the 
sets $eB_j$ for $e\in E$ \emph{and} $j=1,\ldots,m$ be 
pairwise disjoint, as this cannot happen if $K$
is amenable. 

\begin{lma}\label{lem:Cayley}
Let $K$ be a countable group and let $E\subseteq K$ be a finite symmetric subset containing the unit of $K$. 
Set $m=|E^2|$. Then there is a finite partition
\[K=B_1\sqcup \ldots\sqcup B_m, \]
such that for each $j=1,\ldots, m$, the sets $eB_j$, for $e\in E$, are pairwise disjoint.
\end{lma}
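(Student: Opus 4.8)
The plan is to reformulate the conclusion as a graph-colouring problem and then solve it by a greedy argument. Consider the graph $\Gamma$ with vertex set $K$ in which two \emph{distinct} vertices $x,y$ are joined by an edge precisely when $xy^{-1}\in E^2$. Since $E$ is symmetric we have $(E^2)^{-1}=E^2$, so this relation is symmetric and $\Gamma$ is a genuine (undirected) graph; moreover $1\in E\subseteq E^2$, so the set $E^2\setminus\{1\}$ has exactly $m-1$ elements. The neighbours of a vertex $x$ are exactly the elements $sx$ with $s\in E^2\setminus\{1\}$ (using symmetry to pass from $xy^{-1}$ to $y$), and this parametrisation is injective, so $\Gamma$ is $(m-1)$-regular.

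The key observation is that a partition $K=B_1\sqcup\dots\sqcup B_m$ has the property required in the statement if and only if each $B_j$ is an independent set of $\Gamma$, that is, the partition is a proper $m$-colouring. Indeed, for fixed $j$ one has $eB_j\cap e'B_j\neq\varnothing$ for some $e\neq e'$ in $E$ if and only if there are $x,y\in B_j$ with $ex=e'y$, which rearranges to $xy^{-1}=e^{-1}e'\in E^2\setminus\{1\}$ (here $e\neq e'$ forces $x\neq y$); conversely any edge of $\Gamma$ with endpoints $x,y\in B_j$ gives $xy^{-1}=s\in E^2\setminus\{1\}$, and writing $s=e^{-1}e'$ with $e,e'\in E$ (possible since $E^2=E^{-1}E$) and necessarily $e\neq e'$ produces the coincidence $ex=e'y$. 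Thus the two conditions are equivalent.

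It then remains to produce a proper $m$-colouring of $\Gamma$. Since $\Gamma$ is $(m-1)$-regular and $K$ is countable, I would enumerate $K=\{x_1,x_2,\dots\}$ and colour greedily: when colouring $x_n$, its already-coloured neighbours number at most $m-1$, hence use at most $m-1$ of the $m$ available colours, so a free colour always remains. (Equivalently, every finite subgraph of $\Gamma$ has maximum degree at most $m-1$ and is therefore $m$-colourable, and the De Bruijn--Erd\H{o}s theorem upgrades this to an $m$-colouring of all of $\Gamma$.) Setting $B_j$ to be the $j$-th colour class yields the desired partition; note that allowing some $B_j$ to be empty is harmless, since for an empty $B_j$ the disjointness of the $eB_j$ is vacuous.

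There is no serious obstacle here: the entire content is bookkeeping. The only points requiring care are verifying the left/right-translation computation underlying the equivalence in the second paragraph and confirming that the greedy colouring is legitimate on the infinite, but locally finite, graph $\Gamma$; both are routine.
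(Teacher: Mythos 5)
Your proof is correct and follows essentially the same route as the paper: both form the Cayley-type graph on $K$ with edge set determined by $E^2\setminus\{1\}$, observe it is $(m-1)$-regular, and produce a proper $m$-colouring by a greedy enumeration, with the colour classes serving as the $B_j$. The only difference is that you spell out the equivalence between independence of $B_j$ and pairwise disjointness of the translates $eB_j$ in more detail than the paper does; the argument is the same.
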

\begin{proof}
Consider the Cayley graph $\mathcal{G}=\mathrm{Cay}(K,E^2)$ whose vertices are the elements of $K$ and whose edges are of the form $(k,gk)$, for $k\in K$ and $g\in E^2\setminus\{1\}$.
Note that every vertex in $\mathcal{G}$ has exactly
$m-1$ edges coming out of it, and that there are no loops 
in $\mathcal{G}$.
The greedy coloring algorithm then implies that we can color 
the vertices of $\mathcal{G}$ using at most $m$ colors, 
in such a way that every two adjacent vertices have different 
colors\footnote{One way to do this is to enumerate the vertices and then color them inductively.}.
For $j=1,\ldots,m$, let $B_j\subseteq K$ denote the vertices 
with the $j$-th color. Then $B_1\sqcup \cdots\sqcup B_m=K$. Moreover, 
for $j=1,\ldots,m$, we have $gB_j\cap B_j=\emptyset$ for all 
$g\in E^2\setminus\{1\}$. Thus the sets $eB_j$, for $e\in E$, 
are pairwise disjoint for each $j=1,\ldots,m$.
\end{proof}




The following is the main result of this work. The main consequence
is the classifiability of the associated crossed products;
see \autoref{cor:PurInf}.
In its proof, we will work with doubly-indexed sets $V_{i,j}$ for 
$i=1,\ldots,n$ and $j=1,\ldots,m$. To lighten the notation, we will
write $(V_{i,j})_{i,j=1}^{n,m}$ for the tuple 
$(V_{i,j})_{i=1,\ldots,n, j=1,\ldots,m}$, and similarly for their
union $\bigcup_{i,j=1}^{n,m}$, or for sums $\sum_{i,j=1}^{n,m}$ 
indexed both by $i$ and $j$.

\begin{thm}\label{thm:groupstospaces}
Let $H$ be a countable group admitting paradoxical towers, let $K$ be any
countable group, and set $G=H\times K$. 
Then any amenable, minimal action of $G$ on a compact metrizable
space has dynamical comparison. 
\end{thm}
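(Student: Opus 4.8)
My first move is to strip the problem down to a single clean comparison. Since $H$ is nonamenable, so is $G=H\times K$, and because the action $G\acton X$ is amenable, \autoref{lem:noinvariantmeasures} guarantees that there are no $G$-invariant probability measures on $X$. By \autoref{lma:minimalComparison} it therefore suffices to prove dynamical $r$-comparison for a single $r$ — which I will arrange to depend only on the group data and not on the sets involved — and, as noted in the discussion before that lemma, by transitivity (\autoref{lma:transitive}) this reduces to showing $X\prec_r V$ for every nonempty open $V\subseteq X$. So the target of the whole argument is: produce a fixed $r$ such that $X\prec_r V$ always holds.

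\textbf{Transferring the paradoxicality to $G$ for product-form data.} The point is that $G=H\times K$ need not itself admit paradoxical towers, so I cannot apply the group-with-towers case directly; instead I build a substitute that is only required to behave well for \emph{product} test sets. Suppose $H$ admits $n$-paradoxical towers. I fix a finite symmetric $E\subseteq K$ containing the unit, set $m=|E^2|$, and use \autoref{lem:Cayley} to obtain a partition $K=B_1\sqcup\ldots\sqcup B_m$ with $eB_j$ pairwise disjoint over $e\in E$, for each fixed $j$. On the $H$-side I apply \autoref{lma:MoreTowers} to a finite symmetric $D_H\subseteq H$ to get $m$ jointly $D_H$-free covering families $A_i^{(j)}\subseteq H$, $g_i^{(j)}\in H$. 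Writing $\widetilde A_i^{(j)}=A_i^{(j)}\times B_j\subseteq G$ and $\gamma_i^{(j)}=(g_i^{(j)},1)$, I then check two things: the covering $G=\bigcup_{i,j}\gamma_i^{(j)}\widetilde A_i^{(j)}$ (from $H=\bigcup_i g_i^{(j)}A_i^{(j)}$ and $K=\bigsqcup_j B_j$), and the packing statement that the translates $(d,e)\widetilde A_i^{(j)}$ for $(d,e)\in D_H\times E$ are pairwise disjoint. The latter is exactly where the two colorings cooperate: distinct $(i,j)$ are separated by joint $D_H$-freeness in the first coordinate, while coinciding $(i,j)$ are separated either by $D_H$-freeness (if the $H$-parts differ) or by disjointness of $eB_j$ over $e\in E$ (if only the $K$-parts differ). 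Thus $G$ carries an $nm$-fold paradoxical-tower structure, valid for every product box $D=D_H\times E$, and $E$ may be taken as large as we like.

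\textbf{Amenability and the counting lemma.} With the combinatorics in place, I invoke amenability of the action to obtain a continuous $\mu\colon X\to\Prob(G)$ that is $(F,\delta)$-invariant for a large finite $F$ (containing the $\gamma_i^{(j)}$, the box $D$, and $D^2$) and small $\delta$; these approximately equivariant fields are the only available surrogate for the nonexistent invariant measure. Using the tower data together with $\mu$ and a small open $Y\subseteq V$, I define open sets $V_{i,j}\subseteq X$ (the natural candidates being regions where $\mu(x)$ concentrates on $\gamma_i^{(j)}\widetilde A_i^{(j)}$, intersected with suitable translates of $Y$). The covering condition, evaluated against $\mu(x)$ via $\sum_{i,j}\mu(x)\big(\gamma_i^{(j)}\widetilde A_i^{(j)}\big)\ge 1$, is designed to yield $X\prec (V_{i,j})_{i,j}$. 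The packing condition, combined with the approximate invariance of $\mu$ (to interchange $g\cdot\mu(x)$ with $\mu(g\cdot x)$) and the bounded multiplicity $m$ coming from the Cayley coloring, is designed to yield the pointwise estimate $\sum_{i,j}\big|\{g\in D^2\colon g\cdot x\in V_{i,j}\}\big|<(n+1)\big|\{g\in D\colon g\cdot x\in V^{-\varepsilon}\}\big|$ for all $x$, which by \autoref{lma:Petr}(2) gives $(V_{i,j})_{i,j}\prec_n V$. Composing $X\prec (V_{i,j})_{i,j}\prec_n V$ as in the remark following \autoref{lma:transitive} produces $X\prec_n V$, and \autoref{lma:minimalComparison} upgrades this to $X\prec V$, i.e.\ dynamical comparison.

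\textbf{Expected main obstacle.} The delicate step is the uniform verification of the counting inequality feeding \autoref{lma:Petr}(2). The $H$-direction is genuinely paradoxical (honest disjointness from $D_H$-freeness), whereas the $K$-direction is only boundedly disjoint with multiplicity $m$, and the two must be reconciled \emph{without} inflating the number of colors — keeping $r=n$ rather than letting it grow like $(n+1)m-1$, which is precisely the payoff of phrasing $X\prec (V_{i,j})_{i,j}$ as a tuple subequivalence rather than comparing against $\bigcup_{i,j}V_{i,j}$. Because $G$ is nonamenable there is no group Følner set, so the crucial quantitative fact I must extract from amenability of the action is that passing from $D$ to $D^2$ costs only a bounded factor on the relevant counts (absorbable into the constants $n+1$ and $m$), rather than an exponential blow-up. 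Finally, I must ensure throughout that the resulting $r$ depends only on $n$ and $m=|E^2|$, and in particular not on the target $V$, so that the single-$r$ hypothesis of \autoref{lma:minimalComparison} is genuinely met.
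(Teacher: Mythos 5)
Your proposal follows essentially the same route as the paper's proof: the same reduction via \autoref{lem:noinvariantmeasures} and \autoref{lma:minimalComparison}, the same product towers $A_i^{(j)}\times B_j$ built from \autoref{lma:MoreTowers} and \autoref{lem:Cayley}, the same measure-concentration sets $V_{i,j}$, and the same composition $X\prec (V_{i,j})\prec_n U$ via \autoref{lma:Petr}(2) and \autoref{lma:transitive}. The only point worth flagging is that the lower bound making the counting inequality work (at least $m$ elements of $F=D\times E$ sending any $x$ into $U^{-\varepsilon}$) comes from \emph{minimality} — choosing $D$ to contain $m$ disjoint translates of $D_0$ where $F_0^{-1}\cdot U^{-\varepsilon}=X$ — rather than from amenability as your closing paragraph suggests.
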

\begin{proof}

Let $X$ be a compact metrizable space and let 
$G\acton X$ be an amenable, minimal action. 
Let $n\in\mathbb{N}$ be such that $H$ admits $n$-paradoxical towers. By \autoref{lem:noinvariantmeasures} and \autoref{lma:minimalComparison}, 
and since $G$ is nonamenable, it suffices to show that $G\acton X$ has 
dynamical $n$-comparison. Let $U\subseteq X$ be a nonempty open set. 
Fix a metric on $X$ inducing its topology, and 
choose $\varepsilon>0$ such that $U^{-\varepsilon}\not=\emptyset$. By 
minimality of $G\acton X$, there is a finite set $F_0\subseteq G$
such that $F_0^{-1}\cdot U^{-\varepsilon}=X$. Without loss of generality, we
assume that $F_0$ contains the unit of $G$, and has the form $F_0=D_0\times E$ 
for finite sets $D_0\subseteq H$ and $E\subseteq K$ with $E=E^{-1}$.
Set $m=|E^4|$.

Since $H$ is infinite, we can find 
$t_1,\ldots,t_m \in H$ such that
the sets $D_0t_j$, for $j=1,\ldots,m$, are pairwise
disjoint. 
Let $D$ be any finite symmetric subset of $H$
containing $\bigsqcup_{j=1}^mD_0t_j$, 
set $F=D\times E$, which is finite and symmetric. 
With $s_j=(t_j,1)\in G$, 
note that the sets $F_0s_j $ are pairwise disjoint
and contained in $F$.
\vspace{.1cm}

\textbf{Claim~1:} \emph{for all $x\in X$, we have}
\begin{equation}\label{eq:coverUktimes}\tag{3.2}
\left|\left\{g\in F\colon g\cdot x\in U^{-\varepsilon}\right\}\right|\geq m.
\end{equation}

To prove the claim, fix $x\in X$ and $j=1,\ldots,m$. 
Denote by $F_x$ the set in the left-hand side of the displayed equation above.
Since 
\[s_j^{-1}\underbrace{F_0^{-1}\cdot U^{-\varepsilon}}_{=X}=X,\] 
there is $f_j\in F_0$ such that $f_js_j\cdot x\in U^{-\varepsilon}$, 
and thus $f_js_j$ belongs to $F_x$ (in addition to $F_0s_j$). 
Hence $|F_x\cap F_0s_j|\geq 1$ for all $j=1,\ldots,m$, 
and since the sets $F_0s_j$ are pairwise
disjoint, this shows that $|F_x|\geq m$, as desired.

\vspace{.1cm}

Since $H$ admits $n$-paradoxical towers, use \autoref{lma:MoreTowers}, 
to find
$A_i^{(j)}\subseteq H$ and $h_i^{(j)}\in H$, for 
$i=1,\ldots,n$ and $j=1,\ldots,m$
satisfying:
\begin{enumerate}
\item[(a.1)] the sets $dA^{(j)}_i$, for $d\in D^2$, $i=1,\ldots,n$,
and $j=1,\ldots,m$, are pairwise disjoint.
\item[(a.2)] $\bigcup_{i=1}^{n}h^{(j)}_iA^{(j)}_i=H$ for every $j=1,\ldots,m$.
\end{enumerate}
Use \autoref{lem:Cayley}, with $E^2$ in place of $E$, to find subsets 
$B_1,\ldots,B_m\subseteq K$ such that 
\be\item[(b.1)] for each $j=1,\dotsc,m$, the sets $e B_j$, for $e\in E^2$, 
are pairwise disjoint.
\item[(b.2)] $K=B_1\sqcup\ldots\sqcup B_m$. \ee

For $i=1,\ldots,n$ and $j=1,\ldots,m$, set 
\[C_{i,j}=  A_i^{(j)}\times B_j \subseteq G \ \ \mbox{ and } \ \
g_{i,j}=(h_i^{(j)},1)\in G.\]
We proceed to show the following: 
\begin{enumerate}
\item[(i)] The sets $fC_{i,j}$ for $f\in F^2$, $i=1,\dotsc,n$, 
and $j=1,\dotsc,m$, are pairwise disjoint. 
\item[(ii)] $\bigcup_{i,j=1}^{n,m} g_{i,j}C_{i,j}=G$.
\end{enumerate}

To check (i), let $i,i'=1,\ldots,n$, let $j,j'=1,\ldots,m$, 
and let $f,f'\in F^2$. Write $\pi_H\colon G\to H$ for the projection
onto the first coordinate, and $\pi_K\colon G\to K$ for the projection
onto the second one. Assume that 
\begin{equation}\label{eq:Ffree}\tag{3.3}
fC_{i,j}\cap f'C_{i',j'}\neq \emptyset.
\end{equation}
Apply $\pi_H$ to \eqref{eq:Ffree} to get 
$\pi_H(f)A_i^{(j)}\cap \pi_H(f')A_{i'}^{(j')}\neq\emptyset$. 
Since $\pi_H(F^2)= D^2$, condition (a.1) above implies 
that $i=i'$, $j=j'$ and $\pi_H(f)=\pi_H(f')$. Applying $\pi_K$ to 
\eqref{eq:Ffree} now gives 
$\pi_K(f)B_j\cap \pi_K(f')B_j\neq \emptyset$. 
Since $\pi_K(F^2)=E^2$, it thus follows from (b.1) above that 
$\pi_K(f)=\pi_K(f')$ and thus $f=f'$. This proves (i).
Part (ii) is immediate from (a.2) and (b.2).
\vspace{.2cm}

Now fix $0<\delta <(2nm(nm+1))^{-1}$. Use amenability of 
$G\acton X$ to find a continuous map 
$\mu\colon X\to \Prob(G)$ satisfying
\begin{equation}\label{eq:almostinvariant}\tag{3.4}
\sup_{x\in X}\|\mu(g\cdot x)-g\cdot \mu(x)\|_1<\delta
\end{equation}
for all $g\in F^2\cup\{g_{i,j}\}_{i,j=1}^{n,m}$.
For $i=1,\ldots,n$ and $j=1,\dotsc,m$, set
\[V_{i,j}= \Big\lbrace x\in X\colon \mu(x)(C_{i,j})>\frac{1}{nm+1} +\delta\Big\rbrace, \]
and note that $V_{i,j}$ is an open subset of $X$.
\vspace{.1cm}

\textbf{Claim~2:} \emph{we have $X\prec (V_{i,j})_{i,j=1}^{n,m}$.}
For $i=1,\ldots,n$ and 
$j=1,\ldots,m$, define
\[W_{i,j}=\left\lbrace x\in X\colon \mu(x)(g_{i,j}C_{i,j})>\frac{1}{nm+1} +2\delta\right\rbrace.\]
Then $W_{i,j}$ is open in $X$. Fix $x\in X$.
Since $\mu(x)$ is a probability measure
on $G$, by condition (ii) there are $i_x\in \{1,\ldots,n\}$ and 
$j_x\in \{1,\ldots,m\}$ such that 
\[\mu(x)(g_{i_x,j_x}C_{i_x,j_x})\geq \frac{1}{nm}>\frac{1}{nm+1}+2\delta.\] 
In other words, $x\in W_{i_x,j_x}$, and thus $X=\bigcup_{i,j=1}^{n,m} W_{i,j}$. 
Using \eqref{eq:almostinvariant} again, we get
\[g_{i,j}^{-1}W_{i,j}\subseteq \left\lbrace x\in X\colon \mu(x)( C_{i,j})>\frac{1}{nm+1}+\delta\right\rbrace=V_{i,j}\]
for $i=1,\ldots,n$ and $j=1,\dotsc,m$.
This shows that $X\prec (V_{i,j})_{i,j=1}^{n,m}$, as desired.
\vspace{.1cm}

\textbf{Claim~3:} \emph{we have $(V_{i,j})_{i,j=1}^{n,m}\prec_n U$.}
To prove the claim, note that 
by \eqref{eq:almostinvariant} and the fact that $F=D\times E$ is
symmetric, we have 
\[fV_{i,j}\subseteq \left\lbrace x\in X\colon \mu(x)(fC_{i,j})>\frac{1}{nm+1}\right\rbrace\]
for all $f\in F^2$, $i=1,\ldots,n$, and $j=1,\dotsc,m$.
Since the sets $fC_{i,j}$ are pairwise disjoint by
condition (i) above, for any $x\in X$ at most $nm$ of 
them can have $\mu(x)$-measure more than 
$\frac{1}{nm+1}$. We deduce that each $x\in X$
belongs to at most $nm$ of the sets $fV_{i,j}$, 
for $f\in F^2$, $i=1,\ldots,n$ and $j=1,\ldots,m$.
That is, for all $x\in X$, we get
\[\sum_{i,j=1}^{n,m}\left|\{f\in F^2\colon f\cdot x\in V_{i,j}\}\right|\leq nm \stackrel{\eqref{eq:coverUktimes}}{<}(n+1)\left|\{f\in F\colon f\cdot x\in U^{-\varepsilon}\}\right|. \]
By part~(2) of \autoref{lma:Petr}, we conclude that 
$(V_{i,j})_{i,j=1}^{n,m}\prec_n U$. 
\vspace{.1cm}

Combining Claims 2 and 3, we get $X\prec (V_{i,j})_{i,j=1}^{n,m}\prec_n U$,
which implies $X\prec_n U$ by
\autoref{lma:transitive}. This concludes the proof. 
\end{proof}

\begin{rem}
The above proof does not show that $H\times K$ admits paradoxical towers. Indeed, 
although the sets $C_{i,j}$ satisfy the conditions for $nm$-paradoxical towers, 
the number $m$ depends on the finite subset $E\subseteq K$. In fact, one can show 
that $G=H\times K$ never has paradoxical towers if
$K$ is infinite and amenable; see \autoref{eg:F2Z}.
\end{rem}

By \cite[Theorem~6.11]{RorSie_purely_2012}, 
every exact nonamenable group admits a large family of amenable, 
minimal, free actions on compact metric spaces. In particular, 
actions satisfying the assumptions of \autoref{thm:groupstospaces}
always exist. 

We obtain the following corollary:

\begin{cor}\label{cor:PurInf}
Let $H$ be a group admitting paradoxical towers, let $K$ be any countable group, and 
set $G=H\times K$. Let $G\acton X$ be an amenable, minimal and topologically free action on a compact metrizable space $X$. Then the crossed product $C(X)\rtimes G$ is a Kirchberg
algebra satisfying the UCT.
\end{cor}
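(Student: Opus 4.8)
The plan is to verify each of the defining properties of a Kirchberg algebra in the UCT class separately, reducing most of them to results already established in this paper and in the literature. The essential input is \autoref{thm:groupstospaces}, which supplies dynamical comparison; once that theorem is available, this corollary is merely an assembly of standard facts, and I expect no genuine difficulty in the argument itself. The real obstacle lies entirely in establishing \autoref{thm:groupstospaces}.

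First I would observe that $G=H\times K$ is nonamenable. Indeed, since $H$ admits paradoxical towers it is nonamenable (as noted just after \autoref{df:ParadoxicalTower}), and $H$ embeds as a subgroup of $G$; as subgroups of amenable groups are amenable, $G$ cannot be amenable. Because the action $G\acton X$ is amenable while $G$ is nonamenable, \autoref{lem:noinvariantmeasures} then guarantees that $X$ carries no $G$-invariant probability measure. Next, I would apply \autoref{thm:groupstospaces} to $G=H\times K$: since $H$ admits paradoxical towers and the action $G\acton X$ is amenable and minimal, it has dynamical comparison.

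With these two facts in hand, all hypotheses of Ma's theorem \autoref{thm:Ma} are met: the action $G\acton X$ is minimal and topologically free, it has dynamical comparison, and it admits no invariant probability measure. I would therefore conclude that $C(X)\rtimes_r G$ is simple and purely infinite. The remaining structural properties are routine to record. Separability is automatic, since $X$ is compact metrizable (so that $C(X)$ is separable) and $G$ is countable. Nuclearity follows from amenability of the action, which, as recalled after \autoref{df:AmenAction}, also forces the full and reduced crossed products to coincide, so that I may write $C(X)\rtimes G$ unambiguously. Finally, amenability of the action yields the UCT by \cite[Theorem~10.9]{Tu_conjecture_1999}. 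Collecting everything, $C(X)\rtimes G$ is a separable, simple, nuclear, purely infinite $C^*$-algebra satisfying the UCT, i.e.\ a Kirchberg algebra in the UCT class, as claimed.
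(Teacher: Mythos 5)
Your proposal is correct and follows essentially the same route as the paper: combine \autoref{thm:groupstospaces} with \autoref{thm:Ma} for simplicity and pure infiniteness, invoke \cite[Theorem~10.9]{Tu_conjecture_1999} for the UCT, and record the standard facts (separability, unitality, nuclearity) coming from amenability of the action. You merely spell out the intermediate steps (nonamenability of $G$, absence of invariant measures) that the paper leaves implicit.
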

\begin{proof} 
It is well known that $C(X)\rtimes G$ is simple, separable, unital and nuclear, 
and it satisfies the UCT by \cite[Theorem~10.9]{Tu_conjecture_1999}. Finally, it is purely infinite
by the combination of \autoref{thm:groupstospaces} and \autoref{thm:Ma}.
\end{proof}

\autoref{cor:PurInf} reveals an unexpected phenomenon 
in the nonamenable
setting: classifiability of $C(X)\rtimes G$, for the 
groups $G$ to which the corollary applies, 
does not require any finite dimensionality assumption 
on $X$, or any version of mean dimension zero. 
There is thus a genuine difference between the 
amenable and the nonamenable case.

It is an interesting and challenging problem to compute the possible
$K$-groups of Kirchberg algebras arising as in \autoref{cor:PurInf}.
Some progress in this direction has been made in \cite{EllSie_ktheory_2016}.

For a nonamenable group $G$, 
a simple, nuclear crossed 
product of the form $C(X)\rtimes G$ 
cannot be stably finite by \autoref{lem:noinvariantmeasures},
but we do not know if it can ever be finite if $G$ 
is not covered by \autoref{cor:PurInf} (although 
Conjecture~\ref{cnj:MinAmenActComp} predicts that this can never happen).
Using a weak version of paradoxical towers, we show below 
that if $G$ contains $\mathbb{F}_2$, then a 
simple, nuclear crossed product $C(X)\rtimes G$ 
is automatically properly infinite; see \autoref{prop:PropInf}.
Recall (see \cite[Definition~1.1]{Ror_classification_2002}) 
that a unital $C^*$-algebra $A$ is 
\emph{properly infinite} if there exist two mutually
orthogonal projections in $A$, 
each of which is Murray-von Neumann 
equivalent to the unit.

\begin{thm}\label{prop:PropInf}
Let $G$ be a countable group containing a nonabelian free group.
If $G\acton X$ is an amenable, minimal and topologically
free action on a compact metrizable space, then 
$C(X)\rtimes G$ is a properly infinite, simple, separable,
nuclear, unital $C^*$-algebra.
\end{thm}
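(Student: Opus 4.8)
The plan is to establish proper infiniteness by finding, inside $C(X)\rtimes G$, two mutually orthogonal projections each Murray-von Neumann equivalent to the unit $1_{C(X)}$. The key observation is that proper infiniteness is a much weaker conclusion than pure infiniteness, so I do not need the full strength of dynamical comparison; instead, I will exploit a ``weak paradoxical tower'' coming directly from the free subgroup $\mathbb{F}_2\leq G$. The strategy is to produce, for the nonabelian free group $\mathbb{F}_2$, two sets $A_1,A_2\subseteq \mathbb{F}_2$ together with group elements such that $\mathbb{F}_2 = g_1A_1 = g_2A_2$ (each $A_i$ covers all of $\mathbb{F}_2$ after a single translate) while $A_1\cap A_2=\emptyset$ and the translates used to verify disjointness involve a controlled finite set. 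This is the classical Banach--Tarski-style paradoxical decomposition of $\mathbb{F}_2$: writing $\mathbb{F}_2=\langle a,b\rangle$ and $W(x)$ for the set of reduced words beginning with $x$, one has $\mathbb{F}_2 = W(a)\sqcup W(a^{-1})\sqcup W(b)\sqcup W(b^{-1})\sqcup\{1\}$ and the identities $\mathbb{F}_2 = W(a)\cup aW(a^{-1})$ and $\mathbb{F}_2 = W(b)\cup bW(b^{-1})$, which give two disjoint subsets $A_1 = W(a)$ and $A_2 = W(b)$ whose translates each exhaust the group.

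First I would fix the nonabelian free subgroup $\mathbb{F}_2\leq G$ and translate the paradoxical data above through the action to the space level, exactly as in the proof of \autoref{thm:groupstospaces}. Using amenability of $G\acton X$, I obtain a continuous almost-invariant map $\mu\colon X\to\Prob(G)$ for a suitable finite symmetric set, and I define open sets $V_1, V_2\subseteq X$ by thresholding $\mu(x)$ against the sets $A_1,A_2$ (pushed forward into $G$ via the inclusion $\mathbb{F}_2\hookrightarrow G$). The paradoxicality gives, on one hand, $X\prec V_i$ for each $i=1,2$ (each $A_i$ covers $G$ after translation), and on the other hand the disjointness $A_1\cap A_2=\emptyset$ yields that $V_1$ and $V_2$ can be realized disjointly. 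Concretely, I expect to arrange open sets $U_1,U_2\subseteq X$ with $\overline{U_1}\cap\overline{U_2}=\emptyset$, together with $X\prec U_i$, so that by \cite[Lemma~12.3]{Ker_dimension_2020} one gets $1_{C(X)}\precsim g_i$ in $C(X)\rtimes G$ for positive functions $g_i$ supported on $U_i$.

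The next step is to convert these two disjoint subequivalences into genuine orthogonal projections. Since $1_{C(X)}\precsim g_i$ and $g_1,g_2$ have disjoint supports, I would use the functional-calculus/Cuntz-semigroup machinery (as in the proof of \autoref{thm:Ma}) to produce two mutually orthogonal positive elements $a_1, a_2$ in $C(X)\rtimes G$, each Cuntz-dominating $1_{C(X)}$, with $a_1 a_2 = 0$. Because $1_{C(X)}$ is a projection and $1_{C(X)}\precsim a_i$, standard Cuntz-semigroup arguments give partial isometries implementing Murray--von Neumann subequivalences; combined with $1_{C(X)}\precsim a_i \precsim 1_{C(X)}$ (the latter automatic since everything sits under the unit), one upgrades these to honest projections $p_1, p_2$ with $p_1 p_2 = 0$ and $p_i \sim 1_{C(X)} = 1_{C(X)\rtimes G}$. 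This exhibits two orthogonal subunits equivalent to the unit, which is precisely proper infiniteness. Simplicity, separability, nuclearity, and unitality follow exactly as in \autoref{cor:PurInf}, from \cite[Theorem~2]{ArcSpi_topologically_1994}, amenability of the action, and compactness of $X$.

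The main obstacle, I expect, lies in the combinatorial bookkeeping of the weak paradoxical decomposition: I need the translate-disjointness of $A_1,A_2$ to survive the thresholding construction so that the resulting open sets $U_1,U_2$ are genuinely \emph{disjoint} (not merely that each is subequivalent to the whole space). This is precisely where proper infiniteness differs from full pure infiniteness, and where a ``weak'' version of paradoxical towers suffices: I do not need the $D$-freeness hypothesis (1) of \autoref{df:ParadoxicalTower} in full, only enough disjointness to separate two copies. Once the two disjoint open sets with $X\prec U_i$ are in hand, the passage to orthogonal projections equivalent to the unit is routine Cuntz-semigroup manipulation, and the remaining structural properties are immediate.
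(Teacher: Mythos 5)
There is a genuine gap, and it sits exactly at the point you yourself flag as the ``main obstacle.'' Your combinatorial starting point is false: a left translate of a proper subset of a group is again a proper subset, so you can never have $\mathbb{F}_2=g_1A_1$ with $A_1=W(a)$ (indeed $aW(a^{-1})=\mathbb{F}_2\setminus W(a)$, so the classical decomposition needs \emph{two} pieces to reconstitute each copy of the group). Consequently the claim ``$X\prec V_i$ for each $i=1,2$'' does not follow. If you run the thresholding argument with an honest $2$-paradoxical decomposition $G=g_1A_1\cup g_2A_2$, $A_1\cap A_2=\emptyset$, you face a dichotomy: to make the target sets $V_i=\{x\colon\mu(x)(A_i)>c\}$ pairwise disjoint you need the cutoff $c>\tfrac12$, but to guarantee that the sets $W_i=\{x\colon\mu(x)(g_iA_i)>c+2\delta\}$ cover $X$ you only know $\max_i\mu(x)(g_iA_i)\geq\tfrac12$, so you need $c\leq\tfrac12-2\delta$. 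You therefore only obtain $X\prec(V_1,V_2)$ as a tuple, i.e.\ $X\prec_1 V_1\cup V_2$, which shows proper infiniteness of $M_2(C(X)\rtimes G)$ but not of $C(X)\rtimes G$. This is precisely the obstruction the paper points out in the remark following the theorem.

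The paper's repair is the strengthened form of \autoref{prop:F2}: \emph{three} pairwise $D$-disjoint towers $A_1,A_2,A_3\subseteq\mathbb{F}_2$ whose translates $g_jA_j$ have pairwise disjoint \emph{complements}, so that every group element lies in at least two of the sets $g_jA_j$. Then $\sum_j\mu(x)(G\setminus g_jA_j)\leq 1$ forces $\max_j\mu(x)(g_jA_j)\geq\tfrac23$, which lets one take the covering threshold above $\tfrac12$ while the disjointness of the $A_j$ keeps the targets $V_j$ pairwise disjoint; together with $hV\cap V=\emptyset$ this yields $X\prec V$ for a proper open set $V$. (The towers are then transported from $\mathbb{F}_2$ to $G$ by choosing coset representatives, a step you also need and do not address.) From there the paper builds an explicit isometry $v=\sum_j\alpha_{g_j}(f_j^{1/2})u_{g_j}$ with $E(vv^*)$ supported in $V\neq X$ and invokes Cuntz's criterion; your proposed ending via two orthogonal positive elements each Cuntz-dominating the unit would be an acceptable alternative, but the dynamical input it requires ($X\prec U_1$ and $X\prec U_2$ with disjoint $U_i$, or at least $X\prec V$ with $V\neq X$) is exactly what your construction fails to produce.
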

\begin{proof}
We only need to show that $C(X)\rtimes G$ is properly
infinite.
By \cite[Proposition~2.2]{Cun_structure_1977}, 
it suffices to show that there is an isometry
in $C(X)\rtimes G$ which is not a unitary. 
Let $H\subseteq G$ be a nonabelian free subgroup of rank 2.
Let $h\in H\setminus\{1\}$ and set $D=\{1,h\}$.
By \autoref{prop:F2},
there exist nonempty sets
$B_1,B_2,B_3\subseteq H$ and $h_1,h_2,h_3\in H$ such 
that 
\begin{enumerate}
\item[(1)] the sets $dB_j$, for $d\in D$ and $j=1,2,3$, are pairwise disjoint, and
\item[(2)] the sets $H\setminus h_jB_j$, for $j=1,2,3$, are pairwise
disjoint.
\end{enumerate}
Observe that the above conditions imply
\begin{enumerate}
\item[(3)] $h_1, h_2, h_3$ are pairwise distinct.
\end{enumerate}
Indeed, if $h_j=h_k$ with $j\neq k$, then $h_jB_j$ and $h_jB_k$ are disjoint sets
with disjoint complements. This implies that $G=h_jB_j\sqcup h_jB_j$, and 
hence $G=B_j\sqcup B_k$, which
contradicts the fact that $B_1,B_2,B_3$ are pairwise disjoint and nonempty. 

Let $S\subseteq G$ be a set containing exactly one representative 
of each right coset in $H\backslash G$, so that
$G=\bigsqcup_{s\in S}H s$. For $j=1,2,3$, set  $A_j=\bigsqcup_{s\in S}B_j s$. We claim that
\begin{enumerate}
\item[(a)] the sets $dA_j$, for $d\in D$ and $j=1,2,3$, are pairwise disjoint, and
\item[(b)] the sets $G\setminus h_jA_j$, for $j=1,2,3$, are pairwise
disjoint.
\end{enumerate}
To see (a), let $d,e\in D$ and let $j,k\in \{1,2,3\}$.
Using that $Hs\cap Ht=\emptyset$ for $s,t\in S$ with $s\neq t$, we get 
\[dA_j\cap eA_k=\Big(\bigsqcup_{s\in S}dB_j s\Big) \cap \Big(\bigsqcup_{t\in S}eB_k t\Big)=
\bigsqcup_{s,t\in S} \underbrace{dB_j s \cap eB_kt}_{\subseteq Hs\cap Ht}= \bigsqcup_{s\in S} (dB_j \cap eB_k)s.\]
If the above intersection is nonempty, then we must 
have $dB_j\cap eB_k\neq \emptyset$, which implies
$d=e$ and $j=k$ by (1). To check condition (b), let $j,k\in \{1,2,3\}$.
Arguing as above, we have
\begin{align*}(G\setminus h_jA_j)\cap (G\setminus h_kA_k)&=
\Big(\bigsqcup_{s\in S}(Hs\setminus h_jB_j s) \Big) \cap \Big(\bigsqcup_{t\in S}(Ht\setminus h_kB_k t)\Big)\\
&=
\bigsqcup_{s\in S}\big((H\setminus h_jB_j)\cap (H\setminus h_kB_k)\big)s,\end{align*}
which is empty if $j\neq k$ by (2).


Set $\varepsilon=\frac{1}{24}$, and use amenability
of $G\acton X$ to find a continuous function 
$\mu\colon X\to \mathrm{Prob}(G)$ satisfying
\begin{equation}\label{eq:measure}\tag{3.5}
\sup_{x\in X}\|\mu(g\cdot x)-g\cdot \mu(x)\|_1<\varepsilon
\end{equation}
for all $g\in \{h^{-1}, h_1, h_2, h_3\}$.
For $j=1,2,3$, set 
\[V_j=\Big\{x\in X\colon \mu(x)(A_j)>\frac{1}{2}+\varepsilon\Big\},\]
and define $V=V_1\cup V_2\cup V_3$.
By \eqref{eq:measure}, for $d\in D=\{1,h\}$ 
we have 
\[dV_j\subseteq \Big\{x\in X\colon \mu(x)(dA_j)>\frac{1}{2}\Big\},\]
for $j=1,2,3$. Since the sets 
$dA_j$, for $d\in D$ and $j=1,2,3$,
are pairwise disjoint, it follows that 
for each $x\in X$ at most one of these sets can 
have $\mu(x)$-measure more than $\frac{1}{2}$. 
We deduce that the sets $dV_j$, 
for $d\in D$ and $j=1,2,3$, are pairwise disjoint.
In particular, we have:
\be\item[(i)] $V=V_1\sqcup V_2\sqcup V_3$.  
\item[(ii)] $V\neq X$, since $hV\cap V=\emptyset$.
\ee

We will now show that $X\prec V$.
For $j=1,2,3$, set 
\[W_j=\Big\{x\in X\colon \mu(x)(G\setminus h_jA_j)<\frac{1}{2}-2\varepsilon\Big\}.\]
We claim that 
\be\item[(iii)]$W_1\cup W_2\cup W_3=X$.
\item[(iv)] $h_j^{-1}W_j\subseteq V_j$ for $j=1,2,3$. \ee

Note that $\frac{1}{2}-2\varepsilon>\frac{1}{3}$. 
By condition (b) above, 
for every $x\in X$ at least one of either 
$G\setminus h_1A_1$, $G\setminus h_2A_2$ or $G\setminus h_3A_3$ must 
have $\mu(x)$-measure less than $\frac{1}{2}-2\ep$.
This implies (iii). To show (iv), fix $j=1,2,3$.
Given $x\in X$, the fact that $\mu(x)$ is a probability
measure on $G$ implies that 
\[\mu(x)(G\setminus A_j)=1-\mu(x)(A_j).\]
Using the above at the second step, we get
\begin{align*}
h_j^{-1}W_j&\stackrel{\eqref{eq:measure}}{\subseteq}\Big\{x\in X\colon \mu(x)(G\setminus A_j)<\frac{1}{2}-\varepsilon\Big\}\\
& =
 \Big\{x\in X\colon \mu(x)(A_j)>\frac{1}{2}+\varepsilon\Big\}=V_j,
\end{align*}
as desired.

To simplify the notation, we set $g_j=h_j^{-1}$
for $j=1,2,3$.
Let $f_1,f_2,f_3\in C(X)$
be a partition of unity subordinate to the open cover $\{W_1,W_2,W_3\}$ of $X$; see (iii) above.
Denote by $\alpha$ the action of $G$ on $C(X)$
induced by the given action $G\acton X$.
For $g\in G$, we write $u_g\in C(X)\rtimes G$ for the 
canonical unitary satisfying 
$u_gf=\alpha_g(f)u_g$ for all $f\in C(X)$.
We denote by $E\colon C(X)\rtimes G\to C(X)$ the 
canonical conditional expectation, which is determined
by $E(u_g)=0$ whenever $g\in G\setminus\{1\}$.

Set $v=\sum_{j=1}^3 \alpha_{g_j}(f_j^{1/2})u_{g_j}$, and 
note that $v=\sum_{j=1}^3 u_{g_j}f_j^{1/2}$. 
Since $\alpha_{g_j}(f_j^{1/2})$ is supported on 
$g_jW_j$ and $g_jW_j\cap g_kW_k=\emptyset$ whenever $j\neq k$ by (iv) and (i) above,
we have
\begin{equation}\label{eqn:orth}\tag{3.6}
\alpha_{g_j}(f_j^{1/2})\alpha_{g_k}(f_k^{1/2})=0
\end{equation}
whenever $j\neq k$. Using this, we get
\begin{align*}
v^*v&= \sum_{j,k=1}^3 u_{g_j}^*\alpha_{g_j}(f_j^{1/2})\alpha_{g_k}(f_k^{1/2})u_{g_k}\\
&\stackrel{\eqref{eqn:orth}}{=}
\sum_{j=1}^3 u_{g_j}^*\alpha_{g_j}(f_j)u_{g_j}
=\sum_{j=1}^3 f_j=1.
\end{align*}
Thus $v$ is an isometry. On the other hand, we have
\begin{align*}
vv^*= \sum_{j,k=1}^3 u_{g_j}f_j^{1/2}f_k^{1/2}u_{g_k}^*
=\sum_{j,k=1}^3 \alpha_{g_j}(f_j^{1/2}f_k^{1/2})u_{g_jg_k^{-1}}.
\end{align*} 
We will show that $vv^*\neq 1$ by showing that $E(vv^*)\neq 1$.
We apply $E$ to 
the expression above and use (3) to get
\[E(vv^*)=\sum_{j=1}^3 \alpha_{g_j}(f_j).\]
In particular, $E(vv^*)$ 
is supported on $\bigsqcup_{j=1}^3 g_jW_j\subseteq V$. 
Since $V\neq X$ by (ii), the above expression cannot equal 1,
and hence $v$ is not a unitary, as desired.
\end{proof}

We point out that the argument used in the theorem above is 
somewhat different from the one used to prove 
\autoref{thm:groupstospaces}. Indeed, the reasoning used in 
\autoref{thm:groupstospaces} would only give the existence of a
nontrivial open set $V$ satisfying $X\prec_1 V$, 
which suffices to show infiniteness of $M_2(C(X)\rtimes G)$,
but not of $C(X)\rtimes G$. In order to obtain $X\prec_0 V$,
we need the strengthening of 2-paradoxical towers proved for 
$\mathbb{F}_2$ in \autoref{prop:F2}.

\section{Examples of groups with paradoxical towers}\label{sec:examples}

In this section we exhibit large classes of nonamenable
groups which admit paradoxical towers; see Theorem~\ref{thm:introExamples}
in the introduction. 
In addition to proving some preservation properties 
for the class
of groups admitting paradoxical towers,
the main 
tool to construct such groups is given in 
\autoref{thm:BdryParadTowers}, where we
show that one can produce paradoxical towers in groups 
admitting \emph{some} 
topologically free $n$-filling action on a completely 
metrizable (but not necessarily locally compact) space. 
Using this, we give several concrete and explicit examples. 

We begin by looking at extensions of groups with paradoxical
towers, both by finite groups (\autoref{lem:finiteNormalSubgroup}
and by other groups with paradoxical towers (\autoref{lma:ExtParad}).

\begin{prop}\label{lem:finiteNormalSubgroup}
Let $n\in\N$, let $G$ be a group, and let $K\leq G$ a finite 
normal subgroup such that $G/K$ has $n$-paradoxical towers. 
Then $G$ has $n|K|$-paradoxical towers. 
\end{prop}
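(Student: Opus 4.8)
The plan is to lift paradoxical towers from $\bar G:=G/K$ to $G$ along the quotient map $\pi\colon G\to\bar G$, using the $|K|$ available ``layers'' to compensate for the collapsing caused by $K$. Write $q=|K|$, enumerate $K=\{k_1,\dots,k_q\}$, and fix a set-theoretic section $\sigma\colon\bar G\to G$ of $\pi$, so that $G=\bigsqcup_{t=1}^q\sigma(\bar G)k_t$ decomposes $G$ into $q$ ``slices''. The obvious idea---lifting a tower $\bar A\subseteq\bar G$ to its full preimage $\pi^{-1}(\bar A)$---fails immediately: since $\pi^{-1}(\bar A)$ is $K$-invariant, one has $d\,\pi^{-1}(\bar A)=d'\,\pi^{-1}(\bar A)$ whenever $d^{-1}d'\in K$, so the disjointness condition (1) is violated as soon as $D$ meets some $K$-coset in two points. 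This is exactly the phenomenon that forces the multiplicative factor $|K|$.

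Given a finite $D\subseteq G$, I would set $\bar D=\pi(D)$ and apply \autoref{lma:MoreTowers} to $\bar G$ (which has $n$-paradoxical towers) with this $\bar D$ and with $m=q$ layers. This produces sets $\bar A_i^{(j)}\subseteq\bar G$ and elements $\bar g_i^{(j)}\in\bar G$, for $i=1,\dots,n$ and $j=1,\dots,q$, whose $\bar D$-translates are \emph{jointly} pairwise disjoint, and with $\bigcup_{i=1}^n\bar g_i^{(j)}\bar A_i^{(j)}=\bar G$ for each fixed layer $j$. These $nq=n|K|$ index pairs $(i,j)$ will label the towers of $G$. The key construction is to realize each tower as a \emph{graph} over its support which, after applying its covering element, lands in a single slice: putting $g_{i,j}=\sigma(\bar g_i^{(j)})$, I would define
\[
A_{i,j}=g_{i,j}^{-1}\,\sigma\!\big(\bar g_i^{(j)}\bar A_i^{(j)}\big)\,k_j,
\]
so that $\pi$ maps $A_{i,j}$ bijectively onto $\bar A_i^{(j)}$ (one point per fibre) and $g_{i,j}A_{i,j}=\sigma(\bar g_i^{(j)}\bar A_i^{(j)})k_j$ lies entirely in the $j$-th slice.

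With this design, covering is immediate: $\bigcup_{i,j}g_{i,j}A_{i,j}=\bigcup_{j}\sigma\big(\bigcup_i\bar g_i^{(j)}\bar A_i^{(j)}\big)k_j=\bigcup_j\sigma(\bar G)k_j=G$, using the per-layer covering upstairs and that the $k_j$ exhaust $K$. The step I expect to be the crux is the disjointness condition (1), and the point is precisely that the graph structure defeats the $K$-invariance problem above. If $d\cdot p=d'\cdot p'$ with $d,d'\in D$ and $p\in A_{i,j}$, $p'\in A_{i',j'}$, then applying $\pi$ gives $\pi(d)\bar a=\pi(d')\bar a'$ with $\bar a\in\bar A_i^{(j)}$ and $\bar a'\in\bar A_{i'}^{(j')}$; the joint disjointness from \autoref{lma:MoreTowers} forces $\pi(d)=\pi(d')$, $i=i'$, $j=j'$, and $\bar a=\bar a'$. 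Since each tower meets each fibre in a single point, this gives $p=p'$, and cancelling the common element yields $d=d'$. Hence the sets $dA_{i,j}$ are pairwise disjoint, and $G$ admits $n|K|$-paradoxical towers.

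The only genuine subtlety is that the extension $1\to K\to G\to\bar G\to 1$ need not split, so $G$ carries no honest product structure $\bar G\times K$; a naive ``product'' tower $\sigma(\bar A_i)\times\{k_j\}$ would be scrambled by the conjugation cocycle $\sigma(\bar g)\sigma(\bar a)=\sigma(\bar g\bar a)\,c(\bar g,\bar a)$ and would not cover cleanly slice by slice. The role of the section $\sigma$---and of defining $A_{i,j}$ through its \emph{image} $\sigma(\bar g_i^{(j)}\bar A_i^{(j)})k_j$ under $g_{i,j}$, rather than as a product---is exactly to absorb this twisting: it guarantees that the translated tower sits in one slice (making covering trivial) while keeping each tower a graph over $\bar A_i^{(j)}$ (making disjointness reduce to the downstairs statement). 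I therefore do not anticipate difficulty beyond carefully recording these two verifications, with the within-coset separation being the conceptual heart of the argument.
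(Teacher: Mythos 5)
Your proof is correct and follows essentially the same route as the paper: both arguments produce $|K|$ jointly $D$-disjoint layers of towers in $G/K$ (you invoke \autoref{lma:MoreTowers}, while the paper inlines the same translation trick), lift each layer through a section as a graph over its image so that $\pi$ is injective on each tower, and use the elements of $K$ to distribute the $|K|$ covers of the section's image over the fibres of $\pi$. The only cosmetic differences are where the element $k_j$ is placed (inside the tower versus inside the translating element) and that your graph-injectivity argument replaces the paper's explicit conjugation-into-$K$ computation in the disjointness step.
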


\begin{proof}
Denote by $\pi\colon G\to G/K$ the quotient map, let $F\subseteq G$ be a finite subset, and set $D_0=\pi(F)$ and $m=|K|$.
Since $G/K$ is infinite, there exist $t_1,\ldots,t_m\in G/K$ with $t_1=1$
such that $D_0t_j$, for $j=1,\ldots,m$, are pairwise disjoint sets.
Set 
\begin{equation}\tag{4.1}\label{disjtrans}
D=\bigsqcup_{j=1}^m D_0t_j,
\end{equation}
which is a finite subset of $G/K$.
Since $G/K$ admits $n$-paradoxical towers, there are sets $A_1,\dotsc,A_n\subseteq G/K$ and elements $h_1,\dotsc,h_n\in G/K$ such that 
\begin{itemize}
\item[(i)] \label{disjointinG/K} the sets $d 
A_i$, for $d\in D$ and $i=1,\dotsc,n$, are pairwise disjoint, and
\item[(ii)] \label{coverG/K}$G/K=\bigcup_{i=1}^nh_iA_i$.
\end{itemize} 
Fix an arbitrary enumeration $K=\{k_1,\ldots,k_m\}$.
Let $s\colon G/K\to G$ be a 
section for $\pi$. For $i=1,\ldots,n$ and $j=1,\ldots,m$, set 
\begin{equation}\tag{4.2}\label{df:Aijgij}
C_{i,j}=s(t_j {h_i}^{-1})s(h_iA_i) \subseteq G \ \ \mbox{ and } \ \ 
g_{i,j}=k_js(t_j {h_i}^{-1})^{-1}\in G. 
\end{equation}

We claim that the above are paradoxical towers in $G$ for $F$.
To check the first condition in \autoref{df:ParadoxicalTower}, let $d,d'\in F$, let 
$i,i'=1,\ldots,n$ and let 
$j,j'=1,\ldots,m$ satisfy 
\begin{equation}\tag{4.3}\label{nonemptyintersection}
dC_{i,j}\cap d'C_{i',j'}\not=\emptyset.
\end{equation}
Applying $\pi$ in the equation above and using \eqref{df:Aijgij}, 
we obtain 
\[\pi(d)t_jA_i\cap \pi(d')t_{j'}A_{i'}\not=\emptyset.\] 
Note that $\pi(d)t_j$ and $\pi(d')t_{j'}$ belong to $D$.
By condition (i), 
we deduce that $\pi(d)t_j=\pi(d')t_{j'}$ and $i=i'$.
The identity in \eqref{disjtrans} implies 
that $j=j'$ and $\pi(d)=\pi(d')$. In other words, $d^{-1}d'$ belongs to $K$.
Combining this with
\eqref{nonemptyintersection} and \eqref{df:Aijgij}, we get
\[d s(t_j {h_i}^{-1})s(h_iA_i)\cap d's(t_j {h_i}^{-1})s(h_iA_i)\not=\emptyset.\]
and thus 
\[s(h_iA_i) \cap \hspace{-.5cm}\underbrace{s(t_j {h_i}^{-1})^{-1}\overbrace{d^{-1}d'}^{\in K} s(t_j {h_i}^{-1})}_{ \ \ \ \ \ \ \ \ \ \ \ \ \ \ \ \ \ \ \ \ \ \in K, \ 
\mathrm{since}~K~\mathrm{is~normal}}\hspace{-.5cm}s(h_iA_i) \not=\emptyset.\]
Since $s$ is a section, we have $k s(G/K)\cap s(G/K)\not= \emptyset$ 
for some $k\in K$ if and only if $k=1$. It follows that 
$s(t_j {h_i}^{-1})^{-1}d^{-1}d' s(t_j {h_i}^{-1})=1$ and thus $d=d'$, as desired. 

We check the second condition in \autoref{df:ParadoxicalTower}.
By condition (ii), we have $s(G/K)=\bigcup_{i=1}^n s(h_iA_i)$. Using that
$G=\bigsqcup_{j=1}^m k_js(G/K)$ at the last step, we obtain
\[
\bigcup_{i,j=1}^{n,m} g_{i,j}C_{i,j}\stackrel{\eqref{df:Aijgij}}{=}\bigcup_{i,j=1}^{n,m}
k_js(t_j {h_i}^{-1})^{-1}s(t_j {h_i}^{-1}) s(h_iA_i)=
\bigcup_{i,j=1}^{n,m}k_j s(h_iA_i)=G.
\]
This proves that $G$ has $nm$-paradoxical towers, as desired. 
\end{proof}

\begin{prop}\label{lma:ExtParad}
Let $G$ be a group, let $K\leq G$ be a normal subgroup.
Assume that $G/K$ has $n$-paradoxical towers and that $K$ has $m$-paradoxical
towers. Then $G$ has $nm$-paradoxical towers.
\end{prop}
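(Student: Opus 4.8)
The plan is to lift paradoxical towers of $G/K$ and of $K$ to $G$ by working in \emph{right-coset coordinates}. Write $\pi\colon G\to G/K$ for the quotient map and fix a set-theoretic section $s\colon G/K\to G$, so that every $g\in G$ can be written uniquely as $g=k\,s(\bar a)$ with $k\in K$ and $\bar a=\pi(g)$. Given a finite set $D\subseteq G$, I would apply the $n$-paradoxical towers of $G/K$ to $D_0:=\pi(D)$, obtaining $A_1,\dots,A_n\subseteq G/K$ and $h_1,\dots,h_n\in G/K$ with the $\bar d A_i$ (for $\bar d\in D_0$, $i=1,\dots,n$) pairwise disjoint and $G/K=\bigcup_i h_iA_i$; and I would apply the $m$-paradoxical towers of $K$ to $\Delta:=(D^{-1}D)\cap K$ (a symmetric set containing the unit), obtaining $B_1,\dots,B_m\subseteq K$ and $\ell_1,\dots,\ell_m\in K$ with the $\delta B_j$ (for $\delta\in\Delta$, $j=1,\dots,m$) pairwise disjoint and $K=\bigcup_j\ell_jB_j$. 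Fixing lifts $\tilde h_i\in G$ of $h_i$, I would then set
\[ C_{i,j}=B_j\,s(A_i)\subseteq G\qquad\text{and}\qquad g_{i,j}=\tilde h_i\,\ell_j\in G \]
for $i=1,\dots,n$ and $j=1,\dots,m$, giving $nm$ candidate towers.

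For the disjointness in \autoref{df:ParadoxicalTower}(1), suppose $dC_{i,j}\cap d'C_{i',j'}\neq\varnothing$ with $d,d'\in D$; then there are $b\in B_j$, $b'\in B_{j'}$ and $\bar a\in A_i$, $\bar a'\in A_{i'}$ with $d\,b\,s(\bar a)=d'\,b'\,s(\bar a')$. Applying $\pi$ gives $\pi(d)\bar a=\pi(d')\bar a'$, so disjointness of the $G/K$-towers forces $\pi(d)=\pi(d')$, $i=i'$ and $\bar a=\bar a'$. Cancelling the common factor $s(\bar a)$ then yields $db=d'b'$, hence $b'=\kappa_0 b$ with $\kappa_0:=(d')^{-1}d\in\Delta$. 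Thus $\kappa_0 B_j\cap B_{j'}\ni b'$, and disjointness of the $K$-towers forces $\kappa_0=1$ and $j=j'$, i.e. $d=d'$ and $j=j'$; together with $i=i'$ this is exactly the required conclusion.

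For the covering condition \autoref{df:ParadoxicalTower}(2), I would note that $K\,s(A_i)=\pi^{-1}(A_i)$, so that, using $K=\bigcup_j\ell_jB_j$,
\[ \bigcup_{j=1}^m \ell_j C_{i,j}=\Big(\bigcup_{j=1}^m\ell_jB_j\Big)s(A_i)=K\,s(A_i)=\pi^{-1}(A_i). \]
Since $\tilde h_i\,\pi^{-1}(A_i)=\pi^{-1}(h_iA_i)$ and $G/K=\bigcup_i h_iA_i$, taking the union over $i$ gives $\bigcup_{i,j}g_{i,j}C_{i,j}=\bigcup_i\pi^{-1}(h_iA_i)=\pi^{-1}(G/K)=G$, as needed. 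This shows that $G$ has $nm$-paradoxical towers.

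The main obstacle is reconciling the two levels of the extension in the disjointness step. Had I written elements in the \emph{left}-coset form $g=s(\bar a)k$ instead, comparing $dg=d'g'$ would have produced $k'=s(\bar a)^{-1}(d')^{-1}d\,s(\bar a)\,k$, so the element of $K$ controlling disjointness would be the conjugate $s(\bar a)^{-1}\kappa_0\,s(\bar a)$ of $\kappa_0=(d')^{-1}d$; as $\bar a$ ranges over the infinite tower $A_i$ these conjugates fill out an infinite subset of $K$, and no single finite set could be fed into the paradoxical towers of $K$. The decisive point is that in the right-coset coordinates $g=k\,s(\bar a)$ the section factors cancel outright once the $G/K$-towers have forced $\bar a=\bar a'$, leaving the clean relation $b'=\kappa_0 b$ with $\kappa_0\in(D^{-1}D)\cap K$ confined to a fixed finite set. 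This is precisely what makes the matching of the $G/K$-level and the $K$-level possible, and it is the only genuinely delicate part of the argument.
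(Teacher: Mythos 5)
Your proof is correct and follows the same overall strategy as the paper's: lift the towers of $G/K$ and of $K$ to $G$ via a section, and use the quotient map to reduce the disjointness condition first to the $G/K$-level and then to a relation inside $K$. The one substantive difference is how the leftover $K$-element is confined to a fixed finite set. You place the $K$-towers on the far left, writing $C_{i,j}=B_j\,s(A_i)$, so that after cancelling $s(\bar a)$ the relevant element is $(d')^{-1}d\in (D^{-1}D)\cap K$, with no conjugation at all. The paper instead sets $C_{i,j}=s(h_i)^{-1}B_j\,s(h_iA_i)$ and $g_{i,j}=k_j s(h_i)$, and its cancellation produces the conjugate $s(h_i)\,e\,s(h_i)^{-1}$ of an element $e\in F^2\cap K$; since there are only $n$ elements $h_i$, the union $\bigcup_{i}s(h_i)(F^2\cap K)s(h_i)^{-1}$ is still finite (normality of $K$ is invoked to keep it inside $K$), and this is the finite set fed into the towers of $K$. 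Your right-coset normal form makes this step slightly cleaner and avoids the conjugation entirely, and your closing remark correctly diagnoses why the left-coset convention would fail; both arguments are valid and yield the same conclusion.
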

\begin{proof}
Let $\pi\colon G\to G/K$ be the canonical quotient map,
and $s\colon G/K\to G$ be any section for it.
Let $F\subseteq G$ be a finite subset, and set $E_0=F^2\cap K$ and $D=\pi(F)$.
Without loss of generality, we assume that 
$F$ is symmetric and contains the identity of $G$.
Since $G/K$ has $n$-paradoxical towers, there exist subsets
$A_1,\ldots,A_n\subseteq G/K$ and group elements $h_1,\ldots,h_n\in G/K$
such that
\be\item[(a.1)] the sets $d
A_i$, for $d\in D$ and $i=1,\dotsc,n$, are pairwise disjoint,
\item[(a.2)] $G/K=\bigcup_{i=1}^nh_iA_i$.
\ee
Set $E=\bigcup_{i=1}^n s(h_i)E_0 s(h_i)^{-1}$, which by normality 
is a (finite) subset of $K$.
Since $K$ has $m$-paradoxical towers, there exist subsets
$B_1,\ldots,B_m\subseteq K$ and group elements $k_1,\ldots,k_m\in K$
such that
\be\item[(b.1)] the sets $e
B_j$, for $e\in E$ and $j=1,\dotsc,m$, are pairwise disjoint,
\item[(b.2)] $K=\bigcup_{j=1}^m k_jB_j$.
\ee
For $i=1,\ldots,n$ and $j=1,\ldots,m$, set 
\[C_{i,j}= s(h_i)^{-1}B_js(h_iA_i) \ \ \mbox{ and } \ \ g_{i,j}=k_js(h_i).\]
Note that $\pi(C_{i,j})=A_i$ for all $i=1,\ldots,n$ and $j=1,\ldots,m$.

We claim that the above are paradoxical towers in $G$ for $F$.
It is immediate to check that $\bigcup_{i,j=1}^{n,m}g_{i,j}C_{i,j}=G$ using
(a.2) and (b.2). Given
$f,f'\in F$, $i,i'=1,\ldots,n$ and $j,j'=1,\ldots,m$, suppose that
\[fC_{i,j}\cap f'C_{i',j'}\neq \emptyset.\]
Applying $\pi$ gives $\pi(f)A_i\cap \pi(f')A_{i'}\neq \emptyset$, which
by condition (a.1) implies that $i=i'$ and $f^{-1}f'\in K$.
Set $e=f^{-1}f'\in E_0$. Substituting in the equation above, we get
\[
 s(h_i)^{-1}B_js(h_iA_i) \cap es(h_i)^{-1}B_{j'}s(h_iA_i)\neq\emptyset.
\]
Choose $a,a'\in A_i$, $b\in B_j$ and $b'\in B_{j'}$ such that 
\[s(h_i)^{-1}bs(h_ia)= es(h_i)^{-1}b's(h_ia').\]
Applying $\pi$ to the identity above gives $a=a'$, so we get 
\[s(h_i)^{-1}b= es(h_i)^{-1}b',\]
which implies that $B_j\cap (s(h_i)es(h_i)^{-1})B_{j'}\neq \emptyset$
since $F$ contains the identity of $G$.
Since $s(h_i)es(h_i)^{-1}$ belongs to $E$, condition (b.1) implies
that $s(h_i)es(h_i)^{-1}=1$ and $j=j'$. Thus $e=1$ and $f=f'$, as desired.
\end{proof}

\begin{lma}\label{rem:LocalParadoxicalTowers}
Let $n\in\N$, and let $G$ be a group that can be 
expressed as an increasing union $G=\bigcup_{k\in\N} G_k$ of 
groups $G_k$ that admit $n$-paradoxical towers. Then $G$ admits 
$n$-paradoxical towers.
\end{lma}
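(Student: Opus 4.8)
The plan is to reduce the statement for $G$ to the corresponding statement inside a single $G_k$, and then to \emph{spread} the resulting towers across the right cosets of $G_k$ in $G$, exactly as in the passage from $B_j$ to $A_j$ in the proof of \autoref{prop:PropInf}.

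First I would fix a finite subset $D\subseteq G$. Since the union $G=\bigcup_{k\in\N}G_k$ is increasing and $D$ is finite, there is some $k\in\N$ with $D\subseteq G_k$. As $G_k$ admits $n$-paradoxical towers, I apply \autoref{df:ParadoxicalTower} \emph{inside} $G_k$ to the finite set $D$ to obtain subsets $A_1',\ldots,A_n'\subseteq G_k$ and elements $g_1,\ldots,g_n\in G_k$ such that the sets $dA_i'$, for $d\in D$ and $i=1,\ldots,n$, are pairwise disjoint and $G_k=\bigcup_{i=1}^n g_iA_i'$.

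Next I choose a set $S\subseteq G$ of representatives for the right cosets of $G_k$, so that $G=\bigsqcup_{s\in S}G_ks$, and I define $A_i=\bigsqcup_{s\in S}A_i's$ for $i=1,\ldots,n$, keeping the same elements $g_i$. For the covering condition I compute
\[\bigcup_{i=1}^n g_iA_i=\bigsqcup_{s\in S}\Big(\bigcup_{i=1}^n g_iA_i'\Big)s=\bigsqcup_{s\in S}G_ks=G.\]
For the disjointness condition, given $(d,i)\neq(d',i')$ with $d,d'\in D\subseteq G_k$, I expand
\[dA_i\cap d'A_{i'}=\bigsqcup_{s,t\in S}\big(dA_i's\cap d'A_{i'}t\big).\]
Each summand is contained in $G_ks\cap G_kt$ (using that $d,d'\in G_k$ preserve the right cosets), which is empty unless $s=t$; and for $s=t$ the summand equals $(dA_i'\cap d'A_{i'})s=\emptyset$ by the disjointness obtained in $G_k$. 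Hence $A_1,\ldots,A_n$ together with $g_1,\ldots,g_n$ witness $n$-paradoxical towers for $D$ in $G$, and since $D$ was arbitrary, $G$ admits $n$-paradoxical towers.

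The argument is essentially bookkeeping, and the only point requiring care is that disjointness within a subgroup is preserved when the towers are distributed over cosets; this uses only that distinct right cosets are disjoint and that both $D$ and the $A_i'$ lie inside $G_k$. I do not expect a genuine obstacle here. The one subtlety to keep in mind is to use \emph{right} cosets, so that left multiplication by the elements of $D$ (and by the $g_i$) respects the coset decomposition, which is precisely what makes both conditions transparent.
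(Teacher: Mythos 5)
Your proof is correct and follows essentially the same route as the paper: find $k$ with $D\subseteq G_k$, apply the hypothesis inside $G_k$, and spread the resulting towers over the right cosets $G_k\backslash G$ via $A_i=\bigsqcup_{s\in S}A_i's$. The paper leaves the final verification as ``immediate to check,'' whereas you spell it out; the details you give are accurate.
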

\begin{proof}
Let $D\subseteq G$ be a finite subset, and find $k\in\N$ such 
that $D\subseteq G_k$. Find subsets
$B_1,\dotsc,B_n\subseteq G_k$ and $g_1,\dotsc, g_n\in G_k$ such that 
\begin{enumerate}
\item the sets $dB_i$, for $d\in D$ and $i=1,\ldots,n$, are pairwise disjoint, and
\item $\bigcup_{i=1}^{n}g_i B_i=G_k$.
\end{enumerate}

Let $S\subseteq G$ be a set containing exactly one representative 
of each right coset in $G_k\backslash G$, so that
$G=\bigsqcup_{s\in S}G_k s$. For $i=1,\ldots,n$, set  $A_i=\bigsqcup_{s\in S}B_i s$. It is then 
immediate to check that $A_1,\ldots,A_n\subseteq G$ and $g_1,\ldots, g_n\in G$ satisfy
the conditions of \autoref{df:ParadoxicalTower}. 
\end{proof}

The following notion was introduced in
\cite{JolRob_simple_2000} for actions on compact spaces.

\begin{df}\label{df:nfilling}
Let $n\in\mathbb{N}$. An action $G\acton Z$ of a countable group 
on a Hausdorff space $Z$ is said to be \emph{$n$-filling} if for 
any nonempty open sets $U_1,\ldots, U_n\subseteq Z$, there exist 
$g_1,\ldots,g_n\in G$ such that $\bigcup_{j=1}^{n}g_iU_i=Z$.
\end{df}

In the definition above, we do not assume the space $Z$ to be 
compact, or even locally compact. It is not hard to see that if 
$G\acton Z$ is $n$-filling and $Z$ is locally compact, then $Z$ must
in fact be compact. There exist, however, interesting $n$-filling
actions on spaces that are not locally compact;
see \autoref{eg:acylindrical}.

\begin{rem}\label{rem:2fillStrBdryExtrProx}
Actions that are $2$-filling are also called \emph{strong boundary actions}, and their 
$C^*$-algebraic crossed products were studied in \cite{LacSpi_purely_1996}. They have
also been studied under the name \emph{extremely proximal actions} by 
Glasner in \cite{Gla_topological_1974} and are called \emph{extreme boundary actions} in \cite{BIO_simplicity_2020}.
\end{rem}

Recall that a topological space is called \emph{Baire} if the 
conclusion of the Baire category theorem holds: a countable
intersection of open dense subsets is dense. The class of Baire
spaces includes all locally compact Hausdorff spaces as well as
all completely metrizable ones.

\begin{prop}\label{thm:BdryParadTowers}
Let $n\in\N$ and let $G$ be a countable, infinite group. 
Assume that there exists a 
topologically free $n$-filling action of 
$G$ on a Hausdorff Baire space. 
Then $G$ admits $n$-paradoxical towers.
\end{prop}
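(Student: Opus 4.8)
The plan is to transport the dynamics on $Z$ to the group $G$ via an orbit map, so that the $n$-filling property becomes the covering condition (2) of \autoref{df:ParadoxicalTower} while disjointness of translated open sets becomes the disjointness condition (1). First I would fix a finite set $D\subseteq G$ and produce a base point $z_0\in Z$ with trivial stabiliser. Since $Z$ is Hausdorff, each fixed-point set $\mathrm{Fix}(g)=\{z\in Z\colon g\cdot z=z\}$ is closed, and topological freeness says it has empty interior for $g\neq 1$; as $G$ is countable and $Z$ is Baire, the dense open sets $Z\setminus \mathrm{Fix}(g)$ have dense (hence nonempty) intersection, yielding a point $z_0$ with $g\cdot z_0\neq z_0$ for all $g\neq 1$. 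The orbit map $\iota\colon G\to Z$, $\iota(g)=g\cdot z_0$, is then injective. For an open set $U\subseteq Z$ I set $A_U=\{g\in G\colon g\cdot z_0\in U\}$; the key bookkeeping identity is $g_0A_U=A_{g_0U}$ for every $g_0\in G$, obtained by the substitution $h=g_0g$.

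Next comes the crux: choosing nonempty open sets $U_1,\dots,U_n\subseteq Z$ whose $D$-translates $dU_i$ (for $d\in D$, $i=1,\dots,n$) are pairwise disjoint in $Z$. Since $G$ is infinite, I can choose $c_1,\dots,c_n\in G$ so that the sets $Dc_i$ are pairwise disjoint (inductively, picking $c_i$ outside the finite set $\bigcup_{k<i}D^{-1}Dc_k$); then the $|D|\cdot n$ elements $dc_i$ are pairwise distinct, so by injectivity of $\iota$ the points $dc_i\cdot z_0$ are pairwise distinct in $Z$. Using the Hausdorff property I separate these finitely many points by pairwise disjoint open sets $O_{d,i}\ni dc_i\cdot z_0$, and then set $U_i=\bigcap_{d\in D}d^{-1}O_{d,i}$. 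Each $U_i$ is an open neighbourhood of $c_i\cdot z_0$ (a finite intersection) satisfying $dU_i\subseteq O_{d,i}$, so the translates $dU_i$ inherit pairwise disjointness from the $O_{d,i}$.

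Finally I would apply the $n$-filling property (\autoref{df:nfilling}) to the nonempty open sets $U_1,\dots,U_n$ to obtain $g_1,\dots,g_n\in G$ with $\bigcup_{i=1}^n g_iU_i=Z$, and define $A_i=A_{U_i}$. Condition (1) holds because, using $dA_{U_i}=A_{dU_i}$, we get $dA_{U_i}\cap d'A_{U_{i'}}=\{h\colon h\cdot z_0\in dU_i\cap d'U_{i'}\}$, which is empty whenever $(d,i)\neq(d',i')$ since the $dU_i$ are disjoint. Condition (2) holds because $\bigcup_i g_iA_{U_i}=\bigcup_i A_{g_iU_i}=\{h\colon h\cdot z_0\in\bigcup_i g_iU_i\}=\{h\colon h\cdot z_0\in Z\}=G$. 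The hard part is the middle step, namely arranging genuine disjointness of the translates $dU_i$ inside $Z$; it is exactly there that topological freeness (to spread out the finite orbit $\{dc_i\cdot z_0\}$) and the Hausdorff property (to shrink down to disjoint neighbourhoods) are used, after which $n$-filling supplies the covering essentially for free.
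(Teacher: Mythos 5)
Your proposal is correct and follows essentially the same route as the paper's proof: a Baire category argument produces a free point $z_0$, the infiniteness of $G$ spreads out finitely many orbit points so that Hausdorffness yields neighbourhoods $U_i$ with pairwise disjoint $D$-translates, and the $n$-filling property then gives the covering condition after pulling everything back to $G$ via $A_i=\{g\in G\colon g\cdot z_0\in U_i\}$. The only cosmetic difference is that you select group elements $c_i$ with $Dc_i$ pairwise disjoint and construct the $U_i$ explicitly as $\bigcap_{d\in D}d^{-1}O_{d,i}$, whereas the paper selects the corresponding orbit points $z_i$ recursively and merely asserts the existence of such neighbourhoods.
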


\begin{proof}
Let $Z$ be a Baire space and let 
$G\acton Z$ be a topologically free $n$-filling action.
Let $D\subseteq G$ be a finite subset, and assume without loss of generality that $D$ contains the unit $1_G$ of $G$. Since $G\acton Z$ is topologically free, for 
every $g\in G\setminus\{1_G\}$, the open set
$U_g=\{z\in Z\colon  g\cdot z\neq z\}$ is dense in $Z$. 
Since $G$ is countable and $Z$ is Baire, 
the set $Y:=\bigcap_{g\in G\setminus\{1_G\}} U_g$ is dense in $Z$, and in particular nonempty. 
Fix $z_1\in Y$, so that 
$\mathrm{Stab}_G(z_1)=\{1_G\}$. Using that $G$ is infinite, for $i=2,\ldots,n$, 
we can choose $z_i\in Z$
recursively satisfying
\[z_i\in G\cdot z_1\setminus (D^{-1}D\cdot z_1\cup \ldots\cup D^{-1}D\cdot z_{i-1}). \]
Note that $dz_i=d'z_{i'}$ for $d,d'\in D$ and $i,i'=1,\ldots,n$
implies $d=d'$ and $i=i'$.
Since $Z$ is Hausdorff, there exist open neighborhoods $U_1,\ldots,U_n$ of $z_1,\ldots,z_n$, respectively, such that $dU_i$, for $d\in D$ and 
$i=1,\ldots,n$, are pairwise disjoint sets in $Z$. 
Since $G\acton Z$ is $n$-filling, there exist $g_1,\ldots,g_n\in G$ such that $\bigcup_{i=1}^{n} g_iU_i=Z$. For $i=1,\ldots,n$, set
\[A_i= \{g\in G\colon  g\cdot z_1\in U_i\}. \]
One checks that $A_1,\ldots, A_n$ and $g_1,\ldots, g_n$ satisfy the conditions of \autoref{df:ParadoxicalTower}.
\end{proof}

We turn to examples of groups which admit paradoxical towers. 
The first class we will consider is that of 
acylindrically hyperbolic groups, as introduced by Osin 
in~\cite[Definition~1.3]{Osi_acylindrically_2016}; see \autoref{eg:acylindrical}. 
This class includes all
nonamenable hyperbolic groups (in particular, all nonabelian 
free groups), all but finitely many mapping 
class groups, the outer automorphism group 
$\operatorname{Out}(\mathbb{F}_n)$ of $\mathbb{F}_n$, nonelementary 
CAT$(0)$-groups containing a rank-one element, 
and many fundamental groups of hyperbolic 
$3$-manifolds; see \cite[Appendix]{Osi_acylindrically_2016}.

\begin{prop}\label{eg:acylindrical}
Let $G$ be an acylindrically hyperbolic group. 
Then $G$ admits paradoxical towers.
\end{prop}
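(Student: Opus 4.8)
The plan is to apply \autoref{thm:BdryParadTowers}: it suffices to produce, for \emph{some} $n$, a topologically free $n$-filling action of $G$ on a Hausdorff Baire space. The natural candidate is the boundary dynamics coming from the defining acylindrical action, and I expect $n=2$ to work, so that the resulting action is a strong boundary action in the sense of \autoref{rem:2fillStrBdryExtrProx}.

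First I would reduce to the case of trivial finite radical. By Osin's structure theory, an acylindrically hyperbolic group $G$ has a unique maximal finite normal subgroup $K(G)$, and the quotient $\overline{G}=G/K(G)$ is again acylindrically hyperbolic with $K(\overline{G})=\{1\}$. Since $K(G)$ is finite and normal, \autoref{lem:finiteNormalSubgroup} shows that it is enough to prove that $\overline{G}$ admits paradoxical towers; thus I may assume from the start that $K(G)=\{1\}$. In particular $G$ is infinite, as required by \autoref{thm:BdryParadTowers}.

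Next I would construct the action. By definition $G$ acts acylindrically and nonelementarily on a hyperbolic space $S$; let $\Lambda\subseteq\partial S$ be the limit set of this action. Because the action is nonelementary, $\Lambda$ is uncountable and $G$-invariant, the induced action $G\acton\Lambda$ is a minimal convergence action, and every loxodromic element acts with north--south dynamics. I would then verify the two required properties. For \emph{$2$-filling}: by \autoref{rem:2fillStrBdryExtrProx} this is exactly extreme proximality, which for a minimal nonelementary convergence action follows from a ping-pong argument using the north--south dynamics of two independent loxodromic elements; given nonempty open $U_1,U_2$, one pushes the complement of a suitable translate of $U_1$ inside $U_2$. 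For \emph{topological freeness}: here is where the reduction $K(G)=\{1\}$ is used. Acylindricity forces the pointwise stabilizer of any pair of distinct boundary points to be finite, so an element fixing a nonempty open subset of $\Lambda$ would fix uncountably many such pairs and hence lie in a finite normal subgroup; it must therefore be trivial. Thus $\{z\in\Lambda\colon g\cdot z\neq z\}$ is dense for every $g\neq 1$.

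Finally I would check that $\Lambda$ is a Baire space and invoke \autoref{thm:BdryParadTowers} with $n=2$ to conclude that $G$ admits $2$-paradoxical towers, in particular paradoxical towers. The main obstacle I anticipate is precisely the topology of $\Lambda$: since the defining hyperbolic space $S$ need not be proper, $\partial S$ and $\Lambda$ need not be locally compact, and one must confirm complete metrizability (equivalently, the Baire property) for this possibly non-locally-compact limit set. This is exactly the situation for which the non-locally-compact generality of \autoref{thm:BdryParadTowers} was built. A secondary point requiring care is the bookkeeping in the topological-freeness argument, namely turning the statement ``$g$ fixes a nonempty open set'' into membership in a finite normal subgroup, so that the hypothesis $K(G)=\{1\}$ can be applied.
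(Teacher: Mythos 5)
Your plan is the paper's plan: reduce to trivial finite radical via \autoref{lem:finiteNormalSubgroup}, exhibit a topologically free $2$-filling boundary action, and feed it into \autoref{thm:BdryParadTowers}; the $2$-filling argument via north--south dynamics of loxodromics is also the one the paper uses. Two of the points you flag as ``requiring care'' are exactly where the paper does nontrivial work, and your sketches there are not yet proofs. First, the paper invokes Osin's theorem to replace the defining action by a \emph{cobounded} acylindrical action, so that it can work with the full Gromov boundary $\partial Y$ and quote its complete metrizability from Das--Simmons--Urba\'nski; if you insist on the limit set $\Lambda$ of a non-cobounded action you must separately establish that $\Lambda$ is a Baire space, which is not automatic for non-proper $Y$. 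Second, your topological-freeness deduction --- ``$g$ fixes uncountably many pairs, each pair-stabilizer is finite, hence $g$ lies in a finite normal subgroup'' --- is not a valid inference as stated: lying in many finite subgroups does not place an element in a finite \emph{normal} subgroup. The paper sidesteps this by citing Abbott--Dahmani (minimality and topological freeness of $G\acton\partial Y$ when $K(G)=\{1\}$), and you should do the same or reproduce their argument. Likewise, the assertion that $G/K(G)$ is again acylindrically hyperbolic is true but needs the Dahmani--Guirardel--Osin results on hyperbolically embedded subgroups, which the paper spells out. With those citations supplied, your argument closes and gives $2|K(G)|$-paradoxical towers, as in the paper.
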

\begin{proof}
Let us first assume that $G$ has no nontrivial finite normal subgroups. 
By definition, $G$ admits a nonelementary acylindrical action 
by isometries on a (not necessarily proper) Gromov-hyperbolic space $Y$. 
By \cite[Theorem~1.2]{Osi_acylindrically_2016}, we can even assume that the action is cobounded (as we can take the space to be the Cayley graph associated
to a suitable generating set). 
Let $Z$ denote the (not necessarily compact) Gromov boundary $\del Y$ of $Y$.
Since $G$ has no nontrivial finite normal subgroups, \cite[Proposition~4.1]{AbbDah_property_2019} ensures that the induced action $G\acton Z$ 
is minimal and topologically free. 

We claim that $G\acton Z$ is $2$-filling.\footnote{
This argument is inspired by \cite[Example 2.1]{LacSpi_purely_1996}, 
but note that $\partial Y$
is not necessarily compact.}
Let $U,V\subseteq Z$ be nonempty open subsets. Since the action 
$G\acton Y$ is nonelementary, 
there exists a loxodromic element $h\in G$
(see \cite[Theorem~1.2]{Osi_acylindrically_2016}). 
Denote by $h^{-\infty}$ the repelling point of $h$.
By minimality of $G\acton Z$, there is $t\in G$ with $t\cdot h^{-\infty}\in U$.
By definition (see the paragraph before \cite[Theorem~1.1]{Osi_acylindrically_2016}), 
there is $y\in Y$ such that $(h^{-n}\cdot y)_{n\in\N}$ 
converges to $h^{-\infty}$, and $h^{-\infty}$ is
independent of $y$. In particular, $G$ acts on the limit points of loxodromic elements
by conjugation on the group, namely, for the loxodromic element $g=tht^{-1}$ we have 
$t\cdot h^{-\infty}=g^{-\infty}$.

Again by minimality, there is an element $s\in G$ and an open neighborhood $W$ of the attracting point $g^{+\infty}\in Z$ such that $s\cdot W\subseteq V$. Since $g$ is loxodromic, there is $n\in \N$ such that $g^n\cdot (Z\setminus U)\subseteq W$ (see \cite[Lemma 4.3]{Ham_bounded_2008} or the proof of 
\cite[Theorem~2B]{Tuk_convergence_1994}). In particular, we have $sg^n(Z\setminus U)\subseteq V$. 
Thus $U\cup g^{-n}s^{-1}\cdot V=Z$, which proves that $G\acton Z$ is $2$-filling. 

Note that by \cite[Proposition~3.4.18]{DasSimUrb_geometry_2017} $Z=\del Y$ is a completely metrizable space. By \autoref{thm:BdryParadTowers}, $G$ admits $2$-paradoxical towers.

If $G$ is an arbitrary acylindrically hyperbolic group, then $G$ contains a unique maximal finite normal subgroup $K(G)$ by \cite[Theorem~6.14]{DahGuiOsi_hyperbolically_2017}. 
We
claim that $G/K(G)$ is acylindrically hyperbolic. To see this, use 
\cite[Theorem~1.2]{Osi_acylindrically_2016} to find a proper, infinite, hyperbolically
embedded subgroup $H\hookrightarrow_h G$ (see \cite[Definition~2.9]{Osi_acylindrically_2016}).
By \cite[Theorem~6.14]{DahGuiOsi_hyperbolically_2017}, we have $K(G)\subseteq H$. 
By Lemma~8.3 in~\cite{DahGuiOsi_hyperbolically_2017}, 
the map $H/K(G)\to G/K(G)$ induced by $H\hookrightarrow_h G$ is 
a hyperbolic embedding. By \cite[Theorem~1.2]{Osi_acylindrically_2016}, this shows that $G/K(G)$ is acylindrically hyperbolic.

Since $G/K(G)$ clearly contains no nontrivial finite normal subgroups, it follows
from the claim above and the first part of the proof that 
$G/K(G)$ admits $2$-paradoxical towers.
We conclude from \autoref{lem:finiteNormalSubgroup} that 
$G$ admits $2|K(G)|$-paradoxical towers.
\end{proof}

The following is a concrete application of the example above to the 
work of Klisse \cite{Kli_topological_2020}.

\begin{eg}
Let $W$ be a nonamenable finite rank irreducible right-angled Coxeter group. 
Denote by $\del W$ its boundary in the sense of \cite[Definition~3.1]{Kli_topological_2020}.
Then the crossed product $C(\del W)\rtimes W$ is classifiable Kirchberg algebra.
\end{eg}

\begin{proof}
We claim that 
$W$ is acylindrically hyperbolic.
By \cite[Theorem~1.3]{Sis_contracting_2018},
it suffices to show that $W$ has a rank-one isometry and 
acts properly and cocompactly on a proper CAT$(0)$-space.
The fact that a nonamenable (also called nonaffine) Coxeter group  
acts properly and cocompactly on a proper
CAT$(0)$-space is a classical theorem of Moussong, and 
the fact that $W$ contains a rank-one isometry follows from 
\cite[Corollary~4.3 and Proposition~4.5]{CapFuj_rank_2010}. 
This proves that $W$
is acylindrically hyperbolic, and thus 
it admits paradoxical towers by \autoref{eg:acylindrical}.

The action $W\acton \del W$ is amenable by \cite[Theorem~0.2]{Kli_topological_2020}, minimal by \cite[Theorem~3.19]{Kli_topological_2020}, and 
and topologically free by \cite[Lemma~3.25]{Kli_topological_2020}.  
Thus $C(\del W)\rtimes W$ is purely infinite by \autoref{cor:PurInf}.
\end{proof}

Recall that an \emph{HNN-triple} $(G,H,\theta)$ consists of
a group $G$, a subgroup $H$, and an injective group homomorphism $\theta\colon H\to G$.
The \emph{HNN-extension} HNN$(G,H,\theta)$ associated to $(G,H,\theta)$
is the quotient of the free product $G\ast \Z=\langle G, x\rangle$ by the relation
$xh=\theta(h)x$ for all $h\in H$. The HNN-extension $\Gamma=\mathrm{HNN}(G,H,\theta)$ is said to be \emph{faithful} if its natural action on the associated Bass-Serre tree is faithful. Moreover, $\Gamma$ is said to be \emph{ascending} if either $G=H$ or $G=\theta(H)$. 

For the definition of a highly transitive group, we refer the reader to the introduction
of \cite{FMMS_characterization_2021}. 

\begin{prop}\label{prop:HNN}
Every faithful highly transitive non-ascending HNN-ex\-tension has 2-paradoxical towers.
\end{prop}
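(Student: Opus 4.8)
The plan is to exhibit a topologically free $2$-filling action of $\Gamma=\mathrm{HNN}(G,H,\theta)$ on a Hausdorff Baire space and then apply \autoref{thm:BdryParadTowers} with $n=2$. The natural candidate is the action of $\Gamma$ on the boundary $\del T$ of the associated Bass--Serre tree $T$, whose vertices are the cosets $\Gamma/G$ and whose edges are $\Gamma/H$. Since $G$, and hence $\Gamma$, is countable, $T$ is a countable tree, so its boundary $\del T$ (with the cone topology generated by the shadows of edges) is a separable, completely metrizable space, and therefore a Hausdorff Baire space. Note that $\del T$ need not be locally compact, as the indices $[G:H]$ and $[G:\theta(H)]$ may be infinite; this is exactly the generality allowed in \autoref{thm:BdryParadTowers}. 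It thus remains to verify that $\Gamma\acton\del T$ is minimal, $2$-filling, and topologically free.

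The non-ascending hypothesis states that $H\neq G$ and $\theta(H)\neq G$, so every vertex of $T$ has degree $[G:H]+[G:\theta(H)]\geq 4$ and the stable letter $x$ is a loxodromic isometry of $T$. Combined with the fact that $\Gamma$ acts transitively on the vertices of $T$, this shows that the action on $T$ is minimal and of general type, so $\Gamma\acton\del T$ is minimal. The $2$-filling property then follows by the north--south dynamics argument used in \autoref{eg:acylindrical} (going back to \cite[Example~2.1]{LacSpi_purely_1996}): given nonempty open sets $U,V\subseteq\del T$, use minimality to choose a conjugate $g$ of $x$ whose repelling fixed point $g^{-\infty}$ lies in $U$, together with $s\in\Gamma$ and a neighbourhood $W$ of the attracting fixed point $g^{+\infty}$ satisfying $s\cdot W\subseteq V$; since a tree loxodromic contracts the complement of any neighbourhood of its repelling point toward its attracting point, there is $N$ with $g^N(\del T\SM U)\subseteq W$, and hence $U\cup g^{-N}s^{-1}\cdot V=\del T$.

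The heart of the matter is topological freeness of $\Gamma\acton\del T$, and this is where the highly transitive hypothesis is used. Faithfulness of $\Gamma\acton T$ only yields that $\Gamma\acton\del T$ is faithful---an isometry fixing every end of a leafless tree is trivial---which is strictly weaker than topological freeness. The latter asks that no $g\neq 1$ fix a nonempty open subset of $\del T$, equivalently that no nontrivial element of $\Gamma$ fix a half-tree of $T$; such ``branch-fixing'' elements can occur for faithful non-ascending HNN-extensions and are precisely the obstruction to high transitivity. Concretely, I would invoke the characterization of high transitivity for groups acting on trees from \cite{FMMS_characterization_2021}: for a faithful non-ascending HNN-extension, high transitivity of $\Gamma$ is equivalent to the absence of nontrivial elements fixing a half-tree of $T$, that is, to topological freeness of $\Gamma\acton\del T$. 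Granting this translation, the hypotheses deliver topological freeness. I expect the main difficulty to lie precisely here---reconciling the combinatorial high-transitivity criterion of \cite{FMMS_characterization_2021} with the dynamical topological-freeness property required by \autoref{thm:BdryParadTowers}.

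Granting topological freeness, $\Gamma\acton\del T$ is a topologically free $2$-filling action of the countable infinite group $\Gamma$ on a Hausdorff Baire space, so \autoref{thm:BdryParadTowers} applies with $n=2$ and shows that $\Gamma$ admits $2$-paradoxical towers, as desired.
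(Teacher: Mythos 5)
Your proposal is correct and follows essentially the same route as the paper: pass to the boundary of the Bass--Serre tree, establish that the action there is $2$-filling and topologically free (the latter via the high-transitivity characterization of \cite{FMMS_characterization_2021}), and apply \autoref{thm:BdryParadTowers}. The only cosmetic differences are that the paper works with the closure $\overline{\partial T}$ and quotes \cite[Proposition~4.16, Lemma~3.5]{BIO_simplicity_2020} for strong hyperbolicity and the $2$-filling property, whereas you argue the north--south contraction on $\partial T$ by hand.
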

\begin{proof}
Let $(G,H,\theta)$ be a non-ascending HNN-triple with $\Gamma=\mathrm{HNN}(G,H,\theta)$ faithful and
highly transitive.
By \cite[Proposition~4.16]{BIO_simplicity_2020}, the natural action 
of $\Gamma$ on the Bass-Serre tree $T$ associated to $(G,H,\theta)$ is strongly
hyperbolic. Since this action is always minimal (see the comments before
\cite[Proposition~4.16]{BIO_simplicity_2020}), it follows from 
\cite[Lemma~3.5]{BIO_simplicity_2020} that the induced action
$\Gamma\curvearrowright \overline{\partial T}$ is 2-filling
(see \autoref{rem:2fillStrBdryExtrProx}).
By \cite[Theorem~B]{FMMS_characterization_2021}, the action
$\Gamma\curvearrowright \partial T$ is topologically free, and therefore
also $\Gamma\curvearrowright \overline{\partial T}$ is topologically free.
The result thus follows from \autoref{thm:BdryParadTowers}.
\end{proof}

A concrete and relevant class of groups covered by \autoref{prop:HNN} is that
of Baumslag-Solitar groups. These groups are never
acylindrically hyperbolic by \cite[Remark~8.4]{FMMS_characterization_2021},
and are thus not covered by \autoref{eg:acylindrical}.

\begin{eg}\label{eg:BS}
Let $m,n\in\Z$ with $|m|,|n|> 1$ and $|m|\neq |n|$. Then the associated Baumslag-Solitar 
group 
\[BS(m,n)=\langle a,b\colon ab^m=b^na\rangle\]
has 2-paradoxical towers.
\end{eg}
\begin{proof}
We identify $BS(m,n)$ as an HNN-extension as in \cite[Example~4.21]{BIO_simplicity_2020}:
we take $G=\Z=\langle a\rangle $, with $H=m\Z=\langle a^m\rangle$ and $\theta\colon m\Z \to \Z$
determined by $\theta(a^m)=a^n$; we denote this map by $\cdot\frac{n}{m}$. 
Since $|m|,|n|>1$, the HNN-extension is non-ascending.
Moreover, $BS(m,n)=$HNN$(\Z,m\Z,\cdot\frac{n}{m})$ is highly transitive by 
\cite[Proposition~8.8]{FMMS_characterization_2021}, and is faithful by \cite{Mol_91} (see also Remark~(iii) before Proposition~19 in \cite{HarPre_amalgamated_2011}). Thus the claim follows from \autoref{prop:HNN}. 
\end{proof}

A further class of groups we can treat with our methods is that of amalgamated free products. Given groups $A$ and $B$ containing a common
subgroup $C$, for each $k\geq 0$ the subgroup $C_k\subseteq C$ is
defined after \cite[Corollary 2]{HarPre_amalgamated_2011}.

\begin{eg} \label{eg:AmFreeProd}
Let $A$ and $B$ be groups containing a common subgroup $C$. 
Assume that the following conditions hold:
 \be\item $[A:C]>1$ and $[B:C]>2$.
 \item There is $k\geq 1$ such that $C_k=\{1\}$. \ee
Then the amalgamated free product $\Gamma=A\ast_C B$ has 2-paradoxical towers.
In particular, a free product $G\ast H$ of nontrivial groups with $|H|>2$ always has 
2-paradoxical towers.
\end{eg}
\begin{proof}
Let $T$ denote the Bass-Serre tree of $\Gamma$. By \cite[Proposition~19]{HarPre_amalgamated_2011}, the action of $\Gamma$ on $T$ is minimal and strongly hyperbolic. 
Therefore, by \cite[Lemma~3.5]{BIO_simplicity_2020}, the action of $\Gamma$ on $\overline{\partial T}$ is $2$-filling. 
By \cite[Proposition~19]{HarPre_amalgamated_2011} and \cite[Proposition~3.8]{BIO_simplicity_2020}, the action of $\Gamma$ on $\partial T$ is topologically free, which implies that the action of $\Gamma$ on $\overline{\partial T}$ is topologically free as well. The claim follows from \autoref{thm:BdryParadTowers}. 
\end{proof}

There is a generalization of \autoref{prop:HNN} and \autoref{eg:AmFreeProd}
to groups acting on trees, as follows:

\begin{rem}
Let a group $\Gamma$ act on a tree $T$. Assume that the action $\Gamma\curvearrowright T$ is minimal and strongly hyperbolic. Then the action $\Gamma\curvearrowright \overline{\partial T}$ is $2$-filling by \cite[Lemma 3.5]{BIO_simplicity_2020}.
Assume moreover that the fixator subgroup of every half-tree of $T$ is trivial, which is equivalent to topological freeness of the action of $\Gamma$ on $\overline{\partial T}$, by \cite[Proposition 3.8]{BIO_simplicity_2020} and \cite[Remark~2.1]{BIO_simplicity_2020}.
Then $\Gamma$ has 2-paradoxical towers by \autoref{thm:BdryParadTowers}.
\end{rem}

Next, we establish the existence of paradoxical towers in certain
lattices in Lie groups. Note that the following example covers
SL$_n(\Z)$ for $n\geq 3$ (while SL$_2(\Z)$ is covered by 
\autoref{eg:acylindrical}). 

\begin{eg}\label{eg:lattice}
Let $\Gamma$ be a lattice in a real connected semisimple Lie group $G$ without compact factors and with finite center. Then $\Gamma$ admits paradoxical towers. 
\end{eg}
\begin{proof}
Let us first assume that $G$ has trivial center.
Then the action of $\Gamma$ on the Furstenberg boundary $G/P$ of $G$ is topologically free and $n$-filling for some $n\in \N$ by \cite[Proposition 2.5, Remark~2.6]{JolRob_simple_2000} and the proof of \cite[Proposition 3.4]{AD_purely_1997}. Thus $\Gamma$ has paradoxical towers by \autoref{thm:BdryParadTowers}. 

We now treat the general case, so suppose that the center $K$ of $G$ is finite, 
and set $K_\Gamma=\Gamma\cap K$, which is a finite normal subgroup of $\Gamma$.
Then $G/K$ is a real connected semisimple Lie group without compact factors and 
with trivial center, and it is easy to see that $\Gamma/K_\Gamma\subseteq G/K$
is a lattice.
Then $\Gamma/K_\Gamma$ admits paradoxical towers by the paragraph above, 
and hence $\Gamma$ admits paradoxical towers by \autoref{lem:finiteNormalSubgroup}.
\end{proof}

The following, in combination with \autoref{cor:PurInf}, generalizes \cite[Proposition 4.2]{JolRob_simple_2000}. We refer the 
reader to \cite[Sections~3 and 4]{JolRob_simple_2000} and references therein 
for the definitions of buildings of type $\widetilde{A}_2$
and actions on them.

\begin{eg}\label{eg:building}
Let $G$ a group which acts simply transitively in a type rotating manner on the vertices of a building $\Delta$ of type $\widetilde{A}_2$. (Such groups are called \emph{$\widetilde{A}_2$-groups} in \cite{JolRob_simple_2000}.) Then $G$ admits paradoxical towers.
\end{eg}
\begin{proof}
The action of $G$ on the boundary of $\Delta$ is topologically free and $6$-filling by \cite[Theorem 3.8, Proposition 4.1]{JolRob_simple_2000}. Thus $G$ admits paradoxical towers
by \autoref{thm:BdryParadTowers}. 
\end{proof}

For the following example, we refer the reader to the discussion just 
before \cite[Proposition~3.5]{AD_purely_1997}.

\begin{eg}\label{eg:manifold}
Let $G$ is a countable subgroup of isometries of a visibility manifold $X$ with $\operatorname{vol}(X/G)<\infty$. Then $G$ admits paradoxical towers.
\end{eg}
\begin{proof}
It follows from \cite[Proposition 2.5]{JolRob_simple_2000} and \cite[Theorem 2.8, Theorem 2.2]{Ball_axial_1982} (see also \cite[p. 218]{AD_purely_1997}) that the action of $G$ on $\partial X$ is topologically free and $n$-filling for some $n$. Therefore $G$ admits paradoxical towers by \autoref{thm:BdryParadTowers}.
\end{proof}

To conclude, we give examples of nonamenable groups that do not have paradoxical towers. Observe, however, that many of these groups are covered by Theorem~\ref{thm:introCrossedProd}.

\begin{eg}\label{eg:F2Z}
Let $H$ be a nonamenable group and let $K$ be an infinite amenable group. Then $G=H\times K$ is a nonamenable group that does not have paradoxical towers. 
\end{eg}
\begin{proof}
Assume by contradiction that $G$ has $n$-paradoxical towers for some $n\in \N$. Let $D\subseteq K$ be a subset with $|D|\geq n+1$. We canonically identify $D$ with $\{1\}\times D\subseteq G$. Since $G$ has $n$-paradoxical towers, there are subsets $A_1,\dotsc,A_n\subseteq G$ and elements $g_1=(h_1,k_1),\dotsc,g_n=(h_n,k_n)\in G$ such that:
\begin{enumerate}
\item the sets $dA_i$, for $d\in D$ and $j=1,\ldots,n$, are pairwise disjoint;
\item $G=\bigcup_{j=1}^{n}g_jA_j$.
\end{enumerate}

Fix a finitely additive left-invariant probability measure $\mu$ on $K$, and denote by $\pi_K\colon G\to K$ the canonical projection.
Given $h\in H, d\in D$ and $j=1,\ldots,n$, we have
\[\pi_K((\{h\}\times K)\cap dA_j)=d\pi_K((\{h\}\times K)\cap A_j).\]
By condition (1) above, for different $d\in D$ and $j=1,\ldots,n$, 
the above sets are pairwise disjoint. 
Since $\mu$ is left-invariant, we get
\begin{equation*}
\mu\big(\pi_K((\{h\}\times K)\cap A_j)\big)\leq \frac{1}{n+1},
\end{equation*}
for all $h\in H$ and $j=1,\dotsc,n$. Denote by $1_G$ the unit of $G$.
Using condition (2) at the second step, we get
\begin{align*}
1=&\mu(K)\\
=&\mu\Big(\pi_K\Big(\Big(\{1_G\}\times K\Big)\cap\Big(\bigcup_{j=1}^n(h_j,k_j)A_j\Big)\Big)\Big)\\
=&\mu\Big((\pi_K\Big(\bigcup_{j=1}^n(h_j,k_j)\Big((\{h_j^{-1}\}\times K)\cap A_j\Big)\Big)\Big)\\
=&\mu\Big(\bigcup_{j=1}^nk_j\pi_K((\{h_j^{-1}\}\times K)\cap A_j)\Big)\\
\leq&\sum_{j=1}^n\mu\big(\pi_K((\{h_j^{-1}\}\times K)\cap A_j)\big)\\
\leq&\frac{n}{n+1},
\end{align*}
a contradiction.
\end{proof}



\end{document}